\newcolumntype{C}{>{\centering\arraybackslash}X}
\newcolumntype{R}{>{\raggedleft\arraybackslash}X}
\newcolumntype{L}{>{\raggedright\arraybackslash}X}
\let\oldtabularx\tabularx
\renewcommand{\tabularx}{\footnotesize\oldtabularx}
\newtheorem{ex}{Example}
\newtheorem{prop}{Proposition}
\newtheorem{obs}{Observation}
\newtheorem{rem}{Remark}
\newtheorem{thm}{Theorem}
\title{$K$-Adaptability in Two-Stage Mixed-Integer Robust Optimization}
\author[1]{Anirudh Subramanyam}
\author[1]{Chrysanthos E.~Gounaris}
\author[2]{Wolfram Wiesemann}
\affil[ ]{\texttt{asubramanyam@cmu.edu}, \texttt{gounaris@cmu.edu}, \texttt{ww@imperial.ac.uk}}
\affil[1]{\small Department of Chemical Engineering, Carnegie Mellon University, United States}
\affil[2]{\small Imperial College Business School, Imperial College London, United Kingdom}
\begin{document}

\maketitle

\begin{abstract}
We study two-stage robust optimization problems with mixed discrete-continuous decisions in both stages. Despite their broad range of applications, these problems pose two fundamental challenges: \emph{(i)} they constitute infinite-dimensional problems that require a finite-dimensional approximation, and \emph{(ii)} the presence of discrete recourse decisions typically prohibits duality-based solution schemes. We address the first challenge by studying a $K$-adaptability formulation that selects $K$ candidate recourse policies \emph{before} observing the realization of the uncertain parameters and that implements the best of these policies \emph{after} the realization is known. We address the second challenge through a branch-and-bound scheme that enjoys asymptotic convergence in general and finite convergence under specific conditions. We illustrate the performance of our algorithm in numerical experiments involving benchmark data from several application domains.

\noindent \textbf{Keywords:} Robust Optimization, Two-Stage Problems, $K$-Adaptability, Branch-and-Bound.
\end{abstract}

\section{Introduction}

Dynamic decision-making under uncertainty, where actions need to be taken both in anticipation of and in response to the realization of a priori uncertain problem parameters, arguably forms one of the most challenging domains of operations research and optimization theory. Despite intensive research efforts over the past six decades, many uncertainty-affected optimization problems resist solution, and even our understanding of the complexity of these problems remains incomplete.

In the last two decades, robust optimization has emerged as a promising methodology to counter some of the intricacies associated with decision-making under uncertainty. The rich theory on static robust optimization problems, in which all decisions have to be taken before the uncertainty is resolved, is summarized in \cite{BTEGN09:rob_opt, BBC11:roreview, GMT14:roreview}. However, dynamic robust optimization problems, in which some of the decisions can adapt to the observed uncertainties, are still poorly understood.

This paper is concerned with \emph{two-stage robust optimization problems} of the form
\begin{equation}\label{eq:two_stage_ro}
\inf_{\bm{x} \in \mathcal{X}} \; \sup_{\bm{\xi} \in \Xi} \; \inf_{\bm{y} \in \mathcal{Y}}
\left\{ \bm{c}^\top \bm{x} + \bm{d} (\bm{\xi})^\top \bm{y} \, : \, \bm{T} (\bm{\xi}) \bm{x} + \bm{W} (\bm{\xi}) \bm{y} \leq \bm{h} (\bm{\xi}) \right\},
\end{equation}
where $\mathcal{X} \subseteq \mathbb{R}^{N_1}$, $\mathcal{Y} \subseteq \mathbb{R}^{N_2}$ and $\Xi \subseteq \mathbb{R}^{N_p}$ constitute nonempty and bounded mixed-integer linear programming (MILP) representable sets,
$\bm{c} \in \mathbb{R}^{N_1}$, and the functions $\bm{d} : \Xi \mapsto \mathbb{R}^{N_2}$, $\bm{T} : \Xi \mapsto \mathbb{R}^{L \times N_1}$, $\bm{W} : \Xi \mapsto \mathbb{R}^{L \times N_2}$ and $\bm{h} : \Xi \mapsto \mathbb{R}^L$ are affine.
In problem~\eqref{eq:two_stage_ro}, the vector $\bm{x}$ represents the first-stage (or `here-and-now') decisions which are taken before the value of the uncertain parameter vector $\bm{\xi}$ from within the uncertainty set $\Xi$ is observed. The vector $\bm{y}$, on the other hand, denotes the second-stage (or `wait-and-see') decisions that can adapt to the realized value of $\bm{\xi}$. We emphasize that problem~\eqref{eq:two_stage_ro} can have a random recourse, i.e., the recourse matrix $\bm{W}$ may depend on the uncertain parameters $\bm{\xi}$. Moreover, we do not assume a relatively complete recourse; that is, for some first-stage decisions $\bm{x} \in \mathcal{X}$, there can be parameter realizations $\bm{\xi} \in \Xi$ such that there is no feasible second-stage decision $\bm{y}$. Also, we do not assume that the sets $\mathcal{X}$, $\mathcal{Y}$ or $\Xi$ are convex.

\begin{rem}[Uncertain First-Stage Objective Coefficients]\label{rem:uncertain_first_stage_objective_coefficients}
The assumption that $\bm{c}$ is deterministic does not restrict generality. Indeed, problem~\eqref{eq:two_stage_ro} accounts for uncertain first-stage objective coefficients $\bm{c}' : \Xi \mapsto \mathbb{R}^{N_1}$ if we augment the second-stage decisions $\bm{y}$ to $(\bm{y}, \bm{y}')$, replace the second-stage objective coefficients $\bm{d}$ with $(\bm{d}, \bm{c}')$ and impose the constraint that $\bm{y}' = \bm{x}$.
\end{rem}

Even in the special case where $\mathcal{X}$, $\mathcal{Y}$ and $\Xi$ are linear programming (LP) representable, problem~\eqref{eq:two_stage_ro} involves infinitely many decision variables and constraints, and it has been shown to be NP-hard~\cite{Gus02:aro_thesis}.
Nevertheless, problem~\eqref{eq:two_stage_ro} simplifies considerably if the sets $\mathcal{Y}$ and $\Xi$ are LP representable. For this setting, several approximate solution schemes have been proposed that replace the second-stage decisions with \emph{decision rules}, i.e., parametric classes of linear or nonlinear functions of $\bm{\xi}$~\cite{BTGGN04:adjustable, CZ09:extended_affinely_adj, GWK10:generalized_drs, GS10:dro_tractable_approximations, KWG11:primal_dual}. If we further assume that $\bm{d}$, $\bm{T}$ and $\bm{W}$ are deterministic and $\Xi$ is of simple form (e.g., a budget uncertainty set), a number of exact solution schemes based on Benders' decomposition \cite{BLSZZ13:adaptive_ro_security_constarinted_unit_commitment, JZLG10:twostage_robust_power_grid, TTE10:robust_lo_with_recourse, ZWWG13:multistage_robust_unit_commitment} and semi-infinite programming \cite{AP15:decomposition, ZL13:column_and_constraint} have been developed.

Problem~\eqref{eq:two_stage_ro} becomes significantly more challenging if the set $\mathcal{Y}$ is not LP representable. For this setting, conservative MILP approximations have been developed in \cite{dick13:hints, VKR11:decision_rules} by partitioning the uncertainty set $\Xi$ into hyperrectangles and restricting the continuous and integer recourse decisions to affine and constant functions of $\bm{\xi}$ over each hyperrectangle, respectively. These a priori partitioning schemes have been extended to iterative partitioning approaches in \cite{BD15:multistage_robust_mio, PdH15:multistage_adjustable_robust_mio}. Iterative solution approaches based on decision rules have been proposed in \cite{BG13:multistage_adaptive, Angele:BDRs}. However, to the best of our knowledge, none of these approaches has been shown to converge to an optimal solution of problem~\eqref{eq:two_stage_ro}. For the special case where problem~\eqref{eq:two_stage_ro} has a relatively complete recourse, $\bm{d}$, $\bm{T}$ and $\bm{W}$ are deterministic and the optimal value of the second-stage problem is quasi-convex over $\Xi$, the solution scheme of \cite{ZL13:column_and_constraint} has been extended in \cite{ZZ12:exact_milp_recourse} to a nested semi-infinite approach that can solve instances of problem~\eqref{eq:two_stage_ro} with MILP representable sets $\mathcal{Y}$ and $\Xi$ to optimality in finite time.

Instead of solving problem~\eqref{eq:two_stage_ro} directly, we study its \emph{$K$-adaptability problem}
\begin{equation}\label{eq:k_adaptability}
\inf_{\substack{\bm{x} \in \mathcal{X}, \\ \bm{y} \in \mathcal{Y}^K}} \; \sup_{\bm{\xi} \in \Xi} \; \inf_{k \in \mathcal{K}}
\left\{ \bm{c}^\top \bm{x} + \bm{d} (\bm{\xi})^\top \bm{y}_k \, : \, \bm{T} (\bm{\xi}) \bm{x} + \bm{W} (\bm{\xi}) \bm{y}_k \leq \bm{h} (\bm{\xi}) \right\},
\end{equation}
where $\mathcal{Y}^K = \bigtimes_{k=1}^K \mathcal{Y}$ 
and $\mathcal{K} = \{ 1, \ldots, K \}$. Problem~\eqref{eq:k_adaptability} determines $K$ non-adjustable second-stage policies $\bm{y}_1, \ldots, \bm{y}_K$ here-and-now and subsequently selects the best of these policies in response to the observed value of $\bm{\xi}$. If all policies are infeasible for some realization $\bm{\xi} \in \Xi$, then the solution $(\bm{x}, \bm{y})$ attains the objective value $+ \infty$. By construction, the $K$-adaptability problem~\eqref{eq:k_adaptability} bounds the two-stage robust optimization problem~\eqref{eq:two_stage_ro} from above. 

Our interest in problem~\eqref{eq:k_adaptability} is motivated by two observations. Firstly, problem~\eqref{eq:k_adaptability} has been shown to be a remarkably good approximation of problem~\eqref{eq:two_stage_ro}, both in theory and in numerical experiments~\cite{BC10:finite_adaptability, HKW15:rip}. Secondly, and perhaps more importantly, the $K$-adaptability problem conforms well with human decision-making, which tends to address uncertainty by developing a small number of contingency plans, rather than devising the optimal response for every possible future state of the world. For instance, practitioners may prefer a limited number of contingency plans to full flexibility in the second stage for operational (e.g., in production planning or logistics) or organizational (e.g., in emergency response planning) reasons.

The $K$-adaptability problem was first studied in~\cite{BC10:finite_adaptability}, where the authors reformulate the $2$-adaptability problem as a finite-dimensional bilinear program and solve it heuristically. The authors also show that the $2$-adaptability problem is NP-hard even if $\bm{d}$, $\bm{T}$ and $\bm{W}$ are deterministic, and they develop necessary conditions for the $K$-adaptability problem~\eqref{eq:k_adaptability} to outperform the static robust problem (where all decisions are taken here-and-now).
The relationship between the $K$-adaptability problem~\eqref{eq:k_adaptability} and static robust optimization is further explored in~\cite{BGS11:finite_adaptability} for the special case where $\bm{T}$ and $\bm{W}$ are deterministic. The authors show that the gaps between both problems and the two-stage robust optimization problem~\eqref{eq:two_stage_ro} are intimately related to geometric properties of the uncertainty set $\Xi$. Finite-dimensional MILP reformulations for problem~\eqref{eq:k_adaptability} are developed in~\cite{HKW15:rip} under the additional assumption that both the here-and-now decisions $\bm{x}$ and the wait-and-see decisions $\bm{y}$ are binary. The authors show that both the size of the reformulations as well as their gaps to the two-stage robust optimization problem~\eqref{eq:two_stage_ro} depend on whether the uncertainty only affects the objective coefficients $\bm{d}$, or whether the constraint coefficients $\bm{T}$, $\bm{W}$ and $\bm{h}$ are uncertain as well. Finally, it is shown in \cite{BK16:min_max_max, BK16:min_max_max_MP} that for polynomial time solvable deterministic combinatorial optimization problems, the associated instances of problem~\eqref{eq:k_adaptability} without first-stage decisions $\bm{x}$ can also be solved in polynomial time if all of the following conditions hold: \emph{(i)} $\Xi$ is convex, \emph{(ii)} only the objective coefficients $\bm{d}$ are uncertain, and \emph{(iii)} $K > N_2$ policies are sought. This result has been extended to discrete uncertainty sets in~\cite{BK16:min_max_max_discrete}, in which case pseudo-polynomial solution algorithms can be developed.

In this paper, we expand the literature on the $K$-adaptability problem in two ways. From an \emph{analytical viewpoint}, we compare the two-stage robust optimization problem~\eqref{eq:two_stage_ro} with the $K$-adaptability problem~\eqref{eq:k_adaptability} in terms of their continuity, convexity and tractability. We also investigate when the approximation offered by the $K$-adaptability problem is tight, and under which conditions the two-stage robust optimization and $K$-adaptability problems reduce to single-stage problems.
From an \emph{algorithmic viewpoint}, we develop a branch-and-bound scheme for the $K$-adaptability problem that combines ideas from semi-infinite and disjunctive programming. We establish conditions for its asymptotic and finite time convergence; we show how it can be refined and integrated into state-of-the-art MILP solvers; and, we present a heuristic variant that can address large-scale instances.
In contrast to existing approaches, our algorithm can handle mixed continuous and discrete decisions in both stages as well as discrete uncertainty, and allows for modeling continuous second-stage decisions via a novel class of highly flexible piecewise affine decision rules.
Extensive numerical experiments on benchmark data from various application domains indicate that our algorithm is highly competitive with state-of-the-art solution schemes for problems~\eqref{eq:two_stage_ro} and~\eqref{eq:k_adaptability}.

The remainder of this paper develops as follows. Section~\ref{sec:prob_analysis} analyzes the geometry and tractability of problems~\eqref{eq:two_stage_ro} and~\eqref{eq:k_adaptability}, and it proposes a novel class of piecewise affine decision rules that can be modeled as an instance of problem~\eqref{eq:k_adaptability} with continuous second-stage decisions. Section~\ref{sec:bab} develops a branch-and-bound algorithm for the $K$-adaptability problem and analyzes its convergence. We present numerical results in Section~\ref{sec:num_results}, and we offer concluding remarks in Section~\ref{sec:conclusions}.

\textbf{Notation.} Vectors and matrices are printed in bold lowercase and uppercase letters, respectively, while scalars are printed in regular font. We use $\mathbf{e}_k$ to denote the $k^\text{th}$ unit basis vector and $\mathbf{e}$ to denote the vector whose components are all ones, respectively; their dimensions will be clear from the context. The $i^\text{th}$ row vector of a matrix $\bm{A}$ is denoted by $\bm{a}_i^\top$. For a logical expression $\mathcal{E}$, we define $\mathbb{I}[\mathcal{E}]$ as the indicator function which takes a value of $1$ is $\mathcal{E}$ is true and $0$ otherwise.

\section{Problem Analysis}\label{sec:prob_analysis}
In this section, we analyze the geometry and tractability of the two-stage robust optimization problem~\eqref{eq:two_stage_ro} and its associated $K$-adaptability problem~\eqref{eq:k_adaptability}.
Specifically, we characterize the continuity, convexity and tractability of both problems, as well as their relationship to the static robust optimization problem where all decisions are taken here-and-now. We also show how the $K$-adaptability problem with continuous second-stage decisions enables us to approximate the two-stage robust optimization problem~\eqref{eq:two_stage_ro} through highly flexible piecewise affine decision rules.

\begin{table}[!tb]
\begin{center}
\begin{footnotesize}
\begin{tabular}{c|c|c|c|c}
\multicolumn{2}{c}{} & \multicolumn{1}{c}{\textbf{Continuity}} & \multicolumn{1}{c}{\textbf{Convexity}} & \multicolumn{1}{c}{\textbf{Tractability}} \\ \hline
\textbf{First-stage} & problem~\eqref{eq:two_stage_ro} & if feasible & if $\mathcal{X}$, $\mathcal{Y}$ convex & if $\mathcal{X}$, $\mathcal{Y}$, $\Xi$ convex and OBJ \\
\textbf{problem} & problem~\eqref{eq:k_adaptability} & if feasible & typically not & if $\mathcal{X}$, $\mathcal{Y}$, $\Xi$ convex and OBJ \\ \hline
\textbf{Evaluation} & problem~\eqref{eq:two_stage_ro} & if OBJ & if $\Xi$ convex and OBJ & if $\Xi$, $\mathcal{Y}$ convex and OBJ \\
\textbf{problem} & problem~\eqref{eq:k_adaptability} & if OBJ or CON & if $\Xi$ convex and OBJ & if $\Xi$ convex and OBJ \\ \hline
\textbf{Second-stage} & problem~\eqref{eq:two_stage_ro} & if feasible & if $\mathcal{Y}$ convex & if $\mathcal{Y}$ convex \\
\textbf{problem} & problem~\eqref{eq:k_adaptability} &  if feasible & always & always \\ \hline \hline
\multicolumn{2}{c}{} & \multicolumn{1}{c}{Propositions \ref{prop:continuity_two_stage_ro}, \ref{prop:continuity_k_adaptability}} & \multicolumn{1}{c}{Propositions \ref{prop:convexity_two_stage_ro}, \ref{prop:convexity_k_adaptability}} & \multicolumn{1}{c}{Propositions \ref{prop:tractability_two_stage_ro}, \ref{prop:tractability_k_adaptability}}
\end{tabular}

~\\[3mm]

\begin{tabular}{cc}
\raisebox{2.7mm}{
\begin{minipage}[t]{8.5cm}
\begin{tabular}{c|c}
\multicolumn{1}{c}{} & \textbf{Reduction to static problem} \\ \hline
problem~\eqref{eq:two_stage_ro} & if $\Xi$ convex and OBJ \\
problem~\eqref{eq:k_adaptability} & if $\Xi$ convex, OBJ and $K > \min \{ N_2, N_p \}$ \\ \hline \hline
\multicolumn{1}{c}{} & \multicolumn{1}{c}{Propositions \ref{prop:static_two_stage_ro}, \ref{prop:static_k_adaptability}}
\end{tabular}
\end{minipage}}
&
\begin{minipage}[t]{6cm}
\begin{tabular}{c}
\textbf{Optimality of problem~\eqref{eq:k_adaptability}} \\ \hline
if $\mathcal{Y}$, $\Xi$ convex and OBJ\\
if $\Xi$ convex, OBJ and $K > \min \{ N_2, N_p \}$ \\
if $|\mathcal{Y}|$ finite and $K \geq |\mathcal{Y}|$ \\ \hline \hline
Proposition \ref{prop:optimality_k_adaptability}
\end{tabular}
\end{minipage}
\end{tabular}
\end{footnotesize}
\caption{Summary of theoretical results. Here, ``OBJ'' refers to instances where only the objective coefficients $\bm{d}$ are uncertain, while $\bm{T}$, $\bm{W}$ and $\bm{h}$ are constant. Similarly, ``CON'' refers to instances where only the constraint coefficients $\bm{T}$, $\bm{W}$ and right-hand sides $\bm{h}$ are uncertain, while $\bm{d}$ is constant.}
\label{table:prob_analysis_summary}
\end{center}
\end{table}

Table~\ref{table:prob_analysis_summary} summarizes our theoretical results. 
For the sake of brevity, we present the formal statements of these results and their proofs as supplementary material at the end of the paper.
In the table, the \emph{first-stage problem} refers to the overall problems~\eqref{eq:two_stage_ro} and~\eqref{eq:k_adaptability}, the \emph{evaluation problem} refers to the maximization over $\bm{\xi} \in \Xi$ for a fixed first-stage decision, and the \emph{second-stage problem} refers to the inner minimization over $\bm{y} \in \mathcal{Y}$ or $k \in \mathcal{K}$ for a fixed first-stage decision and a fixed realization of the uncertain problem parameters. The table reveals that despite significant differences in their formulations, the problems~\eqref{eq:two_stage_ro} and~\eqref{eq:k_adaptability} behave very similarly. The most significant difference is caused by the replacement of the optimization over the second-stage decisions $\bm{y} \in \mathcal{Y}$ in problem~\eqref{eq:two_stage_ro} with the selection of a candidate policy $k \in \mathcal{K}$ in problem~\eqref{eq:k_adaptability}. This ensures that the second-stage problem in~\eqref{eq:k_adaptability} is always continuous, convex and tractable, whereas the first-stage problem in~\eqref{eq:k_adaptability} fails to be convex even if $\mathcal{X}$ and $\mathcal{Y}$ are convex. Moreover, in contrast to problem~\eqref{eq:two_stage_ro}, the evaluation problem in~\eqref{eq:k_adaptability} remains continuous as long as either the objective function or the constraints are unaffected by uncertainty. For general problem instances, however, neither of the two evaluation problems is continuous. As we will see in Section~\ref{sec:bab_convergence}, this directly impacts the convergence of our branch-and-bound algorithm, which only takes place asymptotically in general. Note that the convexity of the problems~\eqref{eq:two_stage_ro} and~\eqref{eq:k_adaptability} does not depend on the shape of the uncertainty set $\Xi$.

\subsection{Incorporating Decision Rules in the $K$-Adaptability Problem}\label{sec:prob_analysis_affine_decision_rules}

Although the $K$-adaptability problem~\eqref{eq:k_adaptability} selects the best candidate policy $\bm{y}_k$ in response to the observed parameter realization $\bm{\xi} \in \Xi$, the policies $\bm{y}_1, \ldots, \bm{y}_K$---once selected in the first stage---no longer depend on $\bm{\xi}$. This lack of dependence on the uncertain problem parameters can lead to overly conservative approximations of the two-stage robust optimization problem~\eqref{eq:two_stage_ro} when the second-stage decisions are continuous. In this section, we show how the $K$-adaptability problem~\eqref{eq:k_adaptability} can be used to generalize affine decision rules, which are commonly used to approximate continuous instances of the two-stage robust optimization problem~\eqref{eq:two_stage_ro}.
We note that existing schemes, such as~\cite{BK16:min_max_max_MP,HKW15:rip}, cannot be used for this purpose as they require the wait-and-see decisions $\bm{y}$ to be binary.

Throughout this section, we assume that problem~\eqref{eq:two_stage_ro} has purely continuous second-stage decisions (that is, $\mathcal{Y}$ is LP representable), a deterministic objective function (that is, $\bm{d} (\bm{\xi}) = \bm{d}$ for all $\bm{\xi} \in \Xi$) and fixed recourse (that is, $\bm{W} (\bm{\xi}) = \bm{W}$ for all $\bm{\xi} \in \Xi$). The assumption of continuous second-stage decisions allows us to assume, without loss of generality, that $\mathcal{Y} = \mathbb{R}^{N_2}$ as any potential restrictions can be absorbed in the second-stage constraints.

The affine decision rule approximation to the two-stage robust optimization problem~\eqref{eq:two_stage_ro} is
\begin{equation*}
\inf_{\substack{\bm{x} \in \mathcal{X}, \\ \bm{y} : \Xi \overset{1}{\mapsto} \mathbb{R}^{N_2}}} \; \left\{ \sup_{\bm{\xi} \in \Xi}
\left\{ \bm{c}^\top \bm{x} + \bm{d}^\top \bm{y} (\bm{\xi}) \right\} \, : \, \bm{T} (\bm{\xi}) \bm{x} + \bm{W} \bm{y} (\bm{\xi}) \leq \bm{h} (\bm{\xi}) \;\;\; \forall \bm{\xi} \in \Xi \right\},
\end{equation*}
where $\bm{y} : \Xi \overset{1}{\mapsto} \mathbb{R}^{N_2}$ indicates that $\bm{y} (\bm{\xi}) = \bm{y}^{0} + \bm{Y} \bm{\xi}$ for some $\bm{y}^{0} \in \mathbb{R}^{N_2}$ and $\bm{Y} \in \mathbb{R}^{N_2 \times N_p}$, see~\cite{BTGGN04:adjustable}. This problem provides a conservative approximation to the two-stage robust optimization problem~\eqref{eq:two_stage_ro} since we replace the space of all (possibly non-convex and discontinuous) second-stage policies $\bm{y} : \Xi \mapsto \mathbb{R}^{N_2}$ with the subspace of all affine second-stage policies $\bm{y} : \Xi \overset{1}{\mapsto} \mathbb{R}^{N_2}$. In a similar spirit, we define the subspace of all piecewise affine decision rules $\bm{y} : \Xi \overset{K}{\mapsto} \mathbb{R}^{N_2}$ with $K$ pieces as
\begin{equation*}
\bm{y} : \Xi \overset{K}{\mapsto} \mathbb{R}^{N_2}
\quad \Longleftrightarrow \quad
\left[
\begin{array}{l}
\exists (\bm{y}_k^0, \bm{Y}_k) \in \mathbb{R}^{N_2} \times \mathbb{R}^{N_2 \times N_p}, \, k = 1, \ldots, K, \text{ such that} \\
\forall \bm{\xi} \in \Xi, \; \exists k \in \{ 1, \ldots, K \} \, : \, \bm{y} (\bm{\xi}) = \bm{y}_k^0 + \bm{Y}_k \bm{\xi}
\end{array}
\right].
\end{equation*}
Note that our earlier definition of $\bm{y} : \Xi \overset{1}{\mapsto} \mathbb{R}^{N_2}$ is identical to our definition of $\bm{y} : \Xi \overset{K}{\mapsto} \mathbb{R}^{N_2}$ if $K = 1$. For $K > 1$, the decision rules $\bm{y} : \Xi \overset{K}{\mapsto} \mathbb{R}^{N_2}$ may be non-convex and discontinuous, and the regions where $\bm{y}$ is affine may be non-closed and non-convex. 
We highlight that the points of nonlinearity are determined by the optimization problem. This is in contrast to many existing solution schemes for piecewise affine decision rules, such as \cite{Angele:BDRs, CZ09:extended_affinely_adj, GWK10:generalized_drs, GS10:dro_tractable_approximations}, where these points are specified ad hoc by the decision-maker.

\begin{obs}
The piecewise affine decision rule problem with fixed recourse
\begin{equation}\label{eq:piecewise_affine_decision_rule}
\inf_{\substack{\bm{x} \in \mathcal{X}, \\ \bm{y} : \Xi \overset{K}{\mapsto} \mathbb{R}^{N_2}}} \; \left\{ \sup_{\bm{\xi} \in \Xi}
\left\{ \bm{c}^\top \bm{x} + \bm{d}^\top \bm{y} (\bm{\xi}) \right\} \, : \, \bm{T} (\bm{\xi}) \bm{x} + \bm{W} \bm{y} (\bm{\xi}) \leq \bm{h} (\bm{\xi})  \;\;\; \forall \bm{\xi} \in \Xi \right\}
\end{equation}
is equivalent to the $K$-adaptability problem with random recourse
\begin{equation}\label{eq:piecewise_affine_k_adaptability}
\inf_{\substack{\bm{x} \in \mathcal{X}, \\ (\bm{y}^0, \bm{Y}, \bm{z}) \in \hat{\mathcal{Y}}^K}} \; \sup_{\bm{\xi} \in \Xi} \; \inf_{k \in \mathcal{K}}
\left\{ \bm{c}^\top \bm{x} + \bm{d}^\top \bm{y}_k^0 + \bm{\hat{d}} (\bm{\xi})^\top \bm{z}_{k1} \, : \, \bm{T} (\bm{\xi}) \bm{x} + \bm{W} \bm{y}_k^0 + \bm{\hat{W}} (\bm{\xi}) \bm{z}_{k2} \leq \bm{h} (\bm{\xi}) \right\},
\end{equation}
where $\hat{\mathcal{Y}} = \Big\{(\bm{y}^0, \bm{Y}, \bm{z}) \in \mathbb{R}^{N_2} \times \mathbb{R}^{N_2 \times N_p} \times (\mathbb{R}^{N_p} \times \mathbb{R}^{N_p L}) \,:\, \bm{z} = (\bm{z}_{1}, \bm{z}_{2}) \text{ with } \bm{z}_1 = \bm{Y}^\top \bm{d} \text{ and } \bm{z}_2 = [\bm{w}_1^\top \bm{Y} \; \ldots \; \bm{w}_L^\top \bm{Y} ]^\top \Big\}$, $\bm{\hat{d}} (\bm{\xi}) = \bm{\xi}$ and $\bm{\hat{W}} (\bm{\xi}) = \text{\emph{diag}} \big( \bm{\xi}^\top, \, \ldots, \, \bm{\xi}^\top \big) \in \mathbb{R}^{L \times N_p L}$.
\end{obs}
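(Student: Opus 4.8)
The plan is to establish the equivalence by fixing the first-stage decision $\bm{x} \in \mathcal{X}$ together with the $K$ affine pieces $(\bm{y}_k^0, \bm{Y}_k)$, $k \in \mathcal{K}$, and showing that the two formulations then agree objective-term-by-objective-term and constraint-by-constraint; the remaining work is to reconcile the ``selection of a piece'' in~\eqref{eq:piecewise_affine_decision_rule} with the ``selection of a policy'' in~\eqref{eq:piecewise_affine_k_adaptability}. First I would substitute the $k$-th affine piece $\bm{y}(\bm{\xi}) = \bm{y}_k^0 + \bm{Y}_k \bm{\xi}$ into the objective of~\eqref{eq:piecewise_affine_decision_rule}, obtaining $\bm{c}^\top \bm{x} + \bm{d}^\top \bm{y}_k^0 + (\bm{Y}_k^\top \bm{d})^\top \bm{\xi}$, which coincides with the $k$-th objective in~\eqref{eq:piecewise_affine_k_adaptability} precisely because $\bm{\hat{d}}(\bm{\xi}) = \bm{\xi}$ and $\bm{z}_{k1} = \bm{Y}_k^\top \bm{d}$ as enforced by $\hat{\mathcal{Y}}$. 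For the constraints I would verify the analogous identity $\bm{W} \bm{Y}_k \bm{\xi} = \bm{\hat{W}}(\bm{\xi}) \bm{z}_{k2}$: reading off row $i$, the block-diagonal structure $\bm{\hat{W}}(\bm{\xi}) = \mathrm{diag}(\bm{\xi}^\top, \ldots, \bm{\xi}^\top)$ multiplies $\bm{\xi}^\top$ against the $i$-th block $(\bm{w}_i^\top \bm{Y}_k)^\top$ of $\bm{z}_{k2}$, yielding the scalar $\bm{\xi}^\top \bm{Y}_k^\top \bm{w}_i = \bm{w}_i^\top \bm{Y}_k \bm{\xi}$, which is exactly the $i$-th component of $\bm{W} \bm{Y}_k \bm{\xi}$. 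Hence the conditions $\bm{T}(\bm{\xi})\bm{x} + \bm{W}\bm{y}(\bm{\xi}) \le \bm{h}(\bm{\xi})$ and $\bm{T}(\bm{\xi})\bm{x} + \bm{W}\bm{y}_k^0 + \bm{\hat{W}}(\bm{\xi})\bm{z}_{k2} \le \bm{h}(\bm{\xi})$ describe the same feasibility requirement for piece $k$ at realization $\bm{\xi}$.

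With the per-piece correspondence in hand, the conceptual step is to show that, for fixed $\bm{x}$ and fixed pieces, the two worst-case values coincide. In~\eqref{eq:piecewise_affine_decision_rule} the decision maker commits to a rule $\bm{y}$, i.e.\ to an assignment $\bm{\xi} \mapsto k(\bm{\xi})$ that selects, for each realization, one of the $K$ pieces; feasibility of $\bm{y}$ requires that the selected piece $k(\bm{\xi})$ satisfy the constraint at every $\bm{\xi} \in \Xi$, and the objective is $\sup_{\bm{\xi} \in \Xi}$ of the selected piece's cost. Since the worst-case objective separates across realizations and the assignment may be chosen freely at each $\bm{\xi}$, the infimum over all admissible rules is attained by selecting, pointwise in $\bm{\xi}$, a feasible piece of minimal cost, with the value $+\infty$ whenever no piece is feasible at some $\bm{\xi}$. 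This pointwise-optimal selection is exactly $\sup_{\bm{\xi} \in \Xi} \inf_{k \in \mathcal{K}} \{ \,\cdot\, \}$ together with the infeasibility-to-$+\infty$ convention used in~\eqref{eq:piecewise_affine_k_adaptability}, so the inner values of the two problems agree for every fixed $\bm{x}$ and every fixed choice of pieces.

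Finally I would take the infimum over $\bm{x} \in \mathcal{X}$ and over the pieces. In~\eqref{eq:piecewise_affine_k_adaptability} each piece is encoded through a point of $\hat{\mathcal{Y}}$, which is precisely the graph of the linear map sending $\bm{Y}$ to $\bm{z} = (\bm{z}_1, \bm{z}_2) = (\bm{Y}^\top \bm{d}, \, [\bm{w}_1^\top \bm{Y}\,\cdots\,\bm{w}_L^\top \bm{Y}]^\top)$; since the objective and constraints depend on a piece only through $(\bm{y}_k^0, \bm{z}_k)$ and $\bm{z}_k$ is slaved to $\bm{Y}_k$, the tuple $(\bm{y}^0, \bm{Y}, \bm{z}) \in \hat{\mathcal{Y}}^K$ ranges bijectively over all $K$-tuples of affine pieces $(\bm{y}_k^0, \bm{Y}_k)$. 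Combining this with the two identities and the selection argument shows that~\eqref{eq:piecewise_affine_decision_rule} and~\eqref{eq:piecewise_affine_k_adaptability} share the same feasible first-stage/piece configurations and the same objective value on each of them, hence the same optimal value. The main obstacle I anticipate is not the algebra of Step~1 but making the selection argument of Step~2 fully rigorous, in particular justifying that the worst case over $\bm{\xi}$ can be minimized by an independent pointwise choice of piece, and checking that the two infeasibility conventions (an infeasible rule in~\eqref{eq:piecewise_affine_decision_rule} versus an objective value of $+\infty$ in~\eqref{eq:piecewise_affine_k_adaptability}) match exactly.
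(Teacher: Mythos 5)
Your proof is correct and takes essentially the same route as the paper's: the pointwise best-feasible-piece selection $k(\bm{\xi})$ (the paper's $\arg\min$ construction) is what realizes the $\sup$-$\inf$ value, and the lifting $\bm{z} = (\bm{Y}^\top \bm{d}, [\bm{w}_1^\top \bm{Y} \, \cdots \, \bm{w}_L^\top \bm{Y}]^\top)$ encoded in $\hat{\mathcal{Y}}$ identifies candidate policies with affine pieces. Your explicit row-wise verification of $\bm{\hat{W}}(\bm{\xi})\bm{z}_{k2} = \bm{W}\bm{Y}_k\bm{\xi}$, the inf/sup interchange via pointwise selection, and the matching of the two infeasibility conventions simply spell out steps that the paper's two-way solution correspondence leaves implicit.
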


\begin{proof}
Problem~\eqref{eq:piecewise_affine_k_adaptability} is infeasible if and only if for every $\bm{x} \in \mathcal{X}$ and $(\bm{y}^0, \bm{Y}, \bm{z}) \in \hat{\mathcal{Y}}^K$ there is a $\bm{\xi} \in \Xi$ such that $\bm{T} (\bm{\xi}) \bm{x} + \bm{W} \bm{y}_k^0 + \bm{\hat{W}} (\bm{\xi}) \bm{z}_{k2} \not\leq \bm{h} (\bm{\xi})$ for all $k = 1,\ldots, K$, which in turn is the case if and only if for every $\bm{x} \in \mathcal{X}$ and $(\bm{y}^0_k, \bm{Y}_k) \in \mathbb{R}^{N_2} \times \mathbb{R}^{N_2 \times N_p}$, $k = 1,\ldots, K$ there is a $\bm{\xi} \in \Xi$ such that $\bm{T} (\bm{\xi}) \bm{x} + \bm{W} \bm{y}_k^0 + \bm{W} \bm{Y}_k \bm{\xi} \not\leq \bm{h} (\bm{\xi})$ for all $k = 1,\ldots, K$; that is, if and only if problem~\eqref{eq:piecewise_affine_decision_rule} is infeasible. We thus assume that both~\eqref{eq:piecewise_affine_decision_rule} and~\eqref{eq:piecewise_affine_k_adaptability} are feasible. In this case, we verify that every feasible solution $(\bm{x}, \bm{y}^0, \bm{Y}, \bm{z})$ to problem~\eqref{eq:piecewise_affine_k_adaptability} gives rise to a feasible solution $(\bm{x}, \bm{y})$, where $\bm{y} (\bm{\xi}) = \bm{y}_{k(\bm{\xi})}^0 + \bm{Y}_{k(\bm{\xi})} \bm{\xi}$ and $k(\bm{\xi})$ is any element of $\mathop{\arg\min}\limits_{k \in \mathcal{K}} \Big\{ \bm{c}^\top \bm{x} + \bm{d}^\top \bm{y}_k^0 + \bm{\hat{d}} (\bm{\xi})^\top \bm{z}_{k1} \, : \, \bm{T} (\bm{\xi}) \bm{x} + \bm{W} \bm{y}_k^0 + \bm{\hat{W}} (\bm{\xi}) \bm{z}_{k2} \leq \bm{h} (\bm{\xi}) \Big\}$,  in problem~\eqref{eq:piecewise_affine_decision_rule} that attains the same worst-case objective value. Similarly, every \emph{optimal solution} $(\bm{x}, \bm{y})$ to problem~\eqref{eq:piecewise_affine_decision_rule} gives rise to an optimal solution $(\bm{x}, \bm{y}^0, \bm{Y}, \bm{z})$, where $\bm{z}_k = (\bm{z}_{k1}, \bm{z}_{k2})$ with $\bm{z}_{k1} = \bm{Y}_k^\top \bm{d}$ and $\bm{z}_{k2} = [\bm{w}_1^\top \bm{Y}_k \; \ldots \; \bm{w}_L^\top \bm{Y}_k ]^\top$, $k = 1, \ldots, K$, in problem~\eqref{eq:piecewise_affine_k_adaptability}. Hence,~\eqref{eq:piecewise_affine_decision_rule} and~\eqref{eq:piecewise_affine_k_adaptability} share the same optimal value and the same sets of optimal solutions. 
\end{proof}

We close with an example that illustrates the benefits of piecewise affine decision rules.

\begin{ex}\label{ex:suboptimality_of_LDR}
Consider the following instance of the two-stage robust optimization problem~\eqref{eq:two_stage_ro}, which has been proposed in~\cite[Section~5.2]{Gorissen2013:robust_sums_of_maxima}:
\begin{equation*}
\sup_{\bm{\xi} \in [-1, 1]^2} \; \inf_{\bm{y} \in \mathbb{R}^4_{+}}
\left\{ \bm{e}^\top \bm{y} \, : \, y_1 \geq \xi_1 + \xi_2, \; y_2 \geq \xi_1 - \xi_2, \; y_3 \geq -\xi_1 + \xi_2, \; y_4 \geq -\xi_1 - \xi_2 \right\}.
\end{equation*}
The \emph{optimal second-stage policy} is $\bm{y}^\star (\bm{\xi}) = ([\xi_1 + \xi_2]_+, \, [\xi_1 - \xi_2]_+, \, [-\xi_1 + \xi_2]_+, \, [-\xi_1 - \xi_2]_+)$, where $[ \cdot ]_+ = \max \{ \cdot, 0 \}$, and it results in the optimal second-stage value function $Q^\star (\bm{\xi}) = [\xi_1 + \xi_2]_+ + [\xi_1 - \xi_2]_+ + [-\xi_1 + \xi_2]_+ + [-\xi_1 - \xi_2]_+$ with a worst-case objective value of $2$, see Figure~\ref{figure:optimal_value_function}. The \emph{best affine decision rule} $\bm{y}^1 : \Xi \overset{1}{\mapsto} \mathbb{R}^4_+$ is $\bm{y}^1 (\bm{\xi}) = \left(1 + \xi_2, \, 1 + \xi_1, \, 1 - \xi_1, \, 1 - \xi_2\right)$, and it results in the constant second-stage value function $Q^1 (\bm{\xi}) = 4$. The \emph{best $2$-adaptable affine decision rule} $\bm{y}^2 : \Xi \overset{2}{\mapsto} \mathbb{R}^4_+$, on the other hand, is given by
\begin{equation*}
\bm{y}^2 (\bm{\xi}) = \begin{dcases*}
\left(0,\, 1 + \xi_1,\, 1 + \xi_2,\, -\xi_1 - \xi_2\right) & if $\xi_1 + \xi_2 \leq 0$ \\
\left(\xi_1 + \xi_2,\, 1 - \xi_2,\, 1 - \xi_1,\, 0\right) & otherwise,
\end{dcases*}
\end{equation*}
and it results in the constant second-stage value function $Q^2(\bm{\xi}) = 2$. Thus, $2$-adaptable affine decision rules are optimal in this example. Figure~\ref{figure:suboptimality_of_LDR_y3} illustrates the optimal value, the affine approximation and the $2$-adaptable affine approximation of the decision variable $y_3$.

\begin{figure}[!tb]
\centering
\includegraphics[width=0.3\textwidth]{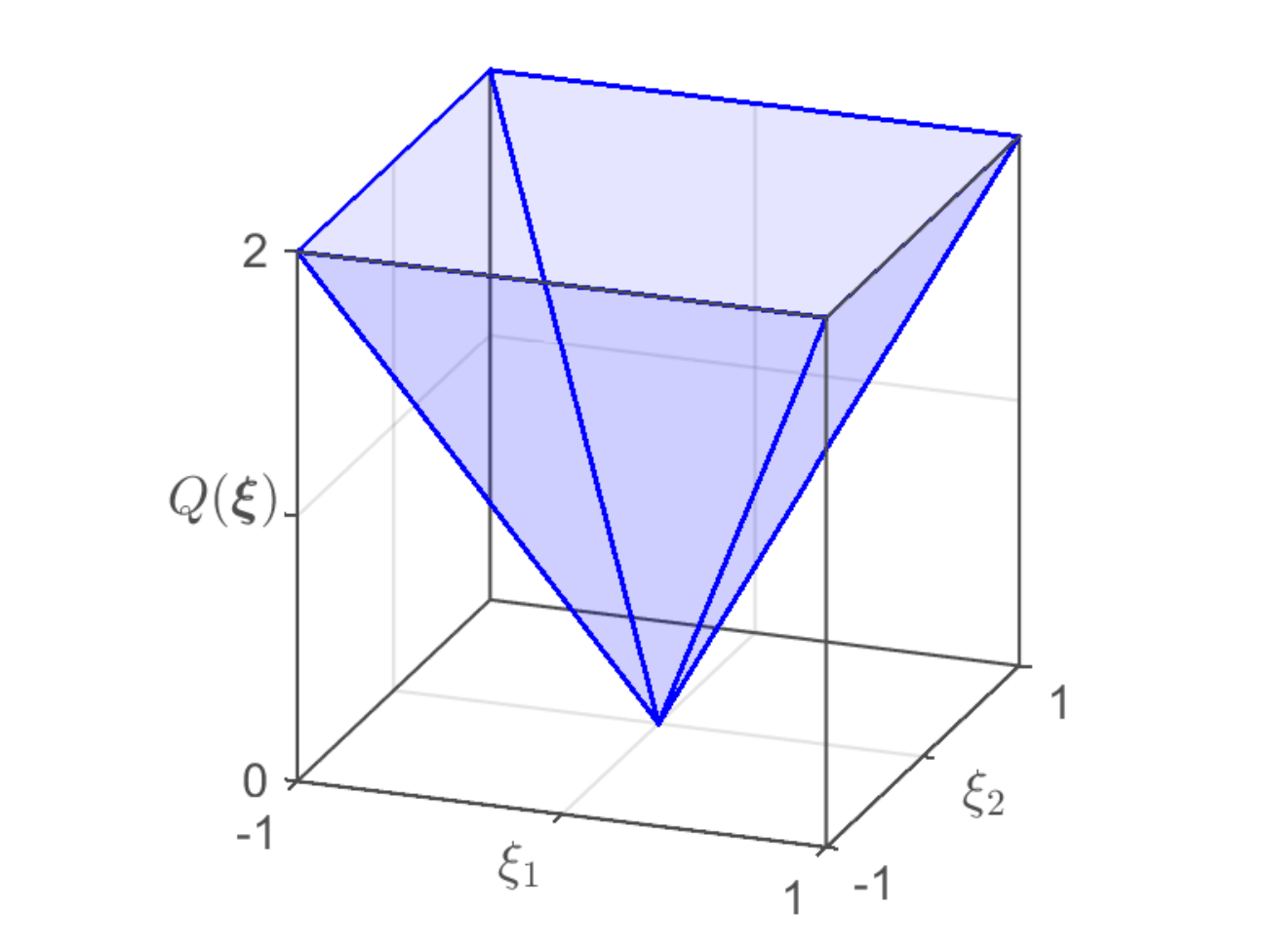}
\caption{The optimal second-stage value function in Example~\ref{ex:suboptimality_of_LDR} is given by the first-order cone $Q^\star (\bm{\xi}) = [\xi_1 + \xi_2]_+ + [\xi_1 - \xi_2]_+ + [-\xi_1 + \xi_2]_+ + [-\xi_1 - \xi_2]_+$.}
\label{figure:optimal_value_function}
\end{figure}

\begin{figure}[!tb]
\captionsetup[subfigure]{belowskip=-24pt}
\captionsetup[subfigure]{labelformat=empty}
\centering
\begin{subfigure}[b]{0.3\textwidth}
\includegraphics[width=\textwidth]{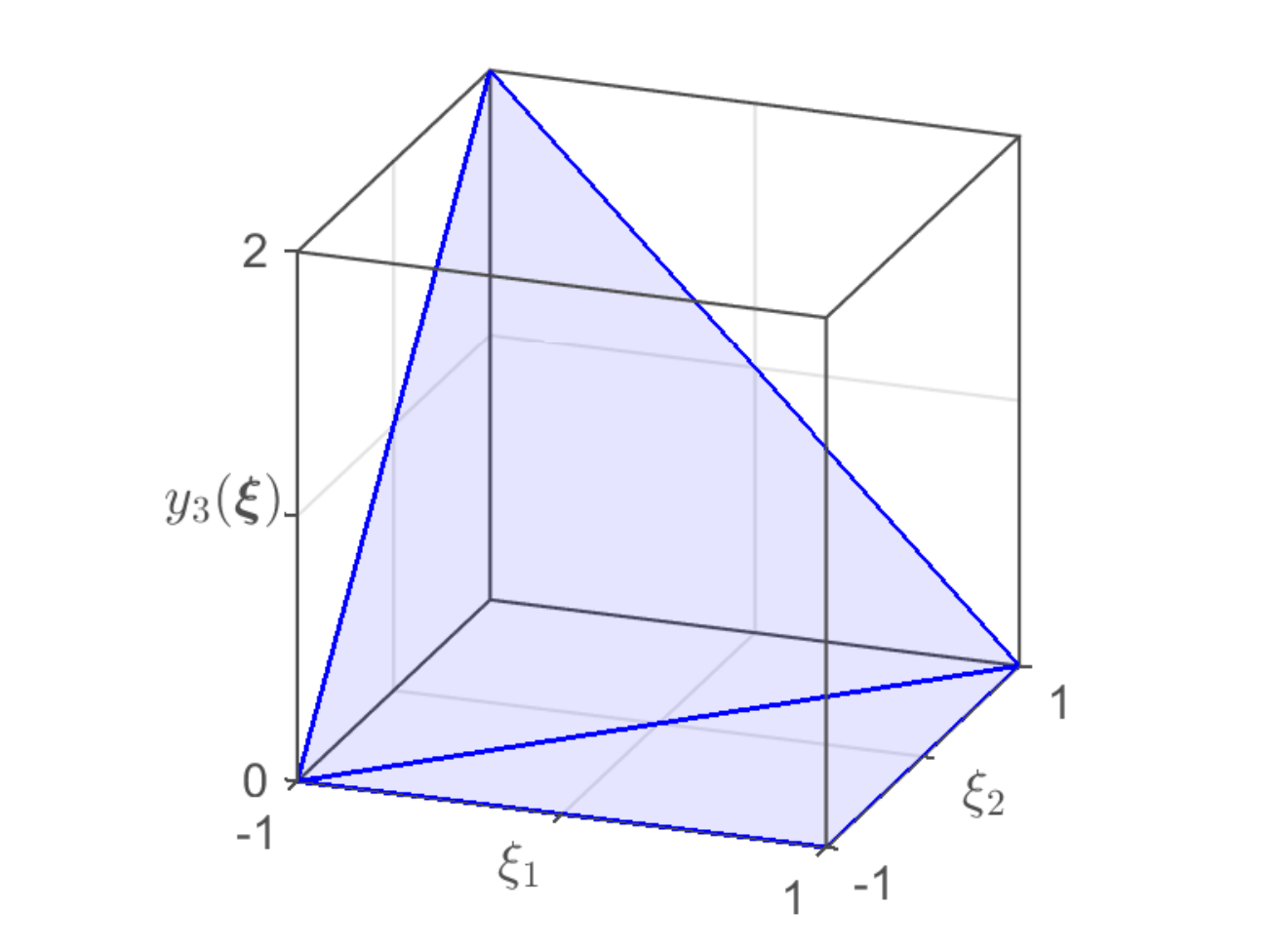}
\caption{}
\label{figure:suboptimality_of_LDR_y3_optimal}
\end{subfigure}~%
\begin{subfigure}[b]{0.3\textwidth}
\includegraphics[width=\textwidth]{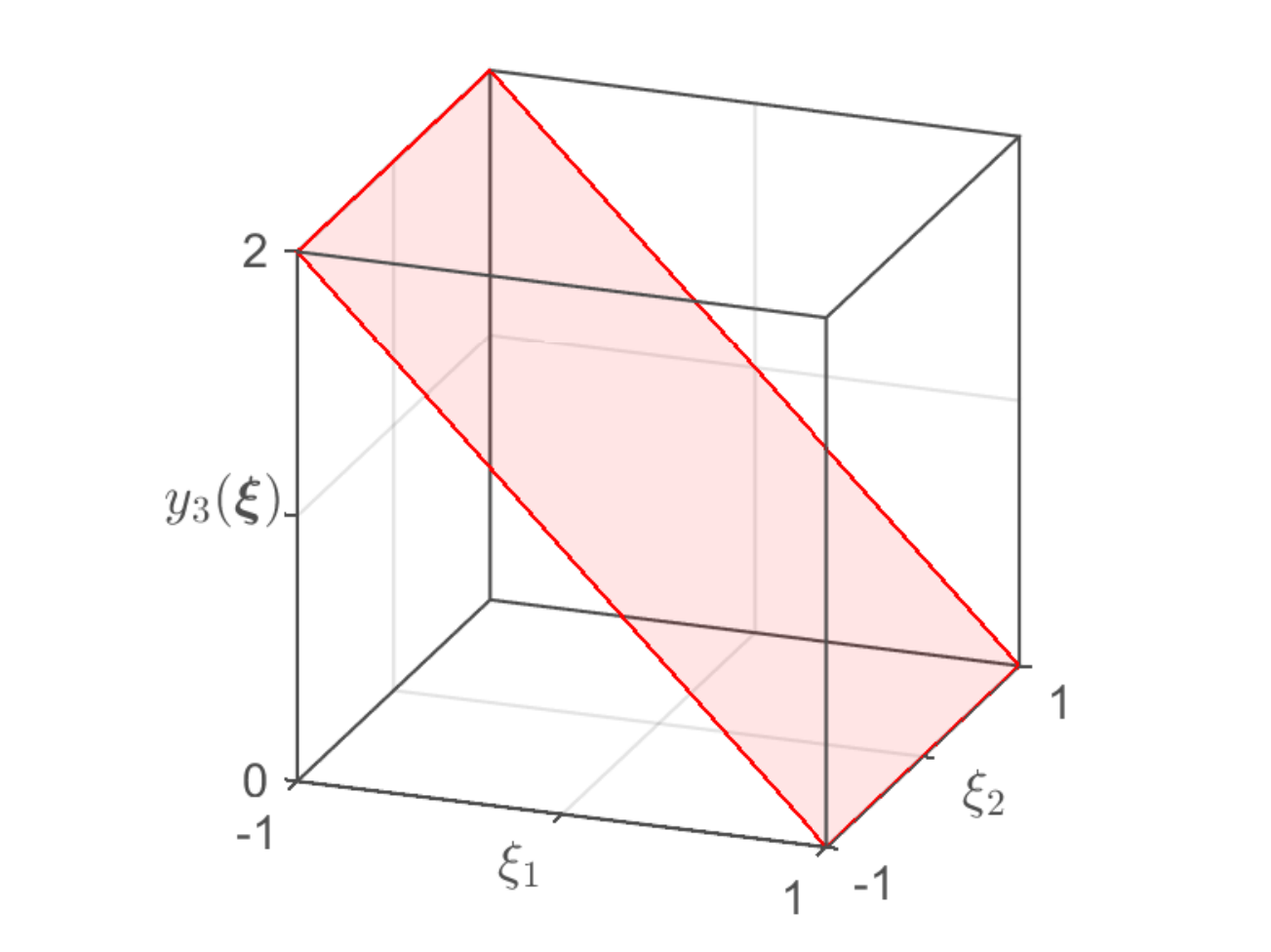}
\caption{}
\label{figure:suboptimality_of_LDR_y3_LDR}
\end{subfigure}~%
\begin{subfigure}[b]{0.3\textwidth}
\includegraphics[width=\textwidth]{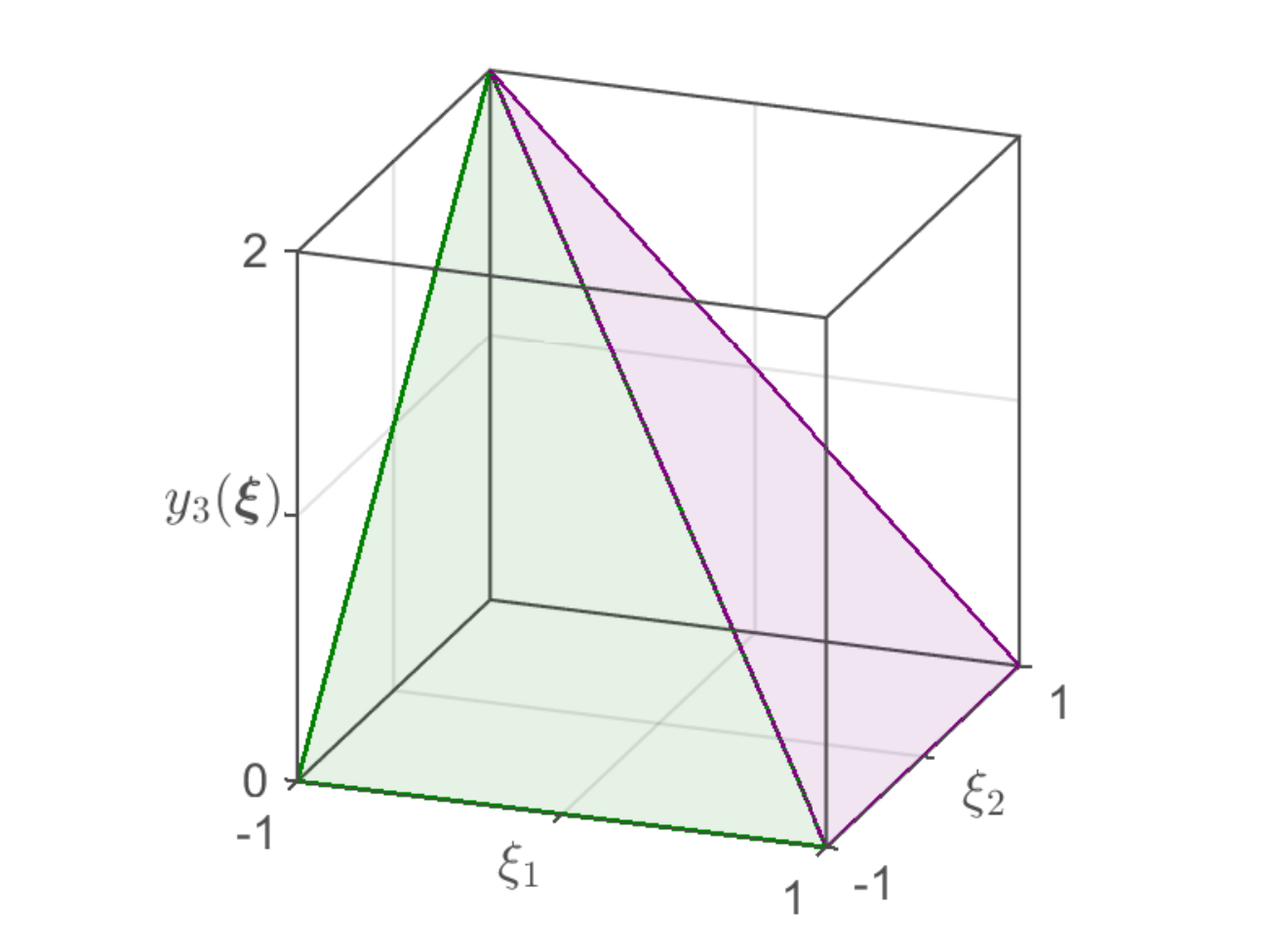}
\caption{}
\label{figure:suboptimality_of_LDR_y3_KAdaptable}
\end{subfigure}
\caption{Plot of the optimal second-stage policy $y_3 (\bm{\xi})$ in the two-stage robust optimization problem (left), the affine decision rule problem (middle) and the $2$-adaptable affine decision rule problem (right).}
\label{figure:suboptimality_of_LDR_y3}
\end{figure}
\end{ex}

The piecewise affine decision rules presented here can be readily combined with discrete second-stage decisions. For the sake of brevity, we omit the details of this straightforward extension.

\section{Solution Scheme}\label{sec:bab}
Our solution scheme for the $K$-adaptability problem~\eqref{eq:k_adaptability} is based on a reformulation as a semi-infinite disjunctive program which we present next.
\begin{obs}\label{obs:semiinfinite_disjunctive_reformulation}
The $K$-adaptability problem~\eqref{eq:k_adaptability} is equivalent to
\begin{equation}\label{eq:k_adaptability_semiinfinite}
\begin{array}{l@{\quad}l}
\text{\emph{minimize}} & \theta \\
\text{\emph{subject to}} & \theta \in \mathbb{R}, \;\; \bm{x} \in \mathcal{X}, \;\; \bm{y} \in \mathcal{Y}^K \\
& \displaystyle \bigvee_{k \in \mathcal{K}} \left[
\begin{array}{l}
\bm{c}^\top \bm{x} + \bm{d} (\bm{\xi})^\top \bm{y}_k \leq \theta \\
\bm{T} (\bm{\xi}) \bm{x} + \bm{W} (\bm{\xi}) \bm{y}_k \leq \bm{h} (\bm{\xi})
\end{array}
\right] \quad \forall\, \bm{\xi} \in \Xi.
\end{array}
\end{equation}
Moreover, if some of the constraints in problem~\eqref{eq:k_adaptability_semiinfinite} are deterministic, i.e., they do not depend on $\bm{\xi}$, then they can be moved outside the disjunction and instead be enforced for all $k \in \mathcal{K}$.
\end{obs}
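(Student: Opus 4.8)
The plan is to establish the equivalence in two parts, one for each claim of the observation. For the first claim, I would introduce an epigraph variable $\theta$ and rewrite the nested $\inf$--$\sup$--$\inf$ structure of problem~\eqref{eq:k_adaptability} in the semi-infinite disjunctive form~\eqref{eq:k_adaptability_semiinfinite}. The central quantity is the inner value
\[
g(\bm{x}, \bm{y}, \bm{\xi}) = \inf_{k \in \mathcal{K}} \left\{ \bm{c}^\top \bm{x} + \bm{d}(\bm{\xi})^\top \bm{y}_k \,:\, \bm{T}(\bm{\xi})\bm{x} + \bm{W}(\bm{\xi})\bm{y}_k \leq \bm{h}(\bm{\xi}) \right\},
\]
together with the convention, stated just below problem~\eqref{eq:k_adaptability}, that $g = +\infty$ whenever no policy $k$ is feasible at $\bm{\xi}$. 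For a fixed $(\bm{x}, \bm{y})$ the objective of problem~\eqref{eq:k_adaptability} is then $\sup_{\bm{\xi} \in \Xi} g(\bm{x}, \bm{y}, \bm{\xi})$, and minimizing $\theta$ subject to $g(\bm{x}, \bm{y}, \bm{\xi}) \leq \theta$ for all $\bm{\xi} \in \Xi$ reproduces the same optimal value and the same optimal solutions $(\bm{x}, \bm{y})$.

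The crucial step is to show that, for fixed $(\theta, \bm{x}, \bm{y})$ and fixed $\bm{\xi}$, the scalar inequality $g(\bm{x}, \bm{y}, \bm{\xi}) \leq \theta$ is equivalent to the single satisfied disjunct in~\eqref{eq:k_adaptability_semiinfinite}. Here I would exploit the finiteness of $\mathcal{K}$: the infimum defining $g$ is either attained at some feasible index or equals $+\infty$. Hence $g \leq \theta$ holds if and only if there exists $k \in \mathcal{K}$ that is simultaneously feasible, $\bm{T}(\bm{\xi})\bm{x} + \bm{W}(\bm{\xi})\bm{y}_k \leq \bm{h}(\bm{\xi})$, and meets the objective bound, $\bm{c}^\top \bm{x} + \bm{d}(\bm{\xi})^\top \bm{y}_k \leq \theta$, which is exactly the disjunction. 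Quantifying over all $\bm{\xi} \in \Xi$ yields the semi-infinite constraint and completes the first claim.

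For the second (``moreover'') claim I would argue an equality of optimal values rather than a pointwise logical equivalence, since pulling a $\bm{\xi}$-independent row out of the disjunction genuinely shrinks the feasible set. Let (A) denote~\eqref{eq:k_adaptability_semiinfinite} and (B) the reformulation in which every deterministic row is enforced for all $k$. As (B) imposes a superset of constraints, $\mathrm{opt}(B) \geq \mathrm{opt}(A)$ is immediate. For the reverse inequality, I would observe that whether a policy $\bm{y}_k$ satisfies a deterministic row does not depend on $\bm{\xi}$, so a policy violating such a row is infeasible at \emph{every} $\bm{\xi}$ and can never be the selected disjunct. Thus, in any solution of (A) with finite objective, at least one policy satisfies all deterministic rows, while any policy violating one is inert. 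Replacing each offending policy by a copy of a feasible one leaves the selected index, and hence the objective, unchanged for every $\bm{\xi}$, yet produces a solution feasible for (B), giving $\mathrm{opt}(A) \geq \mathrm{opt}(B)$.

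The main obstacle I anticipate is precisely this second part: the naive reading suggests a pointwise equivalence, but $\bigvee_k [\,\mathrm{det}_k \wedge \mathrm{unc}_k\,]$ and $[\bigwedge_k \mathrm{det}_k] \wedge [\bigvee_k \mathrm{unc}_k]$ differ for fixed data, so the argument must route through the copy/replacement construction and the inertness of deterministically infeasible policies. Care is also needed in the boundary case where all policies violate a deterministic row, in which both formulations are infeasible with objective $+\infty$, so the equivalence still holds. The first part, by contrast, is a routine epigraph reformulation whose only nontrivial ingredient is the finiteness of $\mathcal{K}$ that turns the defining infimum into an attained minimum.
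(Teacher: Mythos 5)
Your proof is correct and takes essentially the same route as the paper's: an epigraph reformulation whose pointwise equivalence with the disjunction rests on the finiteness of $\mathcal{K}$ (this is the step the paper dismisses as ``one readily verifies,'' which you helpfully make explicit), followed by the same policy-replacement argument for the ``moreover'' part, where a policy violating a deterministic row is observed to be inert at every $\bm{\xi}$ and is swapped for a copy of a feasible policy $\bm{y}_{k'}$, exactly as in the paper. No gaps.
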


In the following, we stipulate that the optimal value of~\eqref{eq:k_adaptability_semiinfinite} is $+ \infty$ whenever it is infeasible.

\begin{proof}[Proof of Observation~\ref{obs:semiinfinite_disjunctive_reformulation}]
Problem~\eqref{eq:k_adaptability} is infeasible if and only if (iff) for every $\bm{x} \in \mathcal{X}$ and $\bm{y} \in \mathcal{Y}^K$ there is a $\bm{\xi} \in \Xi$ such that $\bm{T} (\bm{\xi}) \bm{x} + \bm{W} (\bm{\xi}) \bm{y}_k \not\leq \bm{h} (\bm{\xi})$ for all $k \in \mathcal{K}$, which in turn is the case iff for every $\bm{x} \in \mathcal{X}$ and $\bm{y} \in \mathcal{Y}^K$, the disjunction in~\eqref{eq:k_adaptability_semiinfinite} is violated for at least one $\bm{\xi} \in \Xi$; that is, iff problem~\eqref{eq:k_adaptability_semiinfinite} is infeasible. We thus assume that both~\eqref{eq:k_adaptability} and~\eqref{eq:k_adaptability_semiinfinite} are feasible. In this case, one readily verifies that every feasible solution $(\bm{x}, \bm{y})$ to problem~\eqref{eq:k_adaptability} gives rise to a feasible solution $(\theta, \bm{x}, \bm{y})$, where $\theta = \sup\limits_{\bm{\xi} \in \Xi}\; \inf\limits_{k \in \mathcal{K}} \big\{ \bm{c}^\top \bm{x} + \bm{d} (\bm{\xi})^\top \bm{y}_k : \bm{T} (\bm{\xi}) \bm{x} + \bm{W} (\bm{\xi}) \bm{y}_k \leq \bm{h} (\bm{\xi}) \big\}$, in problem~\eqref{eq:k_adaptability_semiinfinite} with the same objective value. Likewise, any \emph{optimal} solution $(\theta, \bm{x}, \bm{y})$ to problem~\eqref{eq:k_adaptability_semiinfinite} corresponds to an optimal solution $(\bm{x}, \bm{y})$ in problem~\eqref{eq:k_adaptability}. Hence,~\eqref{eq:k_adaptability} and~\eqref{eq:k_adaptability_semiinfinite} share the same optimal value and the same sets of optimal solutions. 

We now claim that if $\bm{t}_l (\bm{\xi})^\top = \bm{t}_l^\top$, $\bm{w}_l (\bm{\xi})^\top = \bm{w}_l^\top$ and $h_l (\bm{\xi}) = h_l$ for all $l \in \{ 1, \ldots, L \} \setminus \mathcal{L}$, where $\mathcal{L} \subseteq \{ 1, \ldots, L \}$, then problem~\eqref{eq:k_adaptability_semiinfinite} is equivalent to
\begin{equation}\label{eq:k_adaptability_semiinfinite_reform}
\begin{array}{l@{\quad}l@{\quad}l}
\text{minimize} & \theta \\
\text{subject to} & \theta \in \mathbb{R}, \;\; \bm{x} \in \mathcal{X}, \;\; \bm{y} \in \mathcal{Y}^K \\
& \bm{t}_l^\top \bm{x} + \bm{w}_l^\top \bm{y}_k \leq h_l & \forall\, l \in \{ 1, \ldots, L \} \setminus \mathcal{L}, \; \forall\, k \in \mathcal{K} \\
& \displaystyle \bigvee_{k \in \mathcal{K}} \left[
\begin{array}{l}
\bm{c}^\top \bm{x} + \bm{d} (\bm{\xi})^\top \bm{y}_k \leq \theta \\
\bm{t}_l (\bm{\xi})^\top \bm{x} + \bm{w}_l (\bm{\xi})^\top \bm{y}_k \leq h_l (\bm{\xi})
\quad \forall\, l \in \mathcal{L}
\end{array}
\right] & \forall\, \bm{\xi} \in \Xi.
\end{array}
\tag{$\ref{eq:k_adaptability_semiinfinite}'$}
\end{equation}
By construction, every feasible solution $(\theta, \bm{x}, \bm{y})$ to problem~\eqref{eq:k_adaptability_semiinfinite_reform} is feasible in problem~\eqref{eq:k_adaptability_semiinfinite}. Conversely, fix any feasible solution $(\theta, \bm{x}, \bm{y})$ to problem~\eqref{eq:k_adaptability_semiinfinite} and assume that $\bm{t}_l^\top \bm{x} + \bm{w}_l^\top \bm{y}_k > h_l$ for some $k \in \mathcal{K}$ and $l \in \{ 1, \ldots, L \} \setminus \mathcal{L}$. In that case, the $k^\text{th}$ disjunct in~\eqref{eq:k_adaptability_semiinfinite} is violated for \emph{every} realization $\bm{\xi} \in \Xi$. We can therefore replace $\bm{y}_k$ with a different candidate policy $\bm{y}_{k'}$ that satisfies $\bm{t}_l^\top \bm{x} + \bm{w}_l^\top \bm{y}_{k'} \leq h_l$ for all $l \in \{ 1, \ldots, L \} \setminus \mathcal{L}$ without sacrificing feasibility. (Note that such a candidate policy $\bm{y}_{k'}$ exists since $(\theta, \bm{x}, \bm{y})$ is assumed to be feasible in~\eqref{eq:k_adaptability_semiinfinite}.) Replacing any infeasible policy in this way results in a solution that is feasible in problem~\eqref{eq:k_adaptability_semiinfinite_reform}.
\end{proof}

Problem~\eqref{eq:k_adaptability_semiinfinite} cannot be solved directly as it contains infinitely many disjunctive constraints. Instead, our solution scheme iteratively solves a sequence of (increasingly tighter) relaxations of this problem that are obtained by enforcing the disjunctive constraints over finite subsets of $\Xi$. Whenever the solution of such a relaxation violates the disjunction for some realization $\bm{\xi} \in \Xi$, we create $K$ subproblems that enforce the disjunction associated with $\bm{\xi}$ to be satisfied by the $k^\text{th}$ disjunct, $k = 1, \ldots, K$. Our solution scheme is reminiscent of discretization methods employed in \emph{semi-infinite programming}, which iteratively replace an infinite set of constraints with finite subsets and solve the resulting discretized problems. Indeed, our scheme can be interpreted as a generalization of Kelley's cutting-plane method \cite{Blankenship1976:semi_infinite, Kelley1960:cutting_plane} applied to semi-infinite disjunctive programs. In the special case where $K=1$, our method reduces to the cutting-plane method for (static) robust optimization problems proposed in \cite{MB09:robust_cutting_plane}.

In the remainder of this section, we describe our basic branch-and-bound scheme (Section~\ref{sec:bab_algorithm}), we study its convergence (Section~\ref{sec:bab_convergence}), we discuss algorithmic variants to the basic scheme that can enhance its numerical performance (Section~\ref{sec:bab_extensions}), and we present a heuristic variant that can address problems of larger scale (Section~\ref{sec:bab_heuristic}).

\subsection{Branch-and-Bound Algorithm}\label{sec:bab_algorithm}

Our solution scheme iteratively solves a sequence of scenario-based $K$-adaptability problems and separation problems. We define both problems first, and then we describe the overall algorithm.

\paragraph{The Scenario-Based $K$-Adaptability Problem.}
For a collection $\Xi_1, \ldots, \Xi_K$ of finite subsets of the uncertainty set $\Xi$, we define the \emph{scenario-based $K$-adaptability problem} as
\begin{equation}\label{eq:master_k_adaptability}
\begin{array}{r@{}l@{\quad}l}
\mathcal{M} (\Xi_1, \ldots, \Xi_K) \; = \;\;\;
& \text{minimize} & \theta \\
& \text{subject to} & \theta \in \mathbb{R}, \;\; \bm{x} \in \mathcal{X}, \;\; \bm{y} \in \mathcal{Y}^K \\
&& \left.\begin{array}{l}
\mspace{-11mu} \bm{c}^\top \bm{x} + \bm{d} (\bm{\xi})^\top \bm{y}_k \leq \theta \\
\mspace{-11mu} \bm{T} (\bm{\xi}) \bm{x} + \bm{W} (\bm{\xi}) \bm{y}_k \leq \bm{h} (\bm{\xi})
\end{array}
\right\} \;\; \forall\, \bm{\xi} \in \Xi_k,\; \forall\, k \in \mathcal{K}.
\end{array}
\end{equation}

If $\mathcal{X}$ and $\mathcal{Y}$ are convex, problem~\eqref{eq:master_k_adaptability} is an LP; otherwise, it is an MILP.
The problem is closely related to a relaxation of the semi-infinite disjunctive program~\eqref{eq:k_adaptability_semiinfinite} that enforces the disjunction only over the realizations $\bm{\xi} \in \bigcup_{k \in \mathcal{K}} \Xi_k$. More precisely, problem~\eqref{eq:master_k_adaptability} can be interpreted as a restriction of that relaxation which requires the $k^\text{th}$ candidate policy $\bm{y}_k$ to be worst-case optimal for all realizations $\bm{\xi} \in \Xi_k$, $k \in \mathcal{K}$. We obtain an optimal solution $(\theta, \bm{x}, \bm{y}_k)$ to the relaxed semi-infinite disjunctive program by solving $\mathcal{M} (\Xi_1, \ldots, \Xi_K)$ for all partitions $(\Xi_1, \ldots, \Xi_K)$ of $\bigcup_{k \in \mathcal{K}} \Xi_k$ and reporting the optimal solution $(\theta, \bm{x}, \bm{y}_k)$ of the problem $\mathcal{M} (\Xi_1, \ldots, \Xi_K)$ with the smallest objective value.

If $\Xi_k = \emptyset$ for all $k \in \mathcal{K}$, then problem~\eqref{eq:master_k_adaptability} is unbounded, and we stipulate that its optimal value is $-\infty$ and that its optimal value is attained by any solution $(-\infty, \bm{x}, \bm{y})$ with $(\bm{x}, \bm{y}) \in \mathcal{X} \times \mathcal{Y}^K$. Otherwise, if problem~\eqref{eq:master_k_adaptability} is infeasible for $\Xi_1,\ldots,\Xi_K$, then we define its optimal value to be $+ \infty$. In all other cases, the optimal value of problem~\eqref{eq:master_k_adaptability} is finite and it is attained by an optimal solution $(\theta, \bm{x}, \bm{y})$ since $\mathcal{X}$ and $\mathcal{Y}$ are compact.

\begin{rem}[Decomposability]
For $K$-adaptability problems without first-stage decisions $\bm{x}$, problem~\eqref{eq:master_k_adaptability} decomposes into $K$ scenario-based static robust optimization problems that are only coupled through the constraints referencing the epigraph variable $\theta$. In this case, we can recover an optimal solution to problem~\eqref{eq:master_k_adaptability} by solving each of the $K$ static problems individually and identifying the optimal $\theta$ as the maximum of their optimal values.
\end{rem}

\paragraph{The Separation Problem.}
For a feasible solution $(\theta, \bm{x}, \bm{y})$ to the scenario-based $K$-adaptability problem~\eqref{eq:master_k_adaptability}, we define the \emph{separation problem} as
\begin{equation}\label{eq:separation_explicit}
\begin{aligned}
\mathcal{S}(\theta, \bm{x}, \bm{y}) &= \max_{\bm{\xi} \in \Xi} \; S (\theta, \bm{x}, \bm{y}, \bm{\xi}), \; \text{where} \\
S (\theta, \bm{x}, \bm{y}, \bm{\xi}) &= \min_{k \in \mathcal{K}} \; \max \left\{ \bm{c}^\top \bm{x} + \bm{d} (\bm{\xi})^\top \bm{y}_k - \theta, \; \max_{l \in \{ 1, \ldots, L \}} \; \left\{ \bm{t}_l (\bm{\xi})^\top \bm{x} + \bm{w}_l (\bm{\xi})^\top \bm{y}_k - h_l (\bm{\xi}) \right\} \right\},
\end{aligned}
\end{equation}
for $\mathcal{S} : \mathbb{R} \cup \{- \infty\}\times \mathcal{X} \times \mathcal{Y}^K \mapsto \mathbb{R} \cup \{ + \infty \}$ and $S : \mathbb{R} \cup \{- \infty\}\times \mathcal{X} \times \mathcal{Y}^K \times \Xi \mapsto \mathbb{R} \cup \{ + \infty \}$. Whenever it is positive, the innermost maximum in the definition of $S (\theta, \bm{x}, \bm{y}, \bm{\xi})$ records the maximum constraint violation of the candidate policy $\bm{y}_k$ under the parameter realization $\bm{\xi} \in \Xi$. Likewise, the quantity $\bm{c}^\top \bm{x} + \bm{d} (\bm{\xi})^\top \bm{y}_k - \theta$ denotes the excess of the objective value of $\bm{y}_k$ under the realization $\bm{\xi}$ over the current candidate value of the worst-case objective, $\theta$. Thus, $S (\theta, \bm{x}, \bm{y}, \bm{\xi})$ is strictly positive if and only if every candidate policy $\bm{y}_k$ either is infeasible or results in an objective value greater than $\theta$ under the realization $\bm{\xi} \in \Xi$. Whenever $\theta$ is finite, the separation problem is feasible and bounded, and it has an optimal solution since $\Xi$ is nonempty and compact. Otherwise, we have $\mathcal{S}(\theta, \bm{x}, \bm{y}) = + \infty$, and the optimal value is attained by any $\bm{\xi} \in \Xi$.

\begin{obs}\label{obs:milp_reformulation_sep_problem}
The separation problem~\eqref{eq:separation_explicit} is equivalent to the MILP
\begin{equation}\label{eq:separation}
\begin{array}{l@{\quad}l}
\text{\emph{maximize}} & \zeta \\
\text{\emph{subject to}} & \zeta \in \mathbb{R}, \;\; \bm{\xi} \in \Xi, \;\; z_{kl} \in \{0,1\}, \, (k,l) \in \mathcal{K} \times \{ 0, 1, \ldots, L \} \\
& \left.\begin{array}{l}
\mspace{-11mu} \displaystyle \sum_{l = 0}^{L} z_{kl} = 1 \\
\mspace{-11mu} \displaystyle z_{k0} = 1 \;\; \Rightarrow \;\; \zeta \leq \bm{c}^\top \bm{x} + \bm{d} (\bm{\xi})^\top \bm{y}_k - \theta \\
\mspace{-11mu} \displaystyle z_{kl} = 1 \;\;\, \Rightarrow \;\; \zeta \leq \bm{t}_l (\bm{\xi})^\top \bm{x} + \bm{w}_l (\bm{\xi})^\top \bm{y}_k - h_l (\bm{\xi}) \quad \forall\, l \in \{1, \ldots, L\}
\end{array}
\right\} \;\; \forall\, k \in \mathcal{K}.
\end{array}
\end{equation}
This problem can be solved in polynomial time if $\Xi$ is convex and the number of policies $K$ is fixed.
\end{obs}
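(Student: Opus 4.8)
The plan is to establish the two assertions in turn. For the equivalence, the core idea is that~\eqref{eq:separation} is an epigraph reformulation of the nested min-max-max in~\eqref{eq:separation_explicit}: introducing the shorthand $g_{k0}(\bm{\xi}) = \bm{c}^\top \bm{x} + \bm{d}(\bm{\xi})^\top \bm{y}_k - \theta$ and $g_{kl}(\bm{\xi}) = \bm{t}_l(\bm{\xi})^\top \bm{x} + \bm{w}_l(\bm{\xi})^\top \bm{y}_k - h_l(\bm{\xi})$ for $l \in \{1,\ldots,L\}$, one has $S(\theta,\bm{x},\bm{y},\bm{\xi}) = \min_{k \in \mathcal{K}} \max_{l \in \{0,\ldots,L\}} g_{kl}(\bm{\xi})$, and the binaries $z_{kl}$ encode, for each policy $k$, which of the $L+1$ expressions $g_{kl}$ is used to bound $\zeta$ from above.

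First I would fix $\bm{\xi} \in \Xi$ and show that maximizing over $(\zeta,\bm{z})$ alone reproduces $S(\theta,\bm{x},\bm{y},\bm{\xi})$. The constraints $\sum_{l=0}^{L} z_{kl}=1$ together with the indicator implications force, for each $k$, a single selected index $l_k$ and the bound $\zeta \le g_{k,l_k}(\bm{\xi})$. Since the objective maximizes $\zeta$ while $\bm{z}$ is free, the optimal selection sets each $l_k$ to an index attaining $\max_{l} g_{kl}(\bm{\xi})$, so the binding constraint becomes $\zeta \le \min_{k} \max_{l} g_{kl}(\bm{\xi}) = S(\theta,\bm{x},\bm{y},\bm{\xi})$, which is attained. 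Making both directions explicit: any feasible $(\zeta,\bm{z})$ satisfies $\zeta \le g_{k,l_k}(\bm{\xi}) \le \max_l g_{kl}(\bm{\xi})$ for every $k$, hence $\zeta \le S$; conversely, any $\zeta \le S$ admits a feasible $\bm{z}$ by choosing, for each $k$, an index $l_k$ that achieves the inner maximum. Maximizing the resulting identity over $\bm{\xi} \in \Xi$ then gives that the optimal value of~\eqref{eq:separation} equals $\max_{\bm{\xi} \in \Xi} S(\theta,\bm{x},\bm{y},\bm{\xi}) = \mathcal{S}(\theta,\bm{x},\bm{y})$. The degenerate case $\theta = -\infty$ is covered by the stated convention, under which both sides equal $+\infty$.

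For the polynomial-time claim, I would exploit that the only integrality lies in $\bm{z}$ and that the equality $\sum_{l=0}^{L} z_{kl}=1$ leaves exactly $L+1$ choices per policy, so there are precisely $(L+1)^K$ feasible binary assignments; for fixed $K$ this count is polynomial in $L$. For each such assignment the binaries are fixed, and since $\bm{d},\bm{T},\bm{W},\bm{h}$ are affine in $\bm{\xi}$ while $(\theta,\bm{x},\bm{y})$ are data, the residual problem is to maximize $\zeta$ subject to upper bounds on $\zeta$ that are affine in $\bm{\xi}$, over $\bm{\xi} \in \Xi$ --- a convex program, in fact a linear program whenever $\Xi$ is polyhedral, and hence solvable in polynomial time. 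Enumerating all $(L+1)^K$ assignments, solving the corresponding convex subproblem for each, and returning the largest value therefore solves~\eqref{eq:separation} in polynomial time.

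The step I expect to be the main obstacle is the bookkeeping in the equivalence: one must argue that the min-max-max collapses correctly, namely that the inner minimization over $k$ turns into a family of upper bounds on $\zeta$ indexed by $k$ (because the outer problem maximizes $\zeta$), while for each $k$ the maximization over $l$ is realized by a single selected bound through the disjunctive binaries. Once this directionality is pinned down, both the equivalence and the enumeration argument are routine.
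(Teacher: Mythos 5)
Your proposal is correct and follows essentially the same route as the paper's proof: both directions of the equivalence are established by selecting, for each policy $k$, a binary indicator of an index attaining the inner maximum (so that feasible $\zeta$ are exactly those bounded by $\min_{k}\max_{l}$ of the violation expressions), and the tractability claim is settled by the same enumeration of the $(L+1)^K$ feasible binary assignments, each leaving a linear (convex) program in $(\zeta,\bm{\xi})$. The only cosmetic difference is that you argue pointwise in $\bm{\xi}$ and then maximize, whereas the paper phrases the two inequalities directly between $\mathcal{S}(\theta,\bm{x},\bm{y})$ and the optimal value of the MILP.
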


\begin{proof}
Fix any feasible solution $(\theta, \bm{x}, \bm{y})$ to the scenario-based $K$-adaptability problem~\eqref{eq:master_k_adaptability}. For every $\bm{\xi} \in \Xi$, we can construct a feasible solution $(\zeta, \bm{\xi}, \bm{z})$ to problem~\eqref{eq:separation} with $\zeta = S (\theta, \bm{x}, \bm{y}, \bm{\xi})$ by setting $z_{k0} = 1$ if $\bm{c}^\top \bm{x} + \bm{d} (\bm{\xi})^\top \bm{y}_k - \theta \geq \bm{t}_l (\bm{\xi})^\top \bm{x} + \bm{w}_l (\bm{\xi})^\top \bm{y}_k - h_l (\bm{\xi})$ for all $l = 1, \ldots, L$ and $z_{kl} = 1$ for $l \in \mathop{\arg \max}\limits_{l \in \{1, \ldots, L\}} \left\{ \bm{t}_l (\bm{\xi})^\top \bm{x} + \bm{w}_l (\bm{\xi})^\top \bm{y}_k - h_l (\bm{\xi}) \right\}$ otherwise (where ties can be broken arbitrarily). We thus conclude that $\mathcal{S} (\theta, \bm{x}, \bm{y})$ is less than or equal to the optimal value of problem~\eqref{eq:separation}. Likewise, every feasible solution $(\zeta, \bm{\xi}, \bm{z})$ to problem~\eqref{eq:separation} satisfies $\zeta \leq \max \big\{ \bm{c}^\top \bm{x} + \bm{d} (\bm{\xi})^\top \bm{y}_k - \theta, \; \max\limits_{l \in \{ 1, \ldots, L \}} \; \{ \bm{t}_l (\bm{\xi})^\top \bm{x} + \bm{w}_l (\bm{\xi})^\top \bm{y}_k - h_l (\bm{\xi}) \} \big\}$ for all $k \in \mathcal{K}$; that is, $\zeta \leq S (\theta, \bm{x}, \bm{y}, \bm{\xi})$. Thus, the optimal value of problem~\eqref{eq:separation} is less than or equal to $\mathcal{S} (\theta, \bm{x}, \bm{y})$ as well.

If the number of policies $K$ is fixed and the uncertainty set $\Xi$ is convex, then problem~\eqref{eq:separation} can be solved by enumerating all $(L + 1)^K$ possible choices for $\bm{z}$, solving the resulting linear programs in $\zeta$ and $\bm{\xi}$ and reporting the solution with the maximum value of $\zeta$.
\end{proof}

\paragraph{The Algorithm.}
Our solution scheme solves a sequence of scenario-based $K$-adaptability problems~\eqref{eq:master_k_adaptability} over monotonically increasing scenario sets $\Xi_k$, $k \in \mathcal{K}$. At each iteration, the separation problem~\eqref{eq:separation} identifies a new scenario $\bm{\xi} \in \Xi$ to be added to these sets.
\begin{enumerate}
\item \textit{Initialize.}\label{algorithm:initialize} Set $\mathcal{N} \gets \{ \tau^0 \}$ (node set), where $\tau^0 = (\Xi_1^0, \ldots, \Xi_K^0)$ with $\Xi_k^0 = \emptyset$ for all $k \in \mathcal{K}$ (root node). Set $(\theta^\text{i}, \bm{x}^\text{i}, \bm{y}^\text{i}) \gets (+\infty, \emptyset, \emptyset)$ (incumbent solution).

\item \textit{Check convergence.}\label{algorithm:check_convergence} If $\mathcal{N} = \emptyset$, then stop and declare infeasibility (if $\theta^\text{i} = +\infty$) or report $(\bm{x}^\text{i}, \bm{y}^\text{i})$ as an optimal solution to problem~\eqref{eq:k_adaptability}.

\item \textit{Select node.}\label{algorithm:node_selection} Select a node $\tau = (\Xi_1, \ldots, \Xi_K)$ from $\mathcal{N}$. Set $\mathcal{N} \gets \mathcal{N} \setminus \{ \tau \}$.

\item \textit{Process node.}\label{algorithm:master_problem} Let $(\theta, \bm{x}, \bm{y})$ be an optimal solution to the scenario-based $K$-adaptability problem~\eqref{eq:master_k_adaptability}. If $\theta \geq \theta^\text{i}$, then go to Step~\ref{algorithm:check_convergence}.

\item \textit{Check feasibility.}\label{algorithm:separation_problem} Let $(\zeta, \bm{\xi}, \bm{z})$ be an optimal solution to the separation problem~\eqref{eq:separation}. If $\zeta \leq 0$, then set $(\theta^\text{i}, \bm{x}^\text{i}, \bm{y}^\text{i}) \gets (\theta, \bm{x}, \bm{y})$ and go to Step~\ref{algorithm:check_convergence}.

\item \textit{Branch.}\label{algorithm:branching} Instantiate $K$ new nodes $\tau_1, \ldots, \tau_K$ as follows: $\tau_k = (\Xi_1, \ldots, \Xi_k \cup \{ \bm{\xi} \}, \ldots, \Xi_K)$ for each $k \in \mathcal{K}$. Set $\mathcal{N} \gets \mathcal{N} \cup \{\tau_1, \ldots, \tau_K\}$ and go to Step~\ref{algorithm:node_selection}.
\end{enumerate}

Our branch-and-bound algorithm can be interpreted as an uncertainty set partitioning scheme. For a solution $(\theta, \bm{x}, \bm{y})$ in Step~\ref{algorithm:master_problem}, the sets
\begin{equation*}
\Xi(\theta, \bm{x}, \bm{y}_k) = \left\{ \bm{\xi} \in \Xi \, : \, \bm{c}^\top \bm{x} + \bm{d}(\bm{\xi})^\top \bm{y}_k \leq \theta,\; \bm{T} (\bm{\xi}) \bm{x} + \bm{W} (\bm{\xi}) \bm{y}_k \leq \bm{h} (\bm{\xi}) \right\}, \;\; k \in \mathcal{K},
\end{equation*}
describe the regions of the uncertainty set $\Xi$ for which at least one of the candidate policies is feasible and results in an objective value smaller than or equal to $\theta$. Step~\ref{algorithm:separation_problem} of the algorithm attempts to identify a realization $\bm{\xi} \in \Xi \setminus \bigcup_{k \in \mathcal{K}} \Xi(\theta, \bm{x}, \bm{y}_k)$ for which every candidate policy either is infeasible or results in an objective value that exceeds $\theta$. If there is no such realization, then the solution $(\bm{x}, \bm{y})$ is feasible in the $K$-adaptability problem~\eqref{eq:k_adaptability}. Otherwise, Step~\ref{algorithm:branching} assigns the realization $\bm{\xi}$ to each scenario subset $\Xi_k$, $k \in \mathcal{K}$, in turn. Figure~\ref{figure:algorithm_example} illustrates our solution scheme.

\begin{figure}[!ht]
\centering
\includegraphics[width=0.7\textwidth]{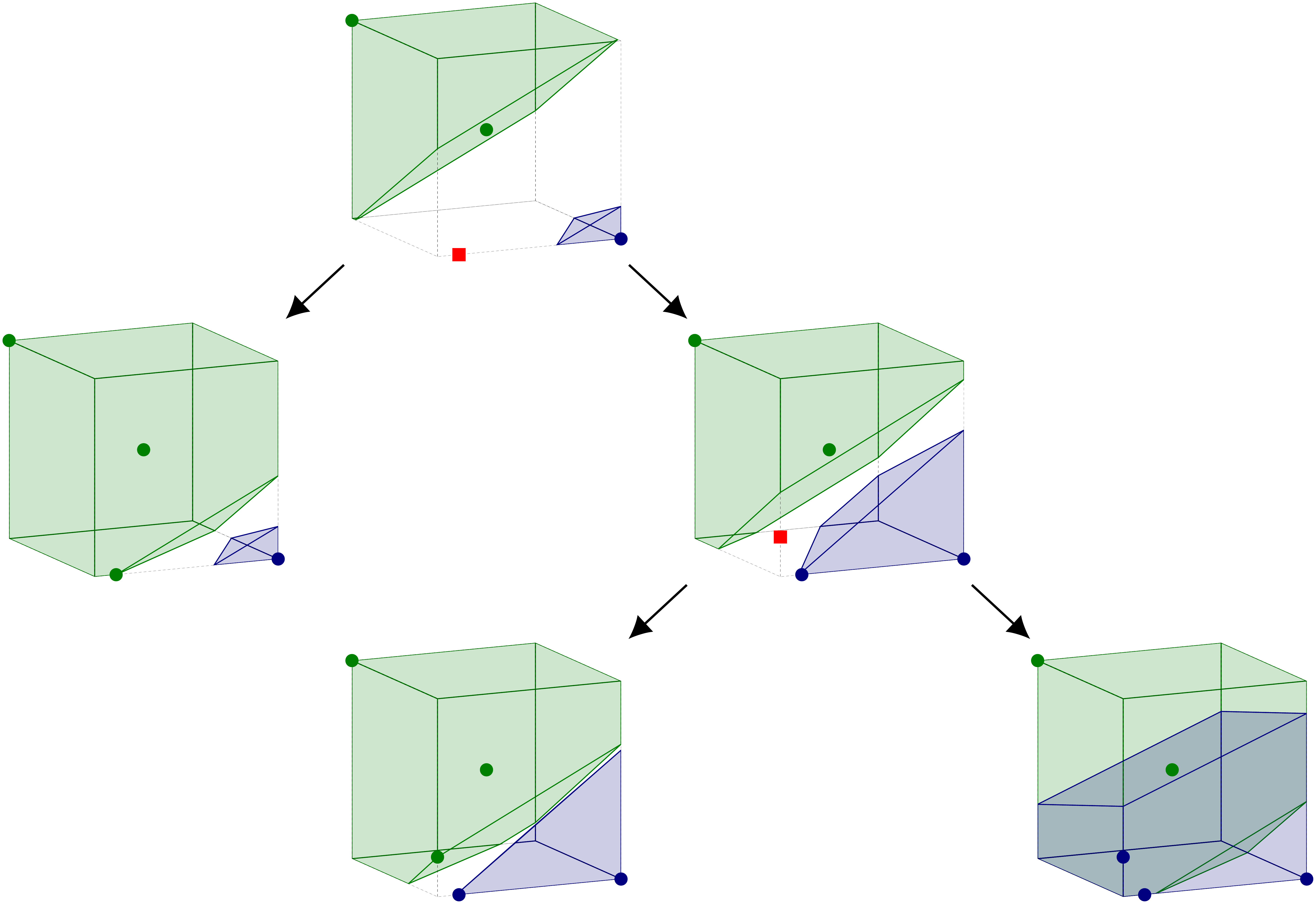}
\caption{An illustrative example with $K = 2$ policies. Each cube represents the uncertainty set $\Xi$ while the shaded regions represent $\Xi(\theta, \bm{x}, \bm{y}_1)$ and $\Xi(\theta, \bm{x}, \bm{y}_2)$. The green and blue dots represent elements of the sets $\Xi_1$ and $\Xi_2$, respectively, while the red squares represent the candidate realizations $\bm{\xi}$ identified in Step~\ref{algorithm:separation_problem} of the algorithm.}
\label{figure:algorithm_example}
\end{figure}

\subsection{Convergence Analysis}\label{sec:bab_convergence}

We now establish the correctness of our branch-and-bound scheme, as well as conditions for its asymptotic and finite convergence.

\begin{thm}[Correctness]\label{prop:algorithm_correctness}
If the branch-and-bound scheme terminates, then it either returns an optimal solution to problem~\eqref{eq:k_adaptability} or correctly identifies the latter as infeasible.
\end{thm}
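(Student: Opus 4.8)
The plan is to establish two complementary facts: \emph{(i)} every incumbent solution the algorithm records is feasible in problem~\eqref{eq:k_adaptability} with worst-case objective at most $\theta^{\text{i}}$, so that the returned value is always a valid upper bound on the optimal value $v^\star$ of~\eqref{eq:k_adaptability}; and \emph{(ii)} upon termination (i.e., when $\mathcal{N} = \emptyset$) the incumbent value satisfies $\theta^{\text{i}} \le v^\star$, so the two bounds coincide and the incumbent is optimal. The infeasible case is handled as a degenerate instance of the same argument. Throughout I would rely on the equivalence between~\eqref{eq:k_adaptability} and the semi-infinite disjunctive program of Observation~\ref{obs:semiinfinite_disjunctive_reformulation}, together with the defining property of the separation problem~\eqref{eq:separation_explicit}.

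For fact~\emph{(i)}, I would use that the incumbent is updated only in Step~\ref{algorithm:separation_problem}, when $\zeta = \mathcal{S}(\theta, \bm{x}, \bm{y}) \le 0$. By the definition of $\mathcal{S}$, this holds exactly when $S(\theta, \bm{x}, \bm{y}, \bm{\xi}) \le 0$ for \emph{every} $\bm{\xi} \in \Xi$, i.e., when for each $\bm{\xi}$ some candidate policy $\bm{y}_k$ is feasible and attains an objective value at most $\theta$. This is precisely the statement that the disjunction in~\eqref{eq:k_adaptability_semiinfinite} holds for all $\bm{\xi}$, so $(\bm{x}, \bm{y})$ is feasible in~\eqref{eq:k_adaptability} with worst-case objective at most $\theta = \theta^{\text{i}}$, whence $\theta^{\text{i}} \ge v^\star$. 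The infeasible case then follows immediately: if~\eqref{eq:k_adaptability} is infeasible, $\zeta \le 0$ can never occur, so the incumbent stays at $+\infty$ and the algorithm correctly declares infeasibility in Step~\ref{algorithm:check_convergence}.

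The crux is fact~\emph{(ii)}, which I expect to be the main obstacle, because the scenario-based problem~\eqref{eq:master_k_adaptability} is a \emph{restriction} (it fixes which disjunct handles each scenario), not a pure relaxation, so the bound $\mathcal{M}(\Xi_1, \ldots, \Xi_K) \le v^\star$ need not hold at an arbitrary node. I would sidestep this by exhibiting one branch of the tree along which the bound does hold. Fix an optimal $(\hat{\bm{x}}, \hat{\bm{y}})$ with worst-case value $\hat\theta = v^\star$ (finite, since $\mathcal{X}, \mathcal{Y}, \Xi$ are compact) and choose an assignment $\hat a : \Xi \to \mathcal{K}$ such that $\hat{\bm{y}}_{\hat a(\bm{\xi})}$ is feasible and attains objective at most $\hat\theta$ at each $\bm{\xi}$. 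Call a node $(\Xi_1, \ldots, \Xi_K)$ \emph{consistent} if $\Xi_k \subseteq \hat a^{-1}(k)$ for all $k$; for any consistent node, $(\hat\theta, \hat{\bm{x}}, \hat{\bm{y}})$ is feasible in~\eqref{eq:master_k_adaptability}, so its optimal value is at most $\hat\theta$. The root is consistent, and the key branching observation is that processing a consistent node in Step~\ref{algorithm:branching} on a separating scenario $\bm{\xi}$ produces the consistent child $\tau_{\hat a(\bm{\xi})}$, since $\bm{\xi} \in \hat a^{-1}(\hat a(\bm{\xi}))$.

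Finally I would package these into an invariant maintained under the algorithm's node-selection rule: \emph{as long as $\theta^{\text{i}} > \hat\theta$, the node set $\mathcal{N}$ contains a consistent node}. A consistent node has optimal value at most $\hat\theta < \theta^{\text{i}}$, so it is never pruned in Step~\ref{algorithm:master_problem}; when it is eventually selected, it either updates the incumbent to a value $\le \hat\theta$ (contradicting $\theta^{\text{i}} > \hat\theta$) or branches and deposits a fresh consistent child back into $\mathcal{N}$, while processing any non-consistent node leaves the existing consistent node in $\mathcal{N}$ untouched. Hence $\mathcal{N}$ cannot empty while $\theta^{\text{i}} > \hat\theta$; since termination requires $\mathcal{N} = \emptyset$, we must have $\theta^{\text{i}} \le \hat\theta = v^\star$ on termination. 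Combined with fact~\emph{(i)}, this yields $\theta^{\text{i}} = v^\star$ and the feasibility of $(\bm{x}^{\text{i}}, \bm{y}^{\text{i}})$, establishing optimality. The delicate point, which I would verify carefully, is precisely that this invariant survives the \emph{arbitrary} selection of nodes in Step~\ref{algorithm:node_selection}.
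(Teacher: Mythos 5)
Your proposal is correct and takes essentially the same approach as the paper: both proofs combine the fact that incumbents are only ever updated to feasible solutions (which settles the infeasible case and gives $\theta^\text{i} \geq v^\star$) with the observation that, upon branching, an optimal solution $(\theta^\star, \bm{x}^\star, \bm{y}^\star)$ remains feasible in the scenario-based problem~\eqref{eq:master_k_adaptability} of at least one child, so that some node on this surviving branch must eventually be fathomed in Step~\ref{algorithm:master_problem} or Step~\ref{algorithm:separation_problem}, and either fathoming certifies optimality of the incumbent. Your forward loop invariant over ``consistent'' nodes (via the fixed assignment $\hat{a}$) is a forward-in-time repackaging of the paper's backward argument, which inspects the terminated tree, collects the set of nodes at which the optimal solution is master-feasible, and examines a node of that set having no children in it.
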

\begin{proof}
We first show that if the problem instance is infeasible, then the algorithm terminates with the incumbent solution $(\theta^\text{i}, \bm{x}^\text{i}, \bm{y}^\text{i}) = (+\infty, \emptyset, \emptyset)$. Indeed, the algorithm can only update the incumbent solution in Step~\ref{algorithm:separation_problem} if the objective value of the separation problem is non-positive. By construction, this is only possible if the algorithm has determined a feasible solution.

We now show that for feasible problem instances, the algorithm terminates with an optimal solution $(\bm{x}^\text{i}, \bm{y}^\text{i})$ of problem~\eqref{eq:k_adaptability}. To this end, assume that $(\bm{x}^\star, \bm{y}^\star)$ is an optimal solution of problem~\eqref{eq:k_adaptability} with objective value $\theta^\star$. Let $\mathcal{T}$ be the set of all nodes of the branch-and-bound tree for which $(\theta^\star, \bm{x}^\star, \bm{y}^\star)$ is feasible in the corresponding scenario-based $K$-adaptability problem~\eqref{eq:master_k_adaptability}. Note that $\mathcal{T} \neq \emptyset$ since $(\theta^\star, \bm{x}^\star, \bm{y}^\star)$ is feasible in the root node. Let $\mathcal{T}' \subseteq \mathcal{T}$ be the set of those nodes which have children in $\mathcal{T}$ and consider the set $\mathcal{T}'' = \mathcal{T} \setminus \mathcal{T}'$; by construction, we have $\mathcal{T}'' \neq \emptyset$. Consider an arbitrary node $\tau \in \mathcal{T}''$. By definition of $\mathcal{T}''$, our algorithm has not branched $\tau$. Since $\tau$ has been selected in Step~\ref{algorithm:node_selection}, this is only possible if either \emph{(i)} $\tau$ has been fathomed in Step~\ref{algorithm:master_problem} or if \emph{(ii)} $\tau$ has been fathomed in Step~\ref{algorithm:separation_problem}. In the former case, the solution $(\theta^\star, \bm{x}^\star, \bm{y}^\star)$ must have been weakly dominated by the incumbent solution $(\theta^\text{i}, \bm{x}^\text{i}, \bm{y}^\text{i})$, which therefore must be optimal as well. In the latter case, the incumbent solution must have been updated to $(\theta^\star, \bm{x}^\star, \bm{y}^\star)$.
\end{proof}

We now show that our branch-and-bound scheme converges asymptotically to an optimal solution of the $K$-adaptability problem~\eqref{eq:k_adaptability}. Our result has two implications: \emph{(i)} for infeasible problem instances, the algorithm always terminates after finitely many iterations, i.e., infeasibility is detected in finite time; \emph{(ii)} for feasible problem instances, the algorithm eventually only inspects solutions in the neighborhood of optimal solutions.

\begin{thm}[Asymptotic Convergence]\label{thm:algorithm_convergence}
Every accumulation point $(\hat{\theta}, \bm{\hat{x}}, \bm{\hat{y}})$ of the solutions to the scenario-based $K$-adaptability problem~\eqref{eq:master_k_adaptability} in an infinite branch of the branch-and-bound tree gives rise to an optimal solution $(\bm{\hat{x}}, \bm{\hat{y}})$ of the $K$-adaptability problem~\eqref{eq:k_adaptability} with objective value $\hat{\theta}$.
\end{thm}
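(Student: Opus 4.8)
The plan is to split the claim into a \emph{feasibility} part---that $(\bm{\hat{x}}, \bm{\hat{y}})$ is feasible in~\eqref{eq:k_adaptability} with worst-case value at most $\hat{\theta}$, so that $\theta^\star \leq \hat{\theta}$ where $\theta^\star$ denotes the optimal value of~\eqref{eq:k_adaptability}---and an \emph{optimality} part, that $\hat{\theta} \leq \theta^\star$. I would first record two preliminaries. Along any branch the scenario sets $\Xi_1, \ldots, \Xi_K$ only grow, so each step of the branch adds constraints to the scenario-based problem~\eqref{eq:master_k_adaptability}; hence the values $\theta^n$ obtained in Step~\ref{algorithm:master_problem} are non-decreasing. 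Since $\mathcal{X}$, $\mathcal{Y}$ and $\Xi$ are compact and the objective is continuous, $\{\theta^n\}$ is bounded above, so it converges and $\hat{\theta} = \lim_n \theta^n$ independently of the subsequence; compactness of $\mathcal{X} \times \mathcal{Y}^K$ also guarantees that accumulation points $(\bm{\hat{x}}, \bm{\hat{y}})$ exist.

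The feasibility part is the main technical step, and its linchpin is that the separation objective $S$ in~\eqref{eq:separation_explicit} is a finite min-max of affine (indeed bilinear) functions and is therefore \emph{jointly continuous} in $(\theta, \bm{x}, \bm{y}, \bm{\xi})$---in sharp contrast to the evaluation problem underlying~\eqref{eq:k_adaptability}, whose $+\infty$ penalty for infeasibility renders it discontinuous. Consequently $\mathcal{S}(\theta, \bm{x}, \bm{y}) = \max_{\bm{\xi} \in \Xi} S(\theta, \bm{x}, \bm{y}, \bm{\xi})$ is continuous in $(\theta, \bm{x}, \bm{y})$ because $\Xi$ is compact. I would then exploit \emph{persistence}: the scenario $\bm{\xi}^n$ separated in Step~\ref{algorithm:separation_problem} is appended to exactly one set $\Xi_{k_n}$ in Step~\ref{algorithm:branching} and stays there at every deeper node, so for indices $n < m$ on the branch the solution $(\theta^m, \bm{x}^m, \bm{y}^m)$ satisfies the $k_n$-th disjunct at $\bm{\xi}^n$, whence $S(\theta^m, \bm{x}^m, \bm{y}^m, \bm{\xi}^n) \leq 0$. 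Passing to a subsequence along which the solutions converge to $(\hat{\theta}, \bm{\hat{x}}, \bm{\hat{y}})$ and, using compactness of $\Xi$, the separated scenarios converge to some $\bar{\bm{\xi}}$, a double limit---first $m \to \infty$ for fixed $n$, then $n \to \infty$---together with continuity of $S$ gives $S(\hat{\theta}, \bm{\hat{x}}, \bm{\hat{y}}, \bar{\bm{\xi}}) \leq 0$. On the other hand $\bm{\xi}^m$ attains $\mathcal{S}(\theta^m, \bm{x}^m, \bm{y}^m)$, so by continuity of both $\mathcal{S}$ and $S$ we get $\mathcal{S}(\hat{\theta}, \bm{\hat{x}}, \bm{\hat{y}}) = \lim_m S(\theta^m, \bm{x}^m, \bm{y}^m, \bm{\xi}^m) = S(\hat{\theta}, \bm{\hat{x}}, \bm{\hat{y}}, \bar{\bm{\xi}}) \leq 0$. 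Hence every $\bm{\xi} \in \Xi$ is served by some policy that is feasible and attains objective value at most $\hat{\theta}$; that is, $(\bm{\hat{x}}, \bm{\hat{y}})$ is feasible in~\eqref{eq:k_adaptability} with worst-case value at most $\hat{\theta}$, so $\theta^\star \leq \hat{\theta}$.

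For the optimality part I would argue $\hat{\theta} \leq \theta^\star$. The delicate point---and the step I expect to be the main obstacle---is that $\mathcal{M}(\Xi_1, \ldots, \Xi_K)$ is a \emph{restriction} of the relaxed disjunctive program (it fixes which policy must serve which scenario) rather than a relaxation, so a single node's value $\theta^n$ need \emph{not} lower-bound $\theta^\star$ and one cannot simply take limits of $\theta^n$. To close the gap I would fall back on the branch-and-bound bookkeeping: a node on an infinite branch is never fathomed in Step~\ref{algorithm:master_problem}, so $\theta^n < \theta^{\text{i}}$ throughout, where $\theta^{\text{i}}$ is the prevailing (non-increasing) incumbent value, which satisfies $\theta^{\text{i}} \geq \theta^\star$ since it is the value of a feasible solution. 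It therefore suffices to show that the incumbent converges to $\theta^\star$. Here I would reuse the observation underlying the correctness argument (Theorem~\ref{prop:algorithm_correctness}): at every branching at least one disjunct accommodates the separated scenario for an optimal solution, so an optimal solution remains feasible along some root path and the minimum of $\theta^n$ over the active frontier never exceeds $\theta^\star$; applying the feasibility argument of the preceding paragraph to this ``optimal-admissible'' subtree then forces the incumbent down to $\theta^\star$. Combining $\theta^n < \theta^{\text{i}} \to \theta^\star$ with $\theta^n \uparrow \hat{\theta}$ yields $\hat{\theta} \leq \theta^\star$.

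Chaining the two inequalities gives $\hat{\theta} = \theta^\star$, and together with the feasibility conclusion this shows that the worst-case value of $(\bm{\hat{x}}, \bm{\hat{y}})$ equals $\theta^\star$; hence $(\bm{\hat{x}}, \bm{\hat{y}})$ is optimal for~\eqref{eq:k_adaptability} with objective value $\hat{\theta}$. I expect the continuity of $S$ (despite the discontinuity of the evaluation problem) and the persistence bookkeeping to be routine once set up, whereas the incumbent-to-$\theta^\star$ argument---ruling out infinite ``wrong-assignment'' branches whose scenario-based value exceeds $\theta^\star$---is the crux that must be handled carefully, since it is precisely the restriction (rather than relaxation) structure of~\eqref{eq:master_k_adaptability} that obstructs a direct lower bound.
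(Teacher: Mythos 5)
Your feasibility half is correct and is, in substance, the paper's own argument: joint continuity of $S$ as a finite min-max of bilinear functions, persistence of each separated scenario $\bm{\xi}^n$ in the scenario sets of all deeper nodes, and passage to the limit along convergent subsequences. (The paper runs this as a contradiction using only consecutive nodes, via $S(\theta^{\ell+1}, \bm{x}^{\ell+1}, \bm{y}^{\ell+1}, \bm{\xi}^{\ell}) \leq 0$; your direct double-limit variant reaches the same conclusion $\mathcal{S}(\hat{\theta}, \bm{\hat{x}}, \bm{\hat{y}}) \leq 0$.) Your optimality half, however, departs from the paper's proof and contains a genuine gap, at exactly the step you call the crux.

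You reduce optimality to the claim that the incumbent value $\theta^{\text{i}}$ converges to $\theta^\star$, and you justify that claim by following an optimal solution down the ``optimal-admissible'' subtree and ``applying the feasibility argument'' to it. This does not work, because the incumbent is updated \emph{only} in Step~\ref{algorithm:separation_problem}, i.e., only at a node whose separation value is nonpositive, and such a node is by construction a leaf: the algorithm returns to Step~\ref{algorithm:check_convergence} without branching. An infinite path therefore never updates the incumbent. In your case analysis, if some path of the optimal-admissible subtree terminates, the incumbent does become $\theta^\star$; but if \emph{every} such path is infinite---precisely the regime this theorem addresses---then no node of that subtree ever has separation value $\leq 0$, none is fathomed in Step~\ref{algorithm:separation_problem}, and the incumbent is never touched: the feasibility argument shows that the accumulation points of those paths are optimal, but it produces no incumbent whatsoever. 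Indeed, the incumbent can remain $+\infty$ for an entire infinite run, since Step~\ref{algorithm:node_selection} imposes no selection rule; in Example~\ref{ex:screw-up}, a run that keeps diving into the infinite branch never creates an incumbent, yet the theorem holds for that branch, whose accumulation value is the optimal value $1$. So the lemma you rely on, $\theta^{\text{i}} \to \theta^\star$, is false in general, and the sandwich $\theta^n < \theta^{\text{i}} \to \theta^\star$ collapses. The paper's proof is structured so that this lemma is never needed: assuming the accumulation point $(\hat{\theta}, \bm{\hat{x}}, \bm{\hat{y}})$ is feasible but suboptimal, it fixes a strictly better feasible solution $(\theta', \bm{x}', \bm{y}')$ with $\theta' < \hat{\theta}$ and distinguishes whether (i) it becomes the incumbent after finitely many iterations---then $\theta^\ell \to \hat{\theta} > \theta'$ forces the nodes of the branch under consideration to be fathomed in Step~\ref{algorithm:master_problem}, contradicting that the branch is infinite---or (ii) it arises only as the accumulation point of another infinite branch, which the paper dispatches by comparing the two value sequences $\theta^\ell \to \hat{\theta}$ and $\theta'^{,\ell} \to \theta'$. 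To repair your write-up you would need a contradiction argument of this type, conditioning on whether a better incumbent ever appears, or an added assumption on node selection---e.g., best-first selection, under which every processed node has value at most $\theta^\star$ because the optimal-admissible frontier always contains an open node of value at most $\theta^\star$---rather than the unconditional incumbent-convergence claim.
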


\begin{proof}
We denote by $(\theta^\ell, \bm{x}^\ell, \bm{y}^\ell)$ and $(\zeta^\ell, \bm{\xi}^\ell, \bm{z}^\ell)$ the sequences of optimal solutions to the scenario-based $K$-adaptability problem in Step~\ref{algorithm:master_problem} and the separation problem in Step~\ref{algorithm:separation_problem} of the algorithm, respectively, that correspond to the node sequence $\tau^\ell$, $\ell = 0, 1, \ldots$, of some infinite branch of the branch-and-bound tree. Since $\mathcal{X}$, $\mathcal{Y}$ and $\Xi$ are compact, the Bolzano-Weierstrass theorem implies that $(\theta^\ell, \bm{x}^\ell, \bm{y}^\ell)$ and $(\zeta^\ell, \bm{\xi}^\ell, \bm{z}^\ell)$ each have at least one accumulation point.

We first show that every accumulation point $(\hat{\theta}, \bm{\hat{x}}, \bm{\hat{y}})$ of the sequence $(\theta^\ell, \bm{x}^\ell, \bm{y}^\ell)$ corresponds to a \emph{feasible} solution $(\bm{\hat{x}}, \bm{\hat{y}})$ of the $K$-adaptability problem~\eqref{eq:k_adaptability} with objective value $\hat{\theta}$. By possibly going over to subsequences, we can without loss of generality assume that the two sequences $(\theta^\ell, \bm{x}^\ell, \bm{y}^\ell)$ and $(\zeta^\ell, \bm{\xi}^\ell, \bm{z}^\ell)$ converge themselves to $(\hat{\theta}, \bm{\hat{x}}, \bm{\hat{y}})$ and $(\hat{\zeta}, \bm{\hat{\xi}}, \bm{\hat{z}})$, respectively. Assume now that $(\bm{\hat{x}}, \bm{\hat{y}})$ does \emph{not} correspond to a feasible solution of the $K$-adaptability problem~\eqref{eq:k_adaptability} with objective value $\hat{\theta}$. Then there is $\bm{\xi}^\star \in \Xi$ such that
$S (\hat{\theta}, \bm{\hat{x}}, \bm{\hat{y}}, \bm{\xi}^\star) \geq \delta$ for some $\delta > 0$. By construction of the separation problem~\eqref{eq:separation}, this implies that
\begin{equation*}
S (\theta^\ell, \bm{x}^\ell, \bm{y}^\ell, \bm{\xi}^\ell)
\;\; = \;\;
\max_{\bm{\xi} \in \Xi} \; S (\theta^\ell, \bm{x}^\ell, \bm{y}^\ell, \bm{\xi})
\;\; \geq \;\;
S (\theta^\ell, \bm{x}^\ell, \bm{y}^\ell, \bm{\xi}^\star)
\;\; \geq \;\;
\delta / 2
\end{equation*}
for all $\ell$ sufficiently large. By taking limits and exploiting the continuity of $S$, we conclude that
\begin{equation*}
S (\hat{\theta}, \bm{\hat{x}}, \bm{\hat{y}}, \bm{\hat{\xi}})
\;\; \geq \;\;
S (\hat{\theta}, \bm{\hat{x}}, \bm{\hat{y}}, \bm{\xi}^\star)
\;\; \geq \;\;
\delta / 2.
\end{equation*}
Note, however, that $S (\theta^{\ell+1}, \bm{x}^{\ell+1}, \bm{y}^{\ell+1}, \bm{\xi}^\ell) \leq 0$ since $\bm{\xi}^\ell \in \Xi^{\ell + 1}_k$ for some $k \in \mathcal{K}$. Since the sequence $(\theta^{\ell + 1}, \bm{x}^{\ell + 1}, \bm{y}^{\ell + 1})$ also converges to $(\hat{\theta}, \bm{\hat{x}}, \bm{\hat{y}})$ and $\bm{\xi}^\ell$ converges to $\bm{\hat{\xi}}$, we thus conclude that $S (\hat{\theta}, \bm{\hat{x}}, \bm{\hat{y}}, \bm{\hat{\xi}}) \leq 0$, which yields the desired contradiction.

We now show that every accumulation point $(\hat{\theta}, \bm{\hat{x}}, \bm{\hat{y}})$ of the sequence $(\theta^\ell, \bm{x}^\ell, \bm{y}^\ell)$ corresponds to an \emph{optimal} solution $(\bm{\hat{x}}, \bm{\hat{y}})$ of the $K$-adaptability problem~\eqref{eq:k_adaptability} with objective value $\hat{\theta}$. Assume to the contrary that $(\hat{\theta}, \bm{\hat{x}}, \bm{\hat{y}})$ is feasible but suboptimal. Then there is a feasible solution $(\theta', \bm{x}', \bm{y}')$ with $\theta' < \hat{\theta}$ that either \emph{(i)} is used to update the incumbent solution after finitely many iterations, or \emph{(ii)} constitutes the accumulation point of another infinite sequence $(\theta'^{,\ell}, \bm{x}'^{,\ell}, \bm{y}'^{,\ell})$. In the first case, the objective values $\theta^\ell$ of the scenario-based $K$-adaptability problems will be arbitrarily close to $\hat{\theta}$ for $\ell$ sufficiently large, which implies that the corresponding nodes $\tau^\ell$ will be fathomed in Step~\ref{algorithm:master_problem}. Similarly, in the second case the objective values $\theta^\ell$ and $\theta'^{,\ell}$ of the scenario-based $K$-adaptability problems will be arbitrarily close to $\hat{\theta}$ and $\theta'$, respectively, for $\ell$ sufficiently large. Since $\theta' < \hat{\theta}$, the algorithm will fathom the tree nodes corresponding to the sequence $(\theta^\ell, \bm{x}^\ell, \bm{y}^\ell)$ in Step~\ref{algorithm:master_problem}. The result now follows since both cases contradict the assumption that $(\hat{\theta}, \bm{\hat{x}}, \bm{\hat{y}})$ is an accumulation point.
\end{proof}

Theorem~\ref{thm:algorithm_convergence} guarantees that after sufficiently many iterations of the algorithm, our scheme  generates feasible solutions that are close to an optimal solution of the $K$-adaptability problem~\eqref{eq:k_adaptability}. In general, our algorithm may not converge after finitely many iterations. In the following, we discuss a class of problem instances for which finite convergence is guaranteed.

\begin{thm}[Finite Convergence]\label{prop:algorithm_finite_time}
The branch-and-bound scheme terminates after finitely many iterations, if $\mathcal{Y}$ has finite cardinality and only the objective function in problem~\eqref{eq:k_adaptability} is uncertain.
\end{thm}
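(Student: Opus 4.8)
The plan is to show that under the stated hypotheses, the branch-and-bound tree cannot contain an infinite branch, so that, combined with the fact that each node spawns only finitely many children, König's lemma forces termination after finitely many iterations. The key structural observation is that when only the objective is uncertain (the ``OBJ'' case, so $\bm{T}$, $\bm{W}$ and $\bm{h}$ are deterministic) and $\mathcal{Y}$ is finite, the first-stage decision space $\mathcal{X} \times \mathcal{Y}^K$ effectively decouples the feasibility question from the uncertainty. First I would argue that the scenario-based problem~\eqref{eq:master_k_adaptability} at any node depends on the accumulated scenarios only through the \emph{objective constraints} $\bm{c}^\top \bm{x} + \bm{d}(\bm{\xi})^\top \bm{y}_k \le \theta$, since the constraint system $\bm{T}\bm{x} + \bm{W}\bm{y}_k \le \bm{h}$ is scenario-independent and can be enforced uniformly for all $k \in \mathcal{K}$.

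Next, I would exploit the finiteness of $\mathcal{Y}$. Because $\mathcal{Y}^K$ is a finite set, there are only finitely many distinct assignments of candidate policies $(\bm{y}_1, \ldots, \bm{y}_K)$, and hence only finitely many distinct \emph{second-stage value functions} $\bm{\xi} \mapsto \bm{c}^\top \bm{x} + \bm{d}(\bm{\xi})^\top \bm{y}_k$ up to the continuous choice of $\bm{x}$. The crucial point is that along any infinite branch, the separation step~\eqref{eq:separation} returns at each iteration a scenario $\bm{\xi}^\ell$ that is strictly violated, i.e.\ $\zeta^\ell > 0$, and then assigns it to one of the policy-specific scenario sets $\Xi_k$. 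I would show that because only finitely many policy vectors exist, an infinite branch must, after finitely many steps, repeatedly reuse some fixed policy vector $\bm{\hat{y}} \in \mathcal{Y}^K$ as the optimal second-stage assignment; the feasibility of $\bm{\hat{y}}$ (governed by the deterministic constraints) is then fixed, and the only quantity being refined is the worst-case objective $\sup_{\bm{\xi}} \min_k \bm{d}(\bm{\xi})^\top \bm{y}_k$.

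The main obstacle—and the step I would treat most carefully—is ruling out an infinite sequence of \emph{strictly} improving separating scenarios for a fixed policy vector. The idea is that once $\mathcal{Y}$ is finite, the scenario-based problem for a fixed $\bm{\hat{y}}$ reduces to optimizing the continuous variable $\bm{x}$ (and $\theta$) against a growing finite set of linear objective cuts indexed by $\bm{\xi} \in \bigcup_k \Xi_k$; I would argue that once enough scenarios have been accumulated, the worst-case realization over $\Xi$ is attained (since the inner $\min_k$ of affine functions is piecewise affine and its supremum over the bounded MILP-representable set $\Xi$ is attained at one of finitely many candidate vertices or breakpoints), so that the separation value $\zeta^\ell$ drops to at most $0$ and the branch is fathomed in Step~\ref{algorithm:separation_problem}. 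The delicate part is making the ``finitely many candidate realizations'' claim rigorous: I would rely on the fact that for each of the finitely many policy vectors, the function $\bm{\xi} \mapsto \min_k \bm{d}(\bm{\xi})^\top \bm{y}_k$ is a minimum of finitely many affine functions, so its maximizers over $\Xi$ lie in a finite union of faces determined by the arrangement of the hyperplanes $\{\bm{d}(\bm{\xi})^\top(\bm{y}_j - \bm{y}_k) = 0\}$ together with the MILP description of $\Xi$; consequently only finitely many distinct separating scenarios can ever be generated, and no branch can be infinite. Concluding, since the tree is finitely branching and has no infinite branch, it is finite, so the algorithm terminates.
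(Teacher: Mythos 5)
Your overall skeleton---finitely branching tree, hence by K\H{o}nig's lemma it suffices to rule out an infinite branch, plus a pigeonhole argument on the finitely many policy vectors in $\mathcal{Y}^K$ along such a branch---is exactly the structure of the paper's proof. The divergence, and the genuine gap, lies in how you rule out an infinite branch once a policy vector $\hat{\bm{y}}$ repeats. Your argument rests on the claim that ``only finitely many distinct separating scenarios can ever be generated,'' justified by the observation that the maximizers of $\bm{\xi} \mapsto \min_{k \in \mathcal{K}} \bm{d}(\bm{\xi})^\top \hat{\bm{y}}_k$ over $\Xi$ lie in a finite union of faces of a polyhedral arrangement. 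That inference is invalid: a face is in general a continuum, and Step~\ref{algorithm:separation_problem} of the algorithm only requires the oracle to return \emph{some} optimal solution of~\eqref{eq:separation}, not a vertex. Nothing prevents the oracle from returning infinitely many distinct points of the same optimal face over the course of the algorithm, so the set of scenarios that can be generated is not finite, and the chain ``finitely many faces $\Rightarrow$ finitely many scenarios $\Rightarrow$ finite branch'' breaks at its first link. (Its last link is fine and worth stating explicitly: any scenario with strictly positive separation value cannot already belong to $\bigcup_{k \in \mathcal{K}} \Xi_k$, since membership in some $\Xi_k$ together with feasibility in~\eqref{eq:master_k_adaptability} forces $S(\theta, \bm{x}, \bm{y}, \bm{\xi}) \leq 0$; hence all scenarios added along a branch are distinct.)

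The paper closes this hole with a \emph{value-based} rather than point-based argument, which is the missing idea. In the OBJ case the separation value decomposes as $\mathcal{S}(\theta, \bm{x}, \bm{y}) = \bm{c}^\top \bm{x} - \theta + \max_{\bm{\xi} \in \Xi} \min_{k \in \mathcal{K}} \bm{d}(\bm{\xi})^\top \bm{y}_k$, so the inner maximization---both its optimal value and its set of maximizers---depends on $\bm{y}$ alone, not on $(\theta, \bm{x})$. If $\bm{y}^{\ell_1} = \bm{y}^{\ell_2}$ with $\ell_1 < \ell_2$ on the same branch, then $\bm{\xi}^{\ell_1}$, \emph{whatever} optimal solution the oracle happened to return at node $\ell_1$, is still a maximizer of the separation objective at node $\ell_2$. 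Since $\bm{\xi}^{\ell_1} \in \Xi_{k}^{\ell_2}$ for some $k \in \mathcal{K}$, feasibility of $(\theta^{\ell_2}, \bm{x}^{\ell_2}, \bm{y}^{\ell_2})$ in the master problem~\eqref{eq:master_k_adaptability} gives $\bm{c}^\top \bm{x}^{\ell_2} + \bm{d}(\bm{\xi}^{\ell_1})^\top \bm{y}^{\ell_2}_{k} \leq \theta^{\ell_2}$, whence $\mathcal{S}(\theta^{\ell_2}, \bm{x}^{\ell_2}, \bm{y}^{\ell_2}) \leq 0$ and node $\ell_2$ is fathomed in Step~\ref{algorithm:separation_problem}---contradicting the infiniteness of the branch. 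This argument needs no control over which optimizer the separation oracle returns and no appeal to hyperplane arrangements. Alternatively, your route could be repaired by \emph{adding} the assumption that the oracle always returns extreme points of the finitely many polyhedral pieces involved, but that is an assumption the algorithm as stated does not make, so you should adopt the value-based argument instead.
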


\begin{proof}
If only the objective function in the $K$-adaptability problem~\eqref{eq:k_adaptability} is uncertain, then the corresponding semi-infinite disjunctive program~\eqref{eq:k_adaptability_semiinfinite} can be written as
\begin{equation*}
\begin{array}{l@{\quad}l@{\qquad}l}
\text{minimize} & \theta \\
\text{subject to} & \theta \in \mathbb{R}, \;\; \bm{x} \in \mathcal{X}, \;\; \bm{y} \in \mathcal{Y}^K \\
& \displaystyle \bm{T} \bm{x} + \bm{W} \bm{y}_k \leq \bm{h} & \displaystyle \forall k \in \mathcal{K} \\
& \displaystyle \bigvee_{k \in \mathcal{K}} \left[
\bm{c}^\top \bm{x} + \bm{d} (\bm{\xi})^\top \bm{y}_k \leq \theta
\right] & \displaystyle \forall \bm{\xi} \in \Xi,
\end{array}
\end{equation*}
see Observation~\ref{obs:semiinfinite_disjunctive_reformulation}. Thus, the scenario-based $K$-adaptability problem~\eqref{eq:master_k_adaptability} becomes
\begin{equation*}
\begin{array}{r@{}l@{\quad}l@{\qquad}l}
\mathcal{M} (\Xi_1, \ldots, \Xi_K) \; = \;\;\;
& \text{minimize} & \theta \\
& \text{subject to} & \theta \in \mathbb{R}, \;\; \bm{x} \in \mathcal{X}, \;\; \bm{y} \in \mathcal{Y}^K \\
&& \displaystyle \bm{T} \bm{x} + \bm{W} \bm{y}_k \leq \bm{h} & \displaystyle \forall k \in \mathcal{K} \\
&& \displaystyle \bm{c}^\top \bm{x} + \bm{d} (\bm{\xi})^\top \bm{y}_k \leq \theta & \displaystyle \forall \bm{\xi} \in \Xi_k, \; \forall k \in \mathcal{K},
\end{array}
\end{equation*}
and the separation problem~\eqref{eq:separation_explicit} can be written as
\begin{equation*}
\begin{aligned}
\mathcal{S}(\theta, \bm{x}, \bm{y}) \, &= \, \max_{\bm{\xi} \in \Xi} \; \min_{k \in \mathcal{K}} \left\{ \bm{c}^\top \bm{x} + \bm{d} (\bm{\xi})^\top \bm{y}_k - \theta \right\} \\
\, &= \,
\bm{c}^\top \bm{x} - \theta + \max_{\bm{\xi} \in \Xi} \; \min_{k \in \mathcal{K}} \left\{ \bm{d} (\bm{\xi})^\top \bm{y}_k \right\}.
\end{aligned}
\end{equation*}

We now show that if $\mathcal{Y}$ has finite cardinality, then our branch-and-bound algorithm terminates after finitely many iterations. To this end, assume that this is not the case, and let $\tau^\ell$, $\ell = 0, 1, \ldots$ be some rooted branch of the tree with infinite length. We denote by $(\theta^\ell, \bm{x}^\ell, \bm{y}^\ell)$ and $(\zeta^\ell, \bm{\xi}^\ell, \bm{z}^\ell)$ the corresponding sequences of optimal solutions to the master and the separation problem, respectively. Since $\mathcal{Y}$ has finite cardinality, we must have $\bm{y}^{\ell_1} = \bm{y}^{\ell_2}$ for some $\ell_1 < \ell_2$.

The solution $(\theta^{\ell_2}, \bm{x}^{\ell_2}, \bm{y}^{\ell_2})$ satisfies $\mathcal{S} (\theta^{\ell_2}, \bm{x}^{\ell_2}, \bm{y}^{\ell_2}) > 0$ since $\tau^\ell$, $\ell = 0, 1, \ldots$, is a branch of infinite length. Since $\bm{y}^{\ell_2} = \bm{y}^{\ell_1}$, we thus conclude that
\begin{equation*}
\bm{c}^\top \bm{x}^{\ell_2} -\theta^{\ell_2} + \max_{\bm{\xi} \in \Xi} \; \min_{k \in \mathcal{K}} \; \left\{ \bm{d} (\bm{\xi})^\top \bm{y}_k^{\ell_1} \right\}
\; > \; 0.
\end{equation*}
Since $\bm{\xi}^{\ell_1}$ is optimal in the separation problem $\mathcal{S} (\theta^{\ell_1}, \bm{x}^{\ell_1}, \bm{y}^{\ell_1})$ and $S (\theta^{\ell_1}, \bm{x}^{\ell_1}, \bm{y}^{\ell_1}, \bm{\xi}^{\ell_1}) > 0$, we have
\begin{equation*}
\bm{c}^\top \bm{x}^{\ell_2} -\theta^{\ell_2} + \min_{k \in \mathcal{K}} \; \left\{ \bm{d} (\bm{\xi}^{\ell_1})^\top \bm{y}_k^{\ell_1} \right\}
\; = \;
\bm{c}^\top \bm{x}^{\ell_2} -\theta^{\ell_2} + \max_{\bm{\xi} \in \Xi} \; \min_{k \in \mathcal{K}} \; \left\{ \bm{d} (\bm{\xi})^\top \bm{y}_k^{\ell_1} \right\}
\; > \;  0.
\end{equation*}
However, since the node $\tau^{\ell_2} = (\Xi_1^{\ell_2}, \ldots, \Xi_K^{\ell_2})$ is a descendant of the node $\tau^{\ell_1} = (\Xi_1^{\ell_1}, \ldots, \Xi_K^{\ell_1})$, we must have $\bm{\xi}^{\ell_1} \in \Xi^{\ell_2}_k$ for some $k \in \mathcal{K}$. This, along with the fact that $(\theta^{\ell_2}, \bm{x}^{\ell_2}, \bm{y}^{\ell_2})$ is a feasible solution to the master problem $\mathcal{M} (\Xi_1^{\ell_2}, \ldots, \Xi_K^{\ell_2})$ and that $\bm{y}^{\ell_2} = \bm{y}^{\ell_1}$, implies that
\begin{equation*}
\bm{c}^\top \bm{x}^{\ell_2} -\theta^{\ell_2} + \min_{k \in \mathcal{K}} \; \left\{ \bm{d} (\bm{\xi}^{\ell_1})^\top \bm{y}_k^{\ell_1} \right\}
\; \leq \; 0.
\end{equation*}
This yields the desired contradiction and proves the theorem.
\end{proof}

We note that the assumption of deterministic constraints is critical in the previous statement.

\begin{ex}\label{ex:screw-up}
Consider the following instance of the $K$-adaptability problem~\eqref{eq:k_adaptability}:
\begin{equation*}
\inf_{y_1, y_2 \in \{ 0, 1\}} \; \sup_{\xi \in [0,1]} \; \inf_{k \in \{ 1, 2 \}}
\left\{ (\xi - 1) (1 - 2 y_k) \, : \, y_k \geq \xi \right\}
\end{equation*}
On this instance, our branch-and-bound algorithm generates a tree in which all branches have finite length, except (up to permutations) the sequence of nodes $\tau^\ell = (\Xi_1^\ell, \Xi_2^\ell)$, where $(\Xi_1^0, \Xi_2^0) = (\emptyset, \emptyset)$ and $(\Xi_1^\ell, \Xi_2^\ell) = \left( \big\{ \xi^0 2^{-i} \, : \, i = 0, 1, \ldots, \ell-1 \big\}, \{0\} \right)$, $\ell > 0$, for some $\xi^0 \in (0, 1]$.
For the node $\tau^\ell$, $\ell > 1$, the optimal solution of the scenario-based $K$-adaptability problem~\eqref{eq:master_k_adaptability} is $(\theta^\ell, y_1^\ell, y_2^\ell) = (1 - \xi^0 2^{-\ell+1}, 1, 0)$, while the optimal solution of the separation problem is $(\zeta^\ell, \xi^\ell) = (\xi^0 2^{-\ell}, \xi^0 2^{-\ell})$. Thus, our branch-and-bound algorithm does not terminate after finitely many iterations.
\end{ex}

We note that every practical implementation of our branch-and-bound scheme will fathom nodes in Step~\ref{algorithm:separation_problem} whenever the objective value of the separation problem~\eqref{eq:separation_explicit} is sufficiently close to zero (within some $\epsilon$-tolerance). This ensures that the algorithm terminates in finite time in practice. Indeed, in Example~\ref{ex:screw-up} the objective value of the separation problem is less than $\epsilon$ for all nodes $\tau^\ell$ with $\ell \geq \log_2 (\xi^0  \epsilon^{-1})$, and our branch-and-bound algorithm will fathom the corresponding path of the tree after $\mathcal{O} (\log \epsilon^{-1})$ iterations if we seek $\epsilon$-precision solutions.

\subsection{Improvements to the Basic Algorithm}\label{sec:bab_extensions}
The algorithm of Section~\ref{sec:bab_algorithm} serves as a blueprint that can be extended in multiple ways. In the following, we discuss three enhancements that improve the numerical performance of our algorithm.

\paragraph{Breaking Symmetry.}
For any feasible solution $(\bm{x}, \bm{y})$ of the $K$-adaptability problem~\eqref{eq:k_adaptability}, every solution $(\bm{x}, \bm{y}')$, where $\bm{y}'$ is one of the $K!$ permutations of the second-stage policies $(\bm{y}_1, \ldots, \bm{y}_K)$, is also feasible in~\eqref{eq:k_adaptability} and attains the same objective value. This implies that our branch-and-bound tree is highly isomorphic since the scenario-based problems~\eqref{eq:master_k_adaptability} and~\eqref{eq:separation} are identical (up to a permutation of the policies) across many nodes. We can reduce this undesirable symmetry by modifying Step~\ref{algorithm:branching} of our branch-and-bound scheme as follows:
\begin{enumerate}
\item[\ref{algorithm:branching}$'$.] \textit{Branch.} Let $K' = 1$ if $\Xi_1 = \ldots = \Xi_K = \emptyset$ and let $K' = \min \Big\{K, \, 1 + \max\limits_{k \in \mathcal{K}} \big\{k : \Xi_k \neq \emptyset \big\} \Big\}$ otherwise. Instantiate $K'$ new nodes $\tau_k = (\Xi_1, \ldots, \Xi_k \cup \{ \bm{\xi} \}, \ldots, \Xi_K)$, $k = 1, \ldots, K'$. Set $\mathcal{N} \gets \mathcal{N} \cup \{\tau_1, \ldots, \tau_{K'}\}$ and go to Step~\ref{algorithm:node_selection}.
\end{enumerate}
Despite generating only a subset of the nodes that our original algorithm constructs, the modification above always maintains at least one of the $K!$ solutions symmetric to every feasible solution.

\paragraph{Integration into MILP Solvers.}
Step~\ref{algorithm:master_problem} of our algorithm solves the scenario-based problem~\eqref{eq:master_k_adaptability} from scratch in every node, despite the fact that two successive problems along any branch of the branch-and-bound tree
differ only by the addition of a few constraints. We can leverage this commonality if we integrate our branch-and-bound algorithm into the solution scheme of the MILP solver used for problem~\eqref{eq:master_k_adaptability}. In doing so, we can also exploit the advanced facilities commonly present in the state-of-the-art solvers such as warm-starts and cutting planes, among others. 

In order to integrate our branch-and-bound algorithm into the solution scheme of the MILP solver, we initialize the solver with the scenario-based problem~\eqref{eq:master_k_adaptability} corresponding to the root node $\tau^0$ of our algorithm, see Step~\ref{algorithm:initialize}. The solver then proceeds to solve this problem using its own branch-and-bound procedure. Whenever the solver encounters an \emph{integral solution} $(\theta, \bm{x}, \bm{y}) \in \mathbb{R} \times \mathcal{X} \times \mathcal{Y}^K$, we solve the associated separation problem~\eqref{eq:separation}. If $\mathcal{S}(\theta, \bm{x}, \bm{y}) > 0$, then we execute Step~\ref{algorithm:branching} of our algorithm through a \emph{branch callback}: we report the $K$ new branches to the solver, which will discard the current solution. Otherwise, if $\mathcal{S}(\theta, \bm{x}, \bm{y}) \leq 0$, then we do not create any new branches, and the solver will accept $(\theta, \bm{x}, \bm{y})$ as the new incumbent solution. This ensures that only those solutions which are feasible in problem~\eqref{eq:k_adaptability_semiinfinite} are accepted as incumbent solutions.

Whenever the solver encounters a \emph{fractional solution}, it will by default branch on an integer variable that is fractional in the current solution. However, if $\mathcal{S} (\theta, \bm{x}, \bm{y}) > 0$, it is possible to override this strategy and instead execute Step~\ref{algorithm:branching} of our algorithm. In such cases, a heuristic rule can be used to decide whether to branch on integer variables or to branch as in Step~\ref{algorithm:branching}. In our computational experience, a simple rule that alternates between the default branching rule of the solver and the one defined by Step~\ref{algorithm:branching} appears to perform well in practice.

\subsection{Modification as a Heuristic Algorithm}\label{sec:bab_heuristic}
Whenever the number of policies $K$ is large, the solution of the scenario-based $K$-adaptability problem~\eqref{eq:master_k_adaptability} 
can be time consuming. In such cases, only a limited number of nodes will be explored by the algorithm in a given amount of computation time, and the quality of the final incumbent solution may be poor. As a remedy, we can reduce the size and complexity of the scenario-based $K$-adaptability problem~\eqref{eq:master_k_adaptability} by fixing some of its second-stage policies. In doing so, we obtain a heuristic variant of our algorithm that can scale to large values of $K$.

In our computational experience, a simple heuristic that {sequentially solves the $1$-, $2$-, \ldots, $K$-adaptability problems by fixing in each $K$-adaptability problem} all but one of the second-stage policies, $\bm{y}_1, \ldots, \bm{y}_{K-1}$, to their corresponding values in the $(K-1)$-adaptability problem, performs well in practice. This heuristic is motivated by two observations. First, the resulting scenario-based $K$-adaptability problems~\eqref{eq:master_k_adaptability} have the same size and complexity as the corresponding scenario-based $1$-adaptability problems. Second, in our experiments on instances with uncertain objective coefficients $\bm{d}$, we often found that some optimal second-stage policies of the $(K-1)$-adaptability problem also appear in the optimal solution of the $K$-adaptability problem. In fact, it can be shown that this heuristic can obtain $K$-adaptable solutions that improve upon $1$-adaptable solutions only if the objective coefficients $\bm{d}$ are affected by uncertainty.

\section{Numerical Results}\label{sec:num_results}
We now analyze the computational performance of our branch-and-bound scheme in a variety of problem instances from the literature. We consider a shortest path problem with uncertain arc weights (Section~\ref{sec:num_results_shortest_path}), a capital budgeting problem with uncertain cash flows (Section~\ref{sec:num_results_capital_budgeting}), a variant of the capital budgeting problem with the additional option to take loans (Section~\ref{sec:num_results_capital_budgeting_loans}), a project management problem with uncertain task durations (Section~\ref{sec:num_results_project_management}), and a vehicle routing problem with uncertain travel times (Section~\ref{sec:num_results_vehicle_routing}). Of these, the first two problems involve only binary decisions, and they can therefore also be solved with the approach described in~\cite{HKW15:rip}. In these cases, we show that our solution scheme is highly competitive, and it frequently outperforms the approach of~\cite{HKW15:rip}. In contrast, the third and fourth problems also involve continuous decisions, and there is no existing solution approach for their associated $K$-adaptability problems. However, the project management problem from Section~\ref{sec:num_results_project_management} involves \emph{only} continuous second-stage decisions, and therefore the corresponding two-stage robust optimization problem~\eqref{eq:two_stage_ro} can also be approximated using affine decision rules~\cite{BTGGN04:adjustable}, which represent the most popular approach for such problems. In this case, we elucidate the benefits of $K$-adaptable constant and affine decisions over standard affine decision rules.
Finally, the first and last problems involve only binary second-stage decisions and deterministic constraints, and they can therefore also be addressed with the heuristic approach described in~\cite{BK16:min_max_max_MP}. In these cases, we show that the heuristic variant of our algorithm often outperforms the latter approach in terms of solution quality.

For each problem category, we investigate the tradeoffs between computational effort and improvement in objective value of the $K$-adaptability problem for increasing values of $K$. We demonstrate that \emph{(i)} the $K$-adaptability problem can provide significant improvements over static robust optimization (which corresponds to the case $K=1$), and that \emph{(ii)} our solution scheme can quickly determine feasible solutions of high quality.

We implemented our branch-and-bound algorithm in C++ using the C~callable library of CPLEX~12.7~\cite{CPLEX}. 
We used a constraint feasibility tolerance of $\epsilon = 10^{-4}$ to accept any incumbent solutions, whereas all other solver options were kept at their default values. The experiments were conducted on a single core of an Intel~Xeon~2.8GHz computer with 16GB~RAM. 

\subsection{Shortest Paths}\label{sec:num_results_shortest_path}
We consider the shortest path problem from~\cite{HKW15:rip}. Let $G = (V, A)$ be a directed graph with nodes $V = \{1, \ldots, N\}$, arcs $A \subseteq V \times V$ and arc weights $d_{ij}(\bm{\xi}) = (1 + \xi_{ij}/2) d_{ij}^0$, $(i, j) \in A$. Here, $d_{ij}^0 \in \mathbb{R}_{+}$ represents the nominal weight of the arc $(i, j) \in A$ and $\xi_{ij}$ denotes the uncertain deviation from the nominal weight. The realizations of the uncertain vector $\bm{\xi}$ are known to belong to the set
\begin{equation*}
\Xi = \left\{ \bm{\xi} \in [0,1]^{\lvert A \rvert}: \sum_{(i,j)\in A} \xi_{ij} \leq \Gamma \right\},
\end{equation*}
which stipulates that at most $\Gamma$ arc weights may maximally deviate from their nominal values.

Let $s \in V$ and $t \in V$, $s \neq t$, denote the source and terminal nodes of $G$, respectively. The decision-maker aims to choose $K$ paths from $s$ to $t$ here-and-now, i.e., before observing the actual arc weights, such that the worst-case weight of the shortest among the chosen paths is minimized. This problem can be formulated as an instance of the $K$-adaptability problem~\eqref{eq:k_adaptability}:
\begin{equation*}
\inf_{\bm{y} \in \mathcal{Y}^K} \; \sup_{\bm{\xi} \in \Xi} \; \inf_{k \in \mathcal{K}} \;
\bm{d} (\bm{\xi})^\top \bm{y}_k
\end{equation*}
Here, $\mathcal{Y}$ denotes the set of all $s-t$ paths in $G$; that is,
\begin{equation*}
\mathcal{Y} = \left\{ \bm{y} \in \{0, 1\}^{\lvert A \rvert} : \sum_{(j,l) \in A} y_{jl} - \sum_{(i,j) \in A} y_{ij} \geq \mathbb{I}[j=s] - \mathbb{I}[j=t] \;\; \forall \, j \in V \right\}.
\end{equation*}
Note that this problem only contains second-stage decisions and as such, the corresponding two-stage robust optimization problem~\eqref{eq:two_stage_ro} may be of limited interest in practice. Nevertheless, the $K$-adaptability problem~\eqref{eq:k_adaptability} has important applications in logistics and disaster relief~\cite{HKW15:rip}.

For each graph size $N \in \{20, 25, \ldots, 50\}$, we randomly generate 100 problem instances as follows. We assign the coordinates $(u_i, v_i) \in \mathbb{R}^2$ to each node $i \in V$ uniformly at random from the square $[0, 10]^2$. The nominal weight of the arc $(i,j) \in A$ is defined to be the Euclidean distance between the nodes $i$ and $j$; that is, $d_{ij}^0 = \sqrt{(u_i - u_j)^2 + (v_i - v_j)^2}$. The source node $s$ and the terminal node $t$ are defined to be the nodes with the maximum Euclidean distance between them. The arc set $A$ is obtained by removing from the set of all pairwise links the $\lfloor 0.7  (N^2 - N) \rfloor$ connections with the largest nominal weights. We set the uncertainty budget to $\Gamma = 3$. Further details on the parameter settings can be found in~\cite{HKW15:rip}.

Table~\ref{table:num_results_shortest_path} summarizes the numerical performance of our branch-and-bound scheme for $K \in \{2, 3, 4\}$. Table~\ref{table:num_results_shortest_path} indicates that our scheme is able to reliably compute optimal solutions for small values of $N$ and $K$, while the average optimality gap for large values of $N$ and $K$ is less than 9\%.
The numerical performance is strongly affected by the value of $K$; very few of the $4$-adaptable instances are solved to optimality within the time limit. This decrease in tractability is partly explained in Figure~\ref{figure:spp_improvement}, which shows the improvement in objective value of the $K$-adaptability problem over the static problem (where $K=1$). Figure~\ref{figure:spp_improvement_overall} shows that the computed $4$-adaptable solutions are typically of high quality since they improve upon the static solutions by as much as 13\% for large values of $N$. Moreover, Figure~\ref{figure:spp_improvement_profile} shows that these solutions are obtained within 1 minute~(on average), even for the largest instances. This indicates that the gaps in Table~\ref{table:num_results_shortest_path} are likely to be very conservative since the majority of computation time is spent on obtaining a certificate of optimality for these solutions.

\begin{table}[!htb]
  \centering
  \caption{Results for the shortest path problem. For each value of $K$, the ``Opt (\#)'' column reports the number of instances (out of 100) which were solved to optimality, while the ``Time (s)'' column reports the average time to solve these instances to optimality. For those instances which could not be solved to optimality within the time limit of 7,200s, the average gap $\lvert (\text{ub} - \text{lb})/ \text{ub} \rvert \times 100\%$ between the global lower bound (lb) and global upper bound (ub) of the branch-and-bound tree is reported in the ``Gap (\%)'' column.}
    \begin{tabularx}{\textwidth}{cRRRRRRRRR}
    \toprule
          & \multicolumn{3}{c}{$K = 2$} & \multicolumn{3}{c}{$K = 3$} & \multicolumn{3}{c}{$K = 4$} \\
    \cmidrule(lr){2-4} \cmidrule(lr){5-7} \cmidrule(l){8-10}
    $N$     & Opt (\#) & Time (s) & Gap (\%) & Opt (\#) & Time (s) & Gap (\%) & Opt (\#) & Time (s) & Gap (\%) \\
    \midrule
    20    & 99    & 6     & 1.23  & 97    & 408   & 2.51  & 70    & 539   & 1.74 \\
    25    & 91    & 222   & 4.14  & 64    & 847   & 2.91  & 33    & 885   & 2.89 \\
    30    & 64    & 744   & 4.40  & 31    & 1,237 & 4.10  & 16    & 827   & 4.27 \\
    35    & 37    & 1,083 & 5.36  & 14    & 1,020 & 5.01  & 10    & 896   & 5.23 \\
    40    & 10    & 808   & 6.28  & 6     & 1,670 & 6.43  & 2     & 39    & 6.10 \\
    45    & 9     & 1,152 & 7.70  & 1     & 16    & 7.06  & 1     & 15    & 6.61 \\
    50    & 2     & 3,307 & 8.55  & 1     & 2,308 & 7.90  & 0     & --    & 7.10 \\
    \bottomrule
    \end{tabularx}%
  \label{table:num_results_shortest_path}%
\end{table}%

\begin{figure}[!htb]
\captionsetup[subfigure]{belowskip=0pt}
\centering
\begin{subfigure}[b]{0.48\textwidth}
\includegraphics[width=\textwidth]{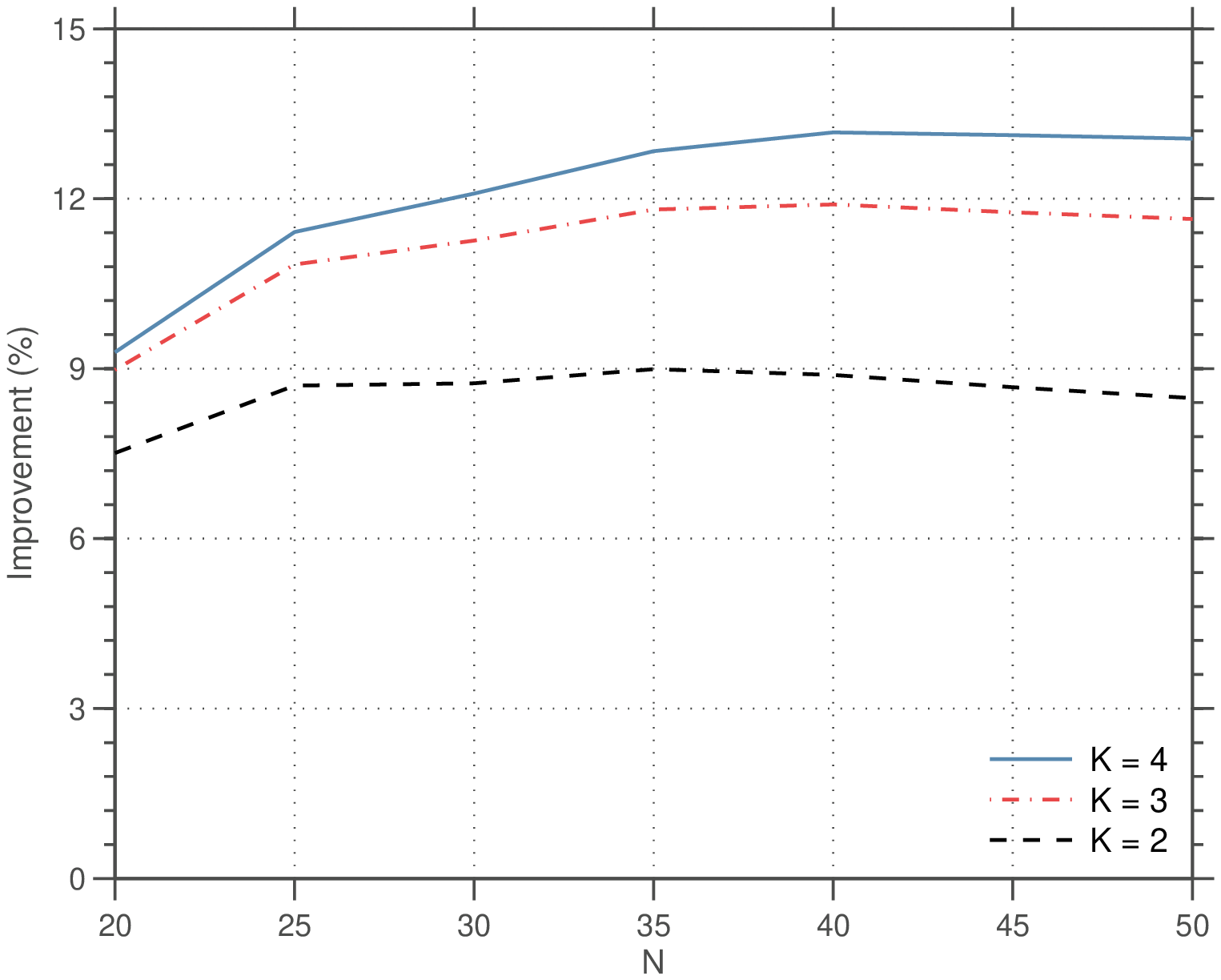}
\caption{}
\label{figure:spp_improvement_overall}
\end{subfigure}~%
\begin{subfigure}[b]{0.48\textwidth}
\includegraphics[width=\textwidth]{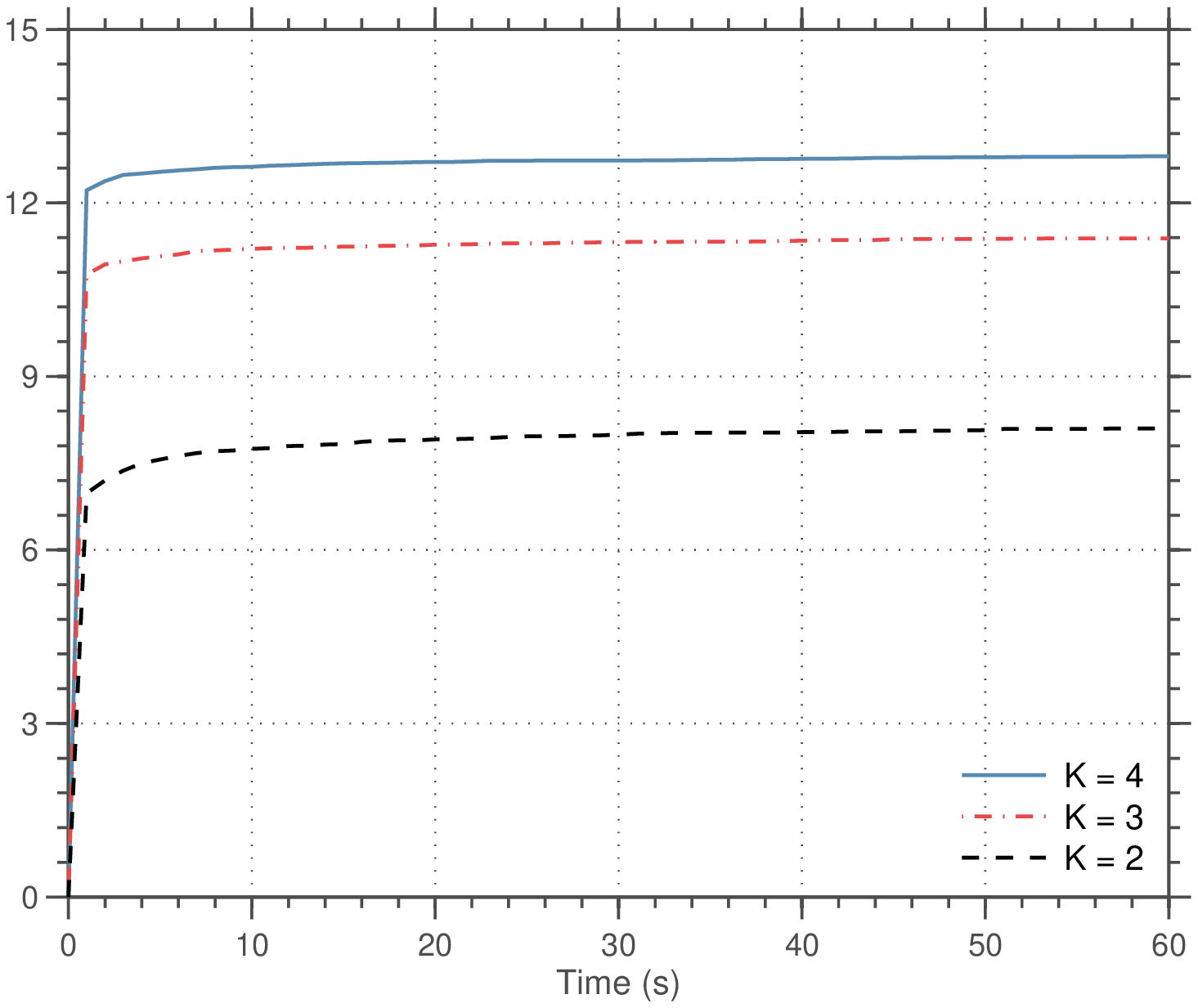}
\caption{}
\label{figure:spp_improvement_profile}
\end{subfigure}
\caption{Results for the shortest path problem using the exact algorithm. The graphs show the average improvement $\lvert(\theta_1 - \theta_K)/\theta_1 \rvert \times 100\%$ of the objective value of the $K$-adaptability problem ($\theta_K$) over the static problem ($\theta_1$). The left graph shows the improvement after 2~hours (for increasing $N$), while the right graph shows the time profile of the improvement of the incumbent solution in the first 60~seconds (for $N = 50$).}
\label{figure:spp_improvement}
\end{figure}

Figure~\ref{figure:spp_improvement_heuristic} illustrates the quality of the solutions obtained using the heuristic variant of our algorithm, described in Section~\ref{sec:bab_extensions}, and contrasts it with the quality of the solutions obtained using the heuristic algorithm described in~\cite{BK16:min_max_max_MP}. Figure~\ref{figure:spp_improvement_heuristic} shows that, after just one minute of computation time, the $2$-, $3$- and $4$-adaptable solutions obtained using our heuristic algorithm are within 0.3\% of known optimal solutions and about 2\% better than those obtained using the heuristic algorithm described in~\cite{BK16:min_max_max_MP}, on average.
The differences in the qualities of the $6$-, $8$- and $10$-adaptable solutions are smaller.
The figure also shows that the marginal gain in objective value decreases rapidly as we increase the number of policies $K$. Indeed, while the $2$-adaptable solutions are about 8.3\% better than the $1$-adaptable (i.e., static) solutions, the $10$-adaptable solutions are only about 0.1\% better than the $8$-adaptable solutions.
This may be explained by the possibility that the objective values of the corresponding $K$-adaptable solutions are very close to the optimal value of the two-stage robust optimization problem~\eqref{eq:two_stage_ro}.

\begin{figure}[!htb]
\captionsetup[subfigure]{belowskip=0pt}
\centering
\begin{subfigure}[b]{0.48\textwidth}
\includegraphics[width=\textwidth]{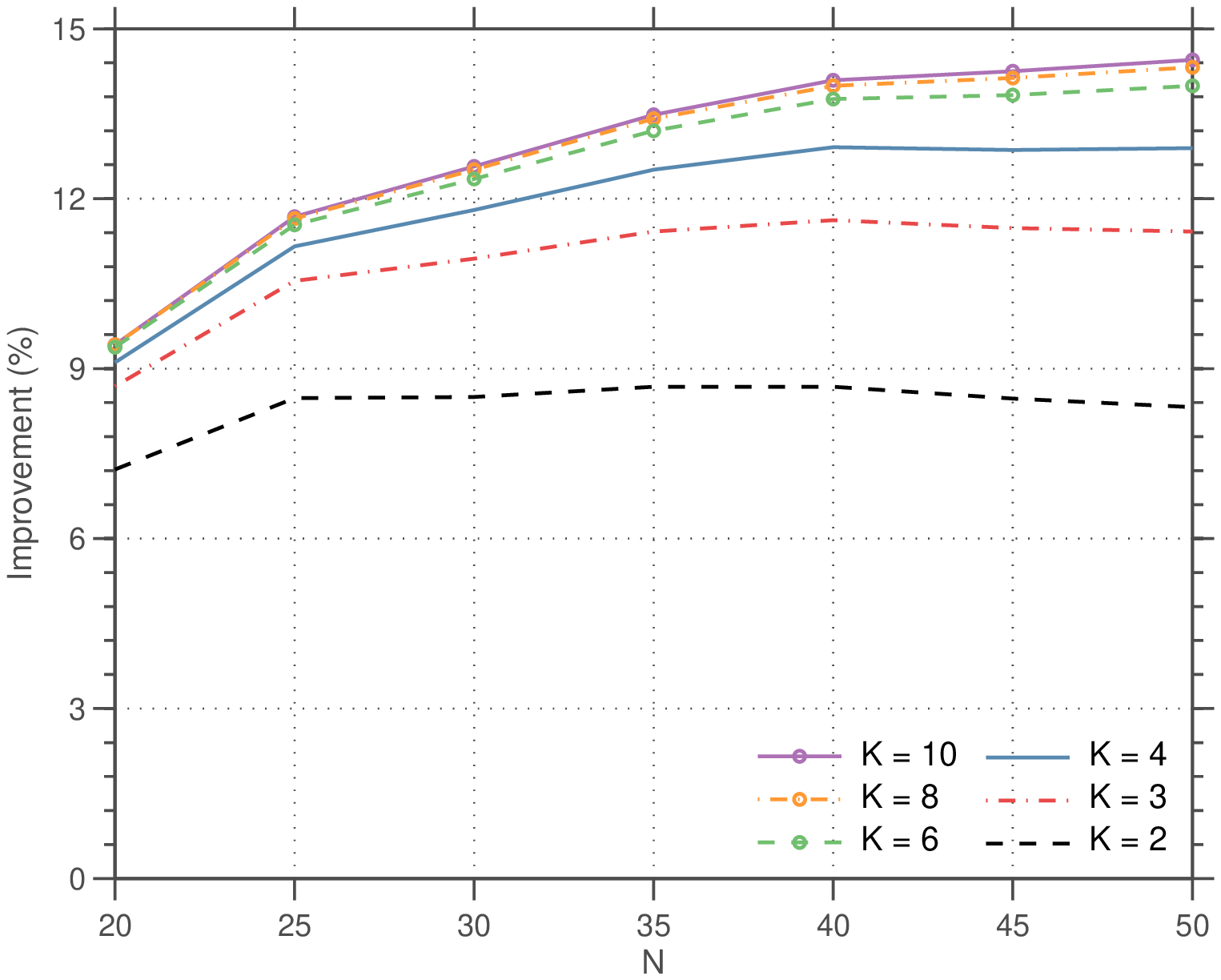}
\caption{}
\label{figure:spp_improvement_heuristic_bab}
\end{subfigure}~%
\begin{subfigure}[b]{0.48\textwidth}
\includegraphics[width=\textwidth]{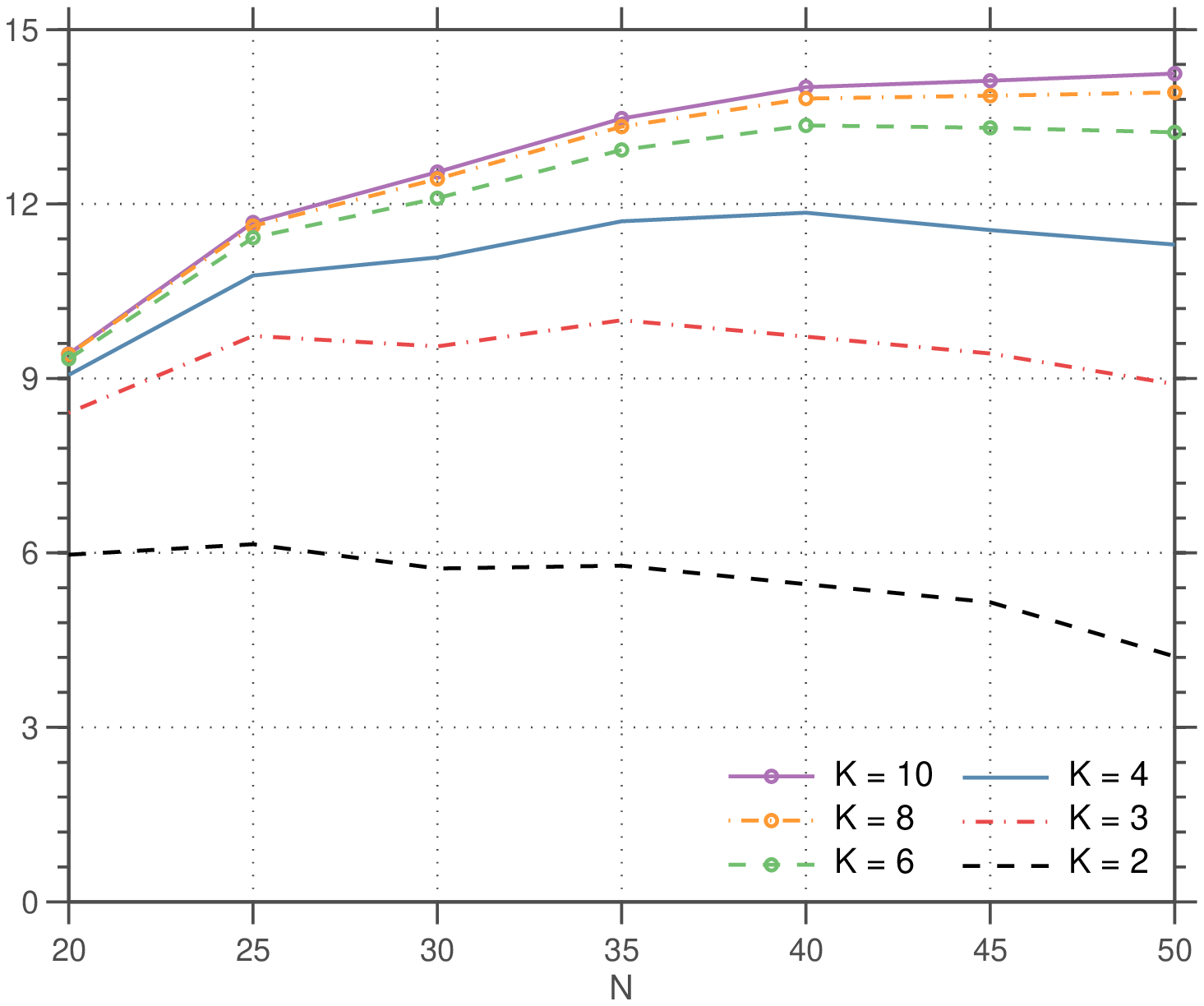}
\caption{}
\label{figure:spp_improvement_heuristic_mmm}
\end{subfigure}
\caption{Results for the shortest path problem using the heuristic algorithm. The graphs show the average improvement after 1~minute obtained using the heuristic variant of our algorithm (left) and the heuristic algorithm described in~\cite{BK16:min_max_max_MP} (right).} 
\label{figure:spp_improvement_heuristic}
\end{figure}

\subsection{Capital Budgeting}\label{sec:num_results_capital_budgeting}
We consider the capital budgeting problem from~\cite{HKW15:rip}, where a company wishes to invest in a subset of $N$ projects. Each project $i$ has an uncertain cost $c_i (\bm{\xi})$ and an uncertain profit $r_i (\bm{\xi})$ that are governed by factor models of the form
\begin{equation*}
c_i (\bm{\xi}) = \left(1 + \bm{\Phi}_i^\top \bm{\xi} / 2 \right) c_i^0 \;\; \text{ and } \;\; r_i (\bm{\xi}) = \left(1 + \bm{\Psi}_i^\top \bm{\xi} / 2 \right) r_i^0 \;\;\;\; \text{for} \; i = 1, \ldots, N.
\end{equation*}
In these models, $c_i^0$ and $r_i^0$ represent the nominal cost and the nominal profit of project $i$, respectively, while $\bm{\Phi}_i^\top$ and $\bm{\Psi}_i^\top$ represent the $i^\text{th}$ row vectors of the factor loading matrices $\bm{\Phi}, \bm{\Psi} \in \mathbb{R}^{N \times 4}$. The realizations of the uncertain vector of risk factors $\bm{\xi}$ belong to the uncertainty set $\Xi = [-1, 1]^4$.

The company can invest in a project either before or after observing the risk factors $\bm{\xi}$. In the latter case, the company generates only a fraction $\kappa$ of the profit (reflecting a penalty for postponement) but incurs the same cost as in the case of an early investment. Given an investment budget $B$, the capital budgeting problem can then be formulated as the following instance of the two-stage robust optimization problem~\eqref{eq:two_stage_ro}:
\begin{equation*}
\adjustlimits\sup_{\bm{x} \in \mathcal{X} \; } \inf_{\bm{\xi} \in \Xi \; } \sup_{\bm{y} \in \mathcal{Y}}
\left\{ \bm{r}(\bm{\xi})^\top (\bm{x} + \kappa \bm{y}) \, : \, \bm{c} (\bm{\xi})^\top (\bm{x} + \bm{y}) \leq B, \; \bm{x} + \bm{y} \leq \mathbf{e} \right\},
\end{equation*}
where $\mathcal{X} = \mathcal{Y} = \{0, 1\}^N$.

For our numerical experiments, we randomly generate 100 instances for each problem size $N \in \{5, 10, \ldots, 30\}$ as follows. The nominal costs $\bm{c}^0$ are chosen uniformly at random from the hyperrectangle $[0, 10]^N$. We then set $\bm{r}^0 = \bm{c}^0 / 5$, $B = \mathbf{e}^\top \bm{c}^0 / 2$ and $\kappa = 0.8$. The rows of the factor loading matrices $\bm{\Phi}$ and $\bm{\Psi}$ are sampled uniformly from the unit simplex in $\mathbb{R}^4$; that is, the $i^\text{th}$ row vector is sampled from $[0, 1]^4$ such that $\bm{\Phi}_{i}^\top \mathbf{e} = \bm{\Psi}_{i}^\top \mathbf{e} = 1$ is satisfied for all $i = 1, \ldots, N$.

Table~\ref{table:num_results_capital_budgeting} summarizes the numerical performance of our branch-and-bound scheme for $K \in \{2, 3, 4\}$. 
Table~\ref{table:num_results_capital_budgeting} demonstrates that our branch-and-bound scheme performs very well for this problem class since the majority of instances is solved to optimality for $K \in \{2, 3\}$. Moreover, the optimality gaps for the unsolved instances are less than 4\% for $K \in \{2, 3\}$ and less than 9\% for $K = 4$ on average. Additionally, Figure~\ref{figure:capital_budgeting_improvement} shows that even for the largest instances, high-quality incumbent solutions which significantly improve ($\approx$100\%) upon the static robust solutions are obtained within 1~minute of computation time. Our results compare favorably with those of~\cite{HKW15:rip} as well as those of the partition-and-bound approach for the corresponding two-stage robust optimization problem presented in~\cite{BD15:multistage_robust_mio}.

\begin{table}[!htb]
  \centering
  \caption{Results for the capital budgeting problem. The columns have the same interpretation as in Table~\ref{table:num_results_shortest_path}.}
    \begin{tabularx}{\textwidth}{cRRRRRRRRR}
    \toprule
          & \multicolumn{3}{c}{$K = 2$} & \multicolumn{3}{c}{$K = 3$} & \multicolumn{3}{c}{$K = 4$} \\
    \cmidrule(lr){2-4} \cmidrule(lr){5-7} \cmidrule(l){8-10}
    $N$     & Opt (\#) & Time (s) & Gap (\%) & Opt (\#) & Time (s) & Gap (\%) & Opt (\#) & Time (s) & Gap (\%) \\
    \midrule
    5     & 100   & 1     & --    & 100   & 1     & --    & 100   & 3     & -- \\
    10    & 100   & 1     & --    & 100   & 16    & --    & 100   & 149   & -- \\
    15    & 100   & 10    & --    & 99    & 566   & 0.33  & 69    & 2,245 & 1.42 \\
    20    & 100   & 419   & --    & 34    & 2,787 & 1.65  & 5     & 3,710 & 4.02 \\
    25    & 29    & 2,238 & 1.12  & 4     & 2,281 & 2.63  & 0     & --    & 6.22 \\
    30    & 1     & 188   & 3.01  & 1     & 6,687 & 3.35  & 0     & --    & 8.27 \\
    \bottomrule
    \end{tabularx}%
  \label{table:num_results_capital_budgeting}%
\end{table}%

\begin{figure}[!htb]
\captionsetup[subfigure]{belowskip=0pt}
\centering
\begin{subfigure}[b]{0.48\textwidth}
\includegraphics[width=\textwidth]{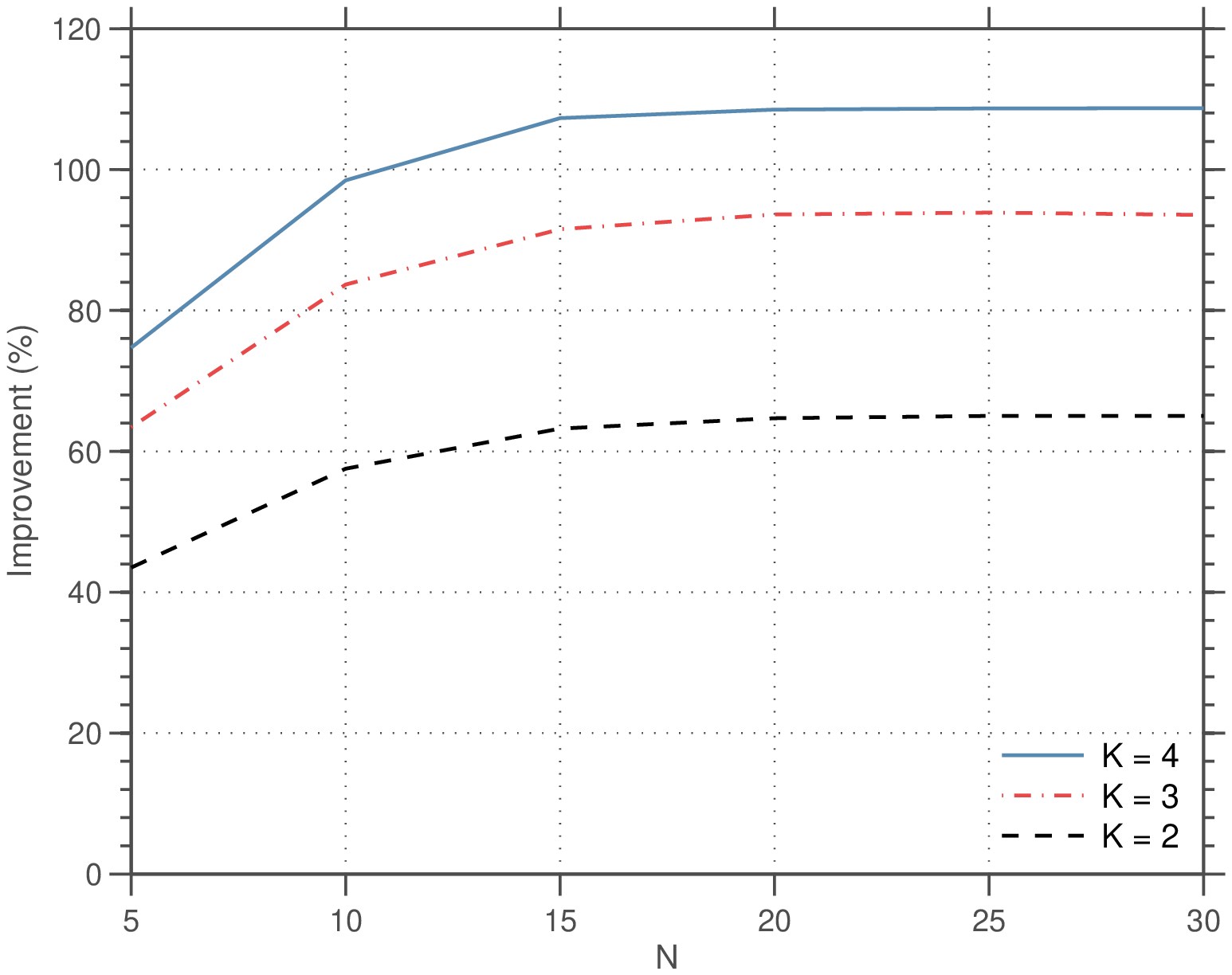}
\caption{}
\label{figure:capital_budgeting_improvement_overall}
\end{subfigure}~%
\begin{subfigure}[b]{0.48\textwidth}
\includegraphics[width=\textwidth]{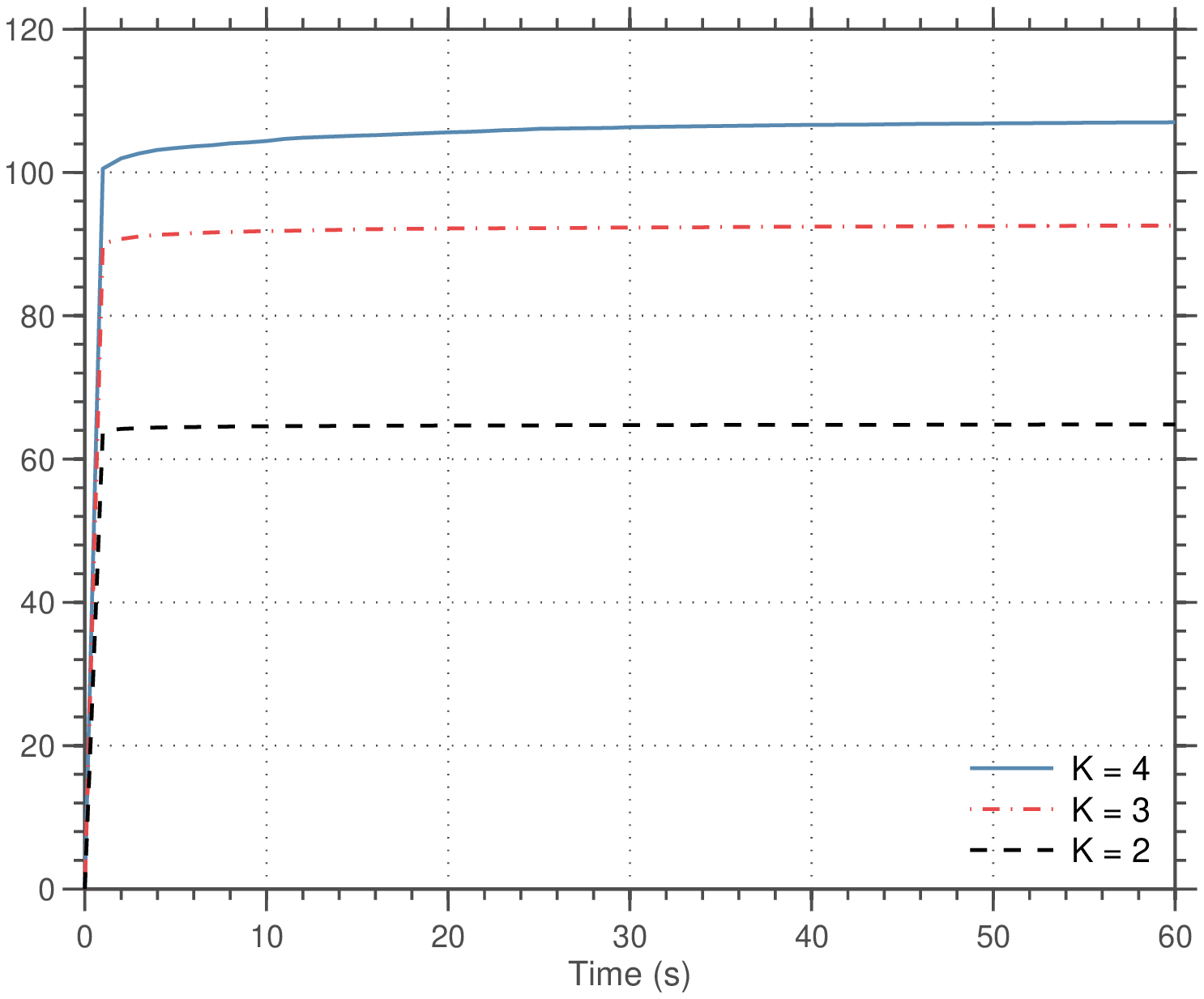}
\caption{}
\label{figure:capital_budgeting_improvement_profile}
\end{subfigure}

\begin{subfigure}[b]{0.48\textwidth}
\includegraphics[width=\textwidth]{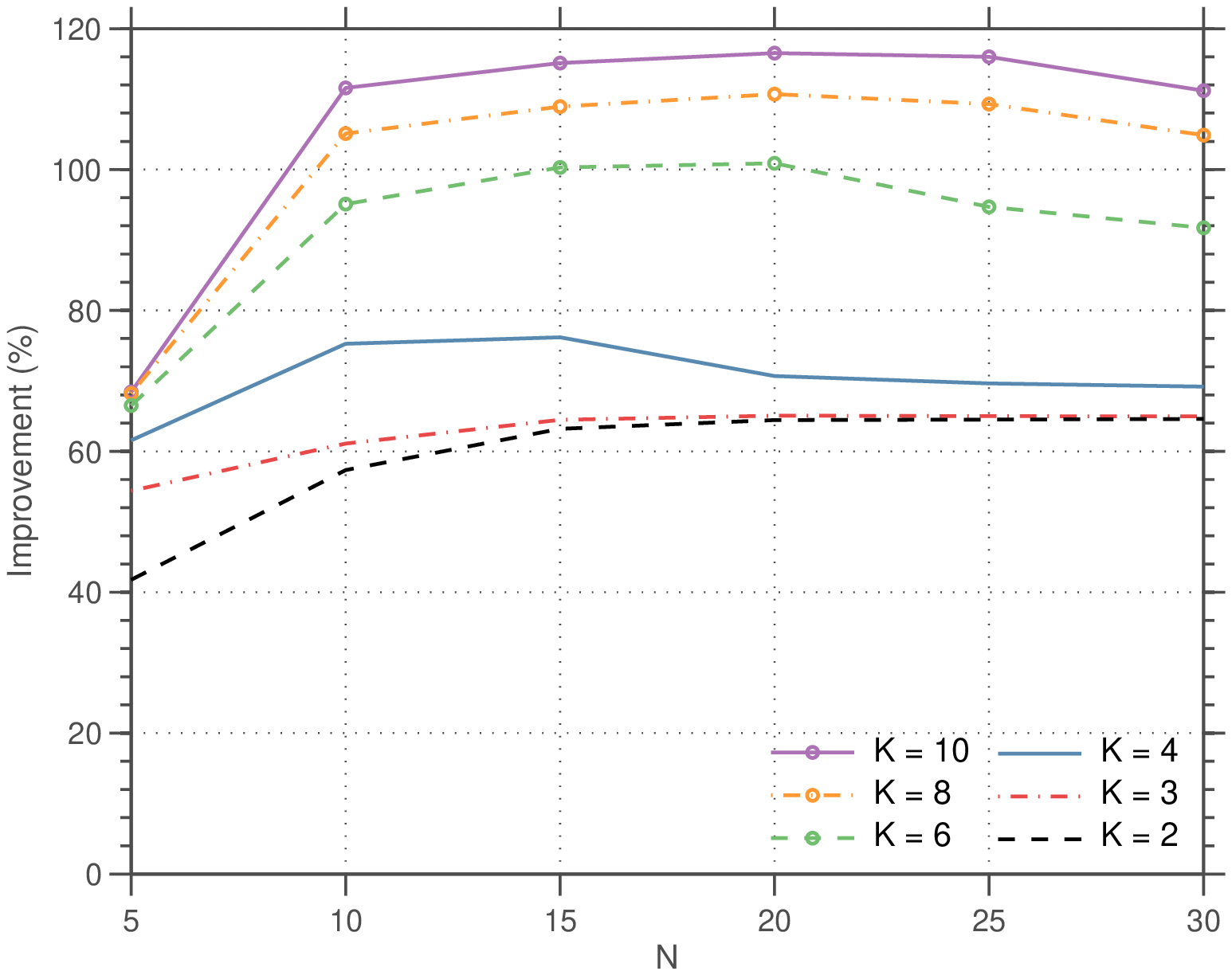}
\caption{}
\label{figure:capital_budgeting_improvement_overall_heuristic}
\end{subfigure}~%
\begin{subfigure}[b]{0.48\textwidth}
\includegraphics[width=\textwidth]{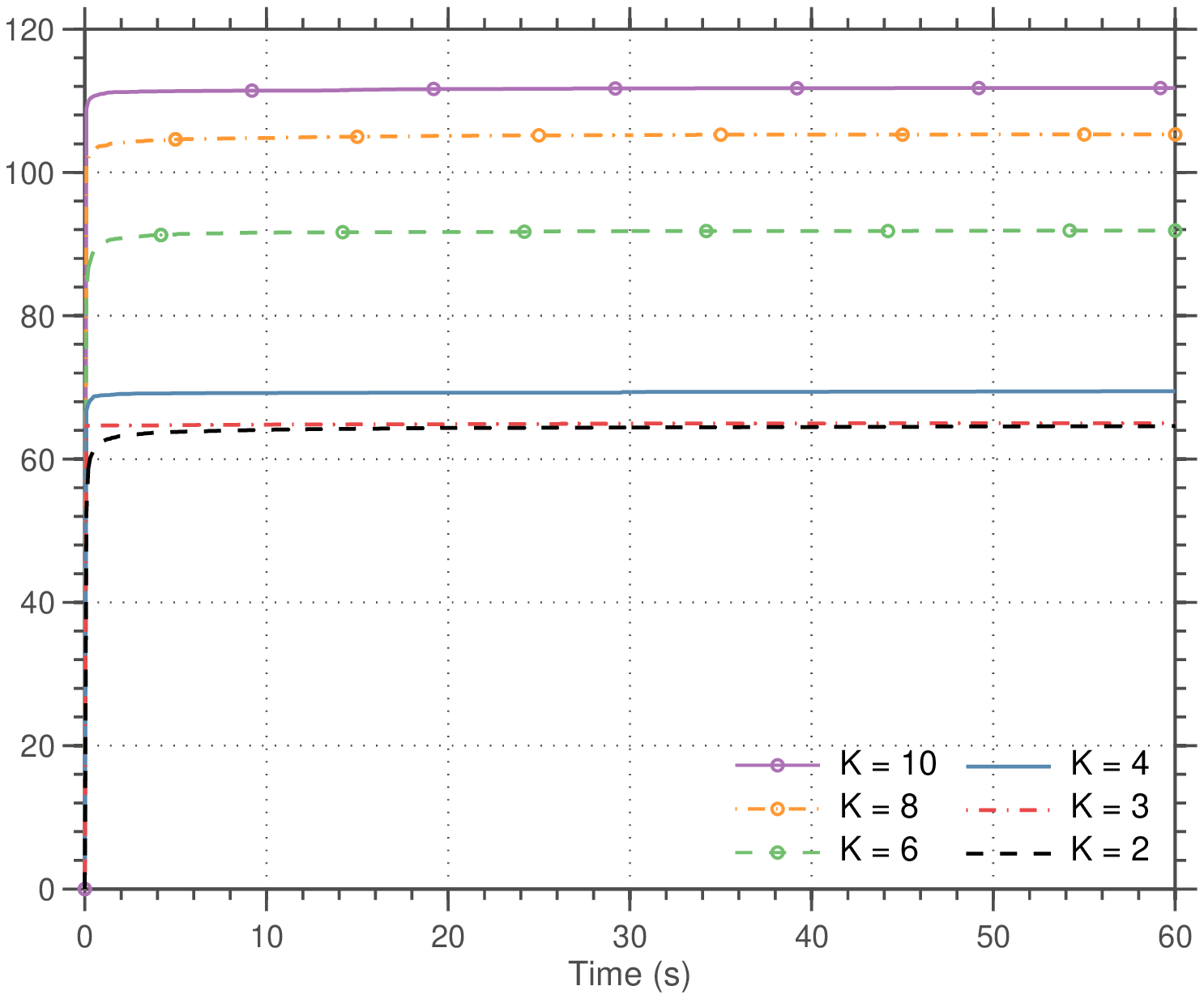}
\caption{}
\label{figure:capital_budgeting_improvement_profile_heuristic}
\end{subfigure}
\caption{Results for the capital budgeting problem. 
The top (bottom) graphs are for the exact (heuristic) algorithm under a time limit of 2~hours (1~minute).
The left graphs show the average improvement at the time limit (for increasing $N$), while the right graphs show the time profile of the improvement of the incumbent solution in the first 60~seconds (for $N = 30$).}
\label{figure:capital_budgeting_improvement}
\end{figure}

\subsection{Capital Budgeting with Loans}\label{sec:num_results_capital_budgeting_loans}
We consider a generalization of the capital budgeting problem from Section~\ref{sec:num_results_capital_budgeting} where the company can increase its investment budget by purchasing a loan from the bank at a unit cost of $\lambda > 0$ before the risk factors $\bm{\xi}$ are observed as well as purchasing a loan at a unit cost of $\mu \lambda$ (with $\mu > 1$) after the observation occurs. If the company does not purchase any loan, then the problem reduces to the one described in Section~\ref{sec:num_results_capital_budgeting}. Therefore, we expect the worst-case profits to be at least as large as in that setting. The generalized capital budgeting problem can be formulated as the following instance of problem~\eqref{eq:two_stage_ro}:
\begin{equation*}
\adjustlimits\sup_{(x_0, \bm{x}) \in \mathcal{X} \; } \inf_{\bm{\xi} \in \Xi \; } \sup_{(y_0, \bm{y}) \in \mathcal{Y}}
\left\{ \bm{r}(\bm{\xi})^\top (\bm{x} + \kappa \bm{y}) - \lambda(x_0 + \mu y_0) \, :
\left[
\begin{array}{l}
\bm{x} + \bm{y} \leq \mathbf{e} \\
\bm{c} (\bm{\xi})^\top \bm{x} \leq B + x_0 \\
\bm{c} (\bm{\xi})^\top (\bm{x} + \bm{y}) \leq B + x_0 + y_0
\end{array}
\right]
\right\}
\end{equation*}
Here, $\mathcal{X} = \mathcal{Y} = \mathbb{R}_{+} \times \{0, 1\}^N$. The constraint $\bm{c} (\bm{\xi})^\top \bm{x} \leq B + x_0$ ensures that the first-stage expenditures $\bm{c} (\bm{\xi})^\top \bm{x} $ are fully covered by the budget $B$ as well as the loan $x_0$ taken here-and-now.

We consider problems with $N \in \{5, 10, \ldots, 30\}$ projects. For each value of $N$, we solve the same 100 instances from Section~\ref{sec:num_results_capital_budgeting} with $\lambda = 0.12$ and $\mu = 1.2$. Table~\ref{table:num_results_capital_budgeting_loans} shows the computational performance of our branch-and-bound scheme for $K \in \{2, 3, 4\}$. As in the case of the problems discussed so far, the numerical tractability of our algorithm decreases as the value of $K$ increases. However, a comparison of Tables~\ref{table:num_results_capital_budgeting} and~\ref{table:num_results_capital_budgeting_loans} suggests that the numerical tractability is not significantly affected by the presence of the additional continuous variables $x_0$ and $y_0$. Indeed, the majority of instances for $K = 2$ are solved to optimality and the average gap across all unsolved instances is less than 5\% for $K=3$ and less than 9\% for $K=4$. Figure~\ref{figure:capital_budgeting_loans_improvement} shows that the $4$-adaptable solutions improve upon the static solutions by as much as 115\% in the largest instances. Although not shown in the figure, a comparison of the objective values of the final incumbent solutions with those of the capital budgeting problem without loans (Section~\ref{sec:num_results_capital_budgeting}) reveals that for $N \geq 15$, the option to purchase loans has no effect on the worst-case profit of the static solution and results in less than 1\% improvement in the worst-case profit of the $2$-adaptable solution. Indeed, the option to purchase loans results in significantly better worst-case profits only if $K \geq 3$. The average relative gain in objective value is 4.3\% for $K=3$ and 5.9\% for $K=4$.

\begin{table}[!hbt]
  \centering
  \caption{Results for the capital budgeting problem with loans. The columns have the same interpretation as in Table~\ref{table:num_results_shortest_path}.}
    \begin{tabularx}{\textwidth}{cRRRRRRRRR}
    \toprule
          & \multicolumn{3}{c}{$K = 2$} & \multicolumn{3}{c}{$K = 3$} & \multicolumn{3}{c}{$K = 4$} \\
    \cmidrule(lr){2-4} \cmidrule(lr){5-7} \cmidrule(l){8-10}
    $N$     & Opt (\#) & Time (s) & Gap (\%) & Opt (\#) & Time (s) & Gap (\%) & Opt (\#) & Time (s) & Gap (\%) \\
    \midrule
    5     & 100   & 1     & --    & 100   & 9     & --    & 98    & 80    & 3.14 \\
    10    & 100   & 3     & --    & 100   & 78    & --    & 98    & 938   & 1.92 \\
    15    & 100   & 62    & --    & 96    & 1,265 & 0.91  & 23    & 3,989 & 2.23 \\
    20    & 85    & 1,680 & 0.80  & 20    & 3,941 & 1.71  & 0     & --    & 4.94 \\
    25    & 12    & 3,363 & 2.29  & 1     & 2,693 & 3.34  & 0     & --    & 6.88 \\
    30    & 1     & 424   & 3.78  & 0     & --    & 4.73  & 0     & --    & 8.17 \\
    \bottomrule
    \end{tabularx}%
  \label{table:num_results_capital_budgeting_loans}%
\end{table}%

\begin{figure}[!htb]
\captionsetup[subfigure]{belowskip=0pt}
\centering
\begin{subfigure}[b]{0.48\textwidth}
\includegraphics[width=\textwidth]{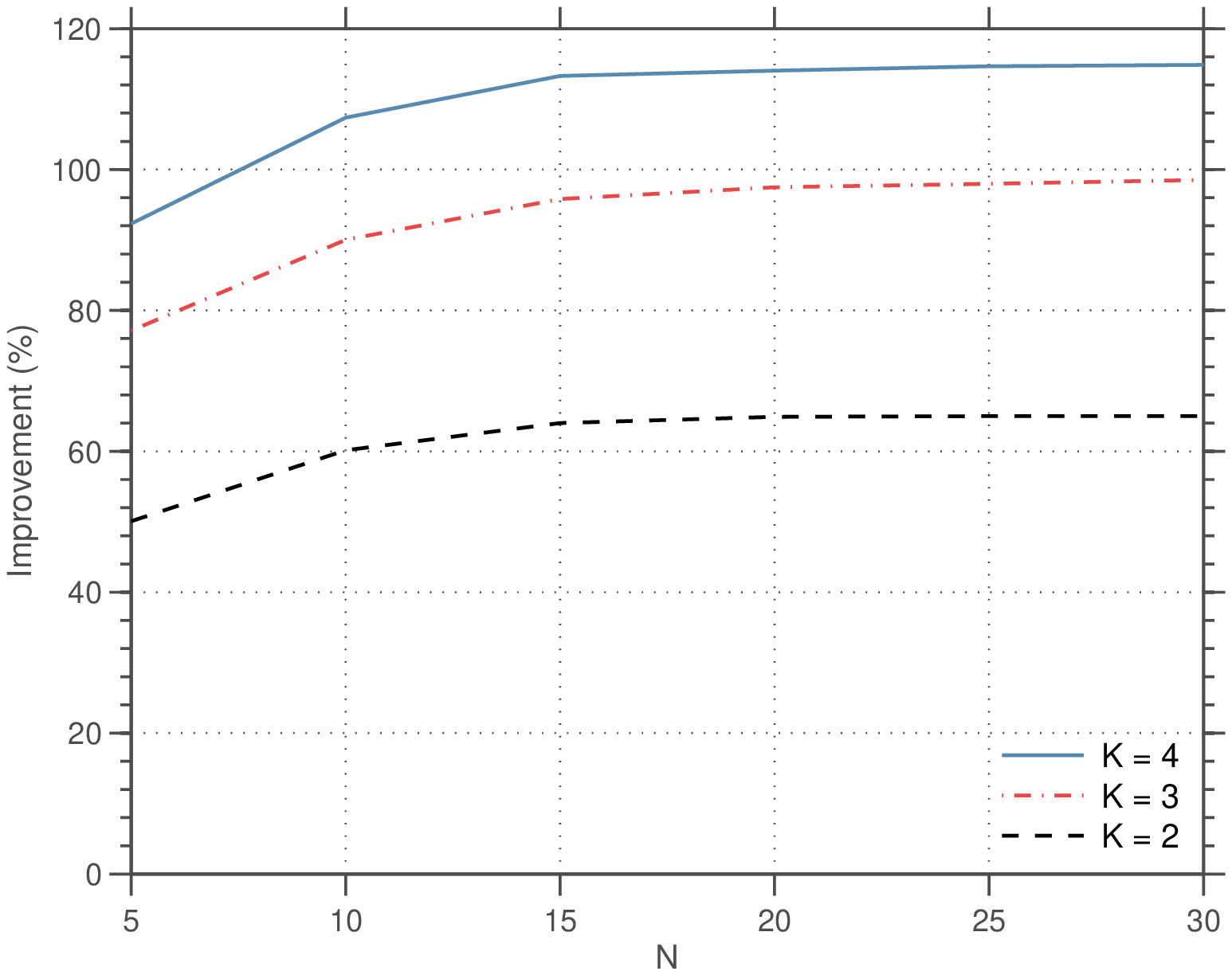}
\caption{}
\label{figure:capital_budgeting_loans_improvement_overall}
\end{subfigure}~%
\begin{subfigure}[b]{0.48\textwidth}
\includegraphics[width=\textwidth]{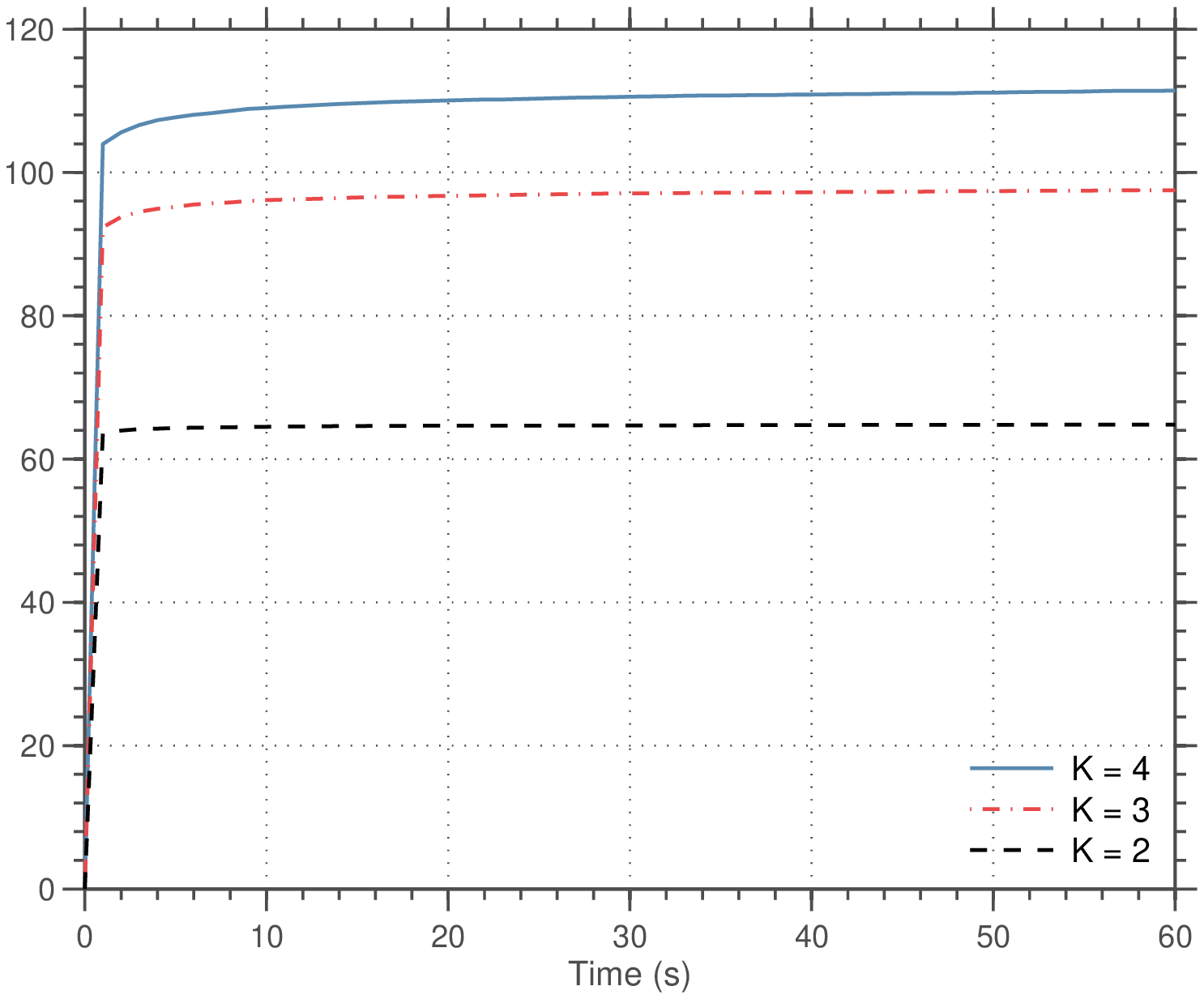}
\caption{}
\label{figure:capital_budgeting_loans_improvement_profile}
\end{subfigure}

\begin{subfigure}[b]{0.48\textwidth}
\includegraphics[width=\textwidth]{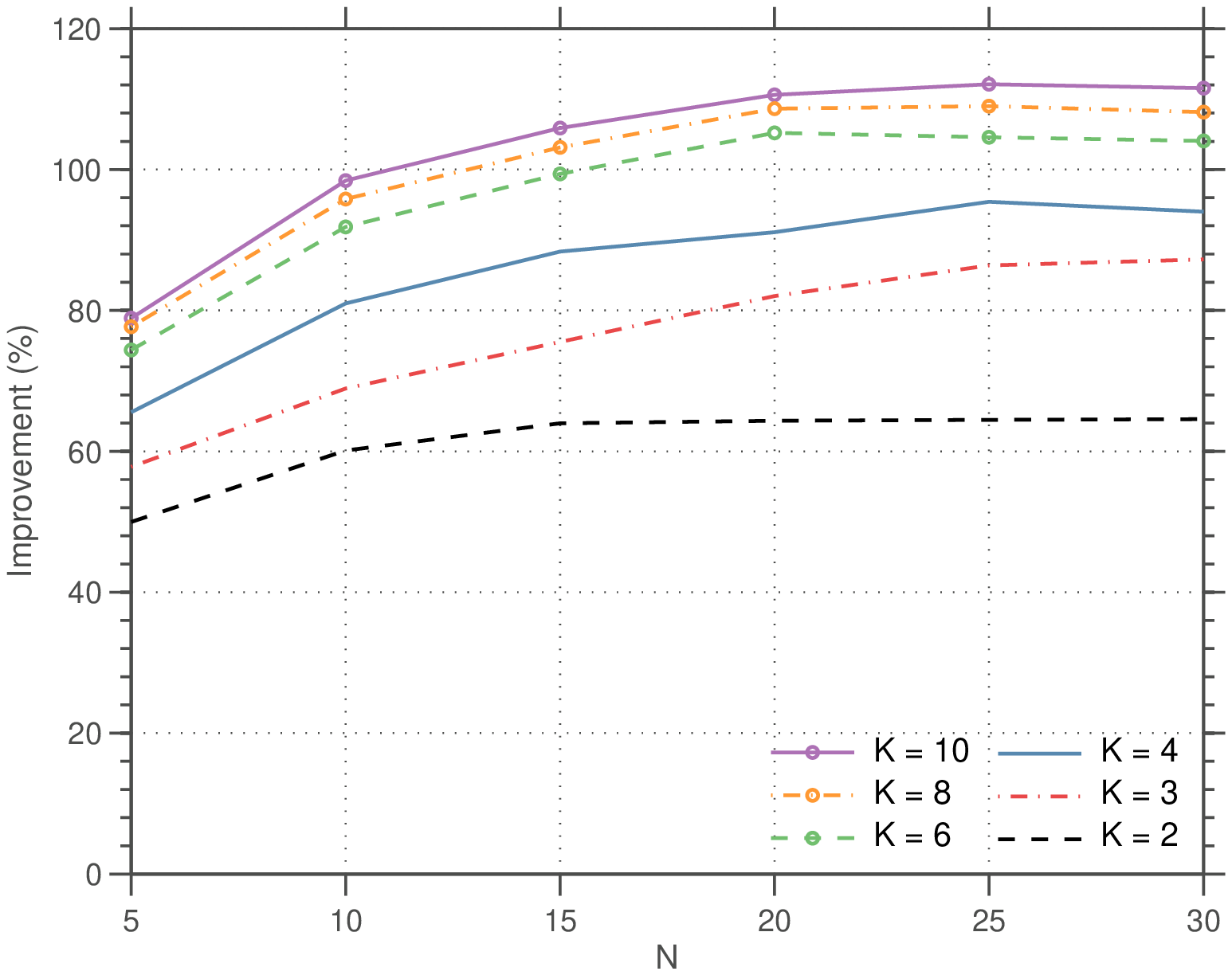}
\caption{}
\label{figure:capital_budgeting_loans_improvement_overall_heuristic}
\end{subfigure}~%
\begin{subfigure}[b]{0.48\textwidth}
\includegraphics[width=\textwidth]{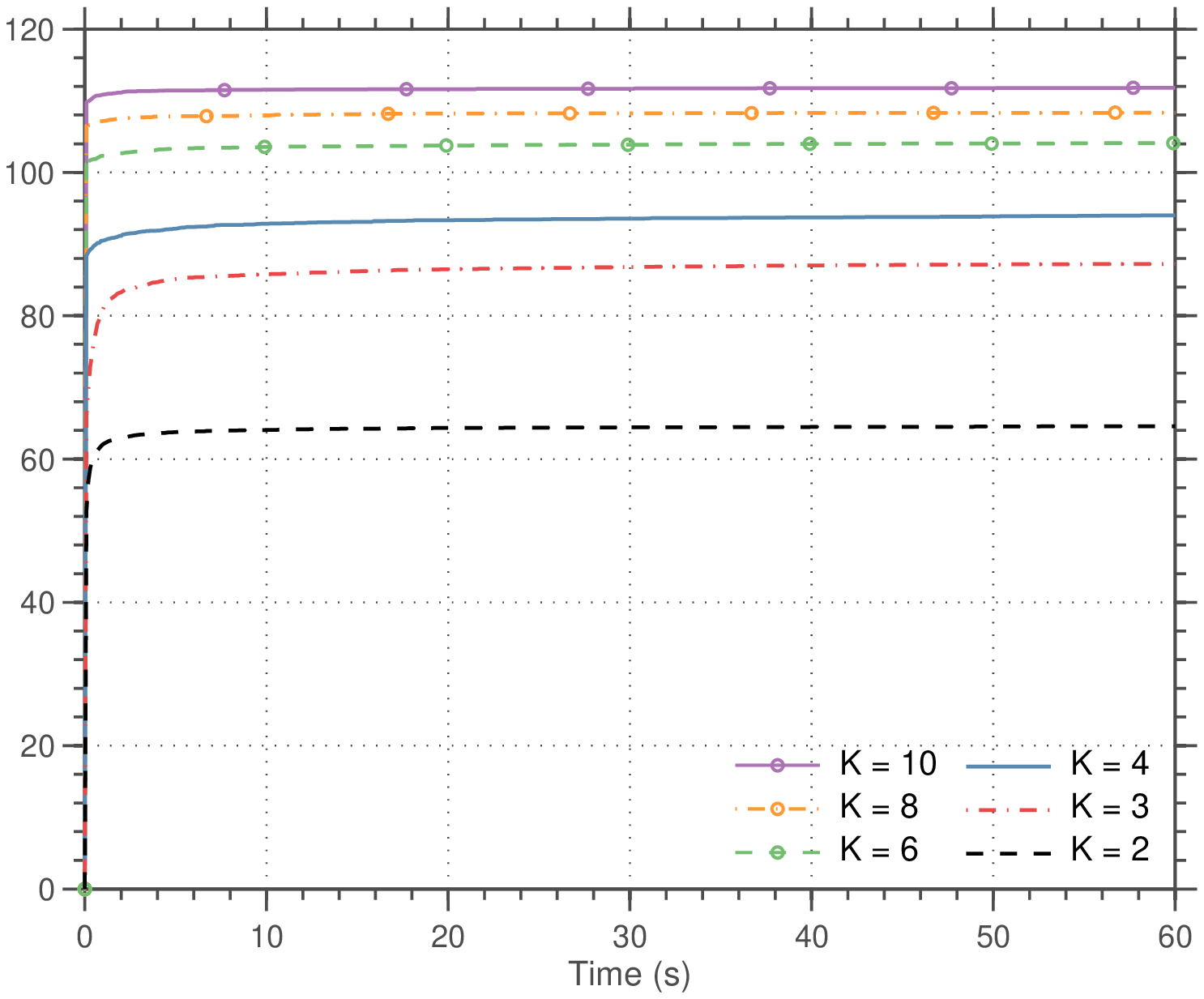}
\caption{}
\label{figure:capital_budgeting_loans_improvement_profile_heuristic}
\end{subfigure}
\caption{Results for the capital budgeting problem with loans. The graphs have the same interpretation as in Figure~\ref{figure:capital_budgeting_improvement}.}
\label{figure:capital_budgeting_loans_improvement}
\end{figure}

\subsection{Project Management}\label{sec:num_results_project_management}

We define a project as a directed acyclic graph $G = (V, A)$ whose nodes $V = \{ 1, \ldots, N \}$ represent the tasks (e.g., `build foundations' or `develop prototype') and whose arcs $A \subseteq V \times V$ denote the temporal precedences, i.e., $(i, j) \in A$ implies that task $j$ cannot be started before task $i$ has been completed. 
We assume that each task $i \in V$ has an uncertain duration $d_i (\bm{\xi})$ that depends on the realization of an uncertain parameter vector $\bm{\xi} \in \Xi$. Without loss of generality, we stipulate that the project graph $G$ has the unique sink $N \in V$, and that the last task $N$ has a duration of zero. This can always be achieved by introducing dummy nodes and/or arcs.

In the following, we want to calculate the worst-case makespan of the project, i.e, the smallest amount of time that is required to complete the project under the worst realization of the parameter vector $\bm{\xi} \in \Xi$. This problem can be cast as the following instance of problem~\eqref{eq:two_stage_ro}:
\begin{equation*}
\sup_{\bm{\xi} \in \Xi} \; \inf_{\bm{y} \in \mathcal{Y}} \;
\left\{ y_N \, : \, y_j - y_{i} \geq d_i (\bm{\xi}) \;\; \forall (i,j) \in A \right\}
\end{equation*}
Here $\mathcal{Y} = \mathbb{R}^N_+$, and $y_i$ denotes the start time of task $i$, $i = 1, \ldots, N$. This problem is known to be NP-hard \cite[Theorem~2.1]{WKR2012:robust_temporal_networks}, and we will employ affine decision rules as well as $K$-adaptable constant and affine decisions to approximate the optimal value of this problem. Note that the problem does not contain any first-stage decisions, but such decisions could be readily included, for example, to allow for resource allocations that affect the task durations.

For our numerical experiments, we consider the instance class presented in~\cite[Example~2.2]{WKR2012:robust_temporal_networks}. To this end, we set $N = 3m + 1$ and $A = \{ (3l + 1, 3l + p), (3l + p, 3l + 4) \, : \, l = 0, \ldots, m \text{ and } p = 2, 3 \}$, $d_{3l+2} (\bm{\xi}) = \xi_{l+1}$ and $d_{3l+3} (\bm{\xi}) = 1 - \xi_{l+1}$, $l = 0, \ldots, m - 1$, as well as $d_{3l+1} (\bm{\xi}) = 0$, $l = 0, \ldots, m$. Figure~\ref{figure:project_network_example} illustrates the project network corresponding to $m = 4$. Similar to~\cite{WKR2012:robust_temporal_networks}, we consider the uncertainty set $\Xi = \{ \bm{\xi} \in \mathbb{R}^m_+ \, : \, \lVert \bm{\xi} - \mathbf{e} / 2 \rVert_1 \leq 1/2 \}$.

\begin{figure}[!htbp]
\centering
\includegraphics[width=0.55\textwidth]{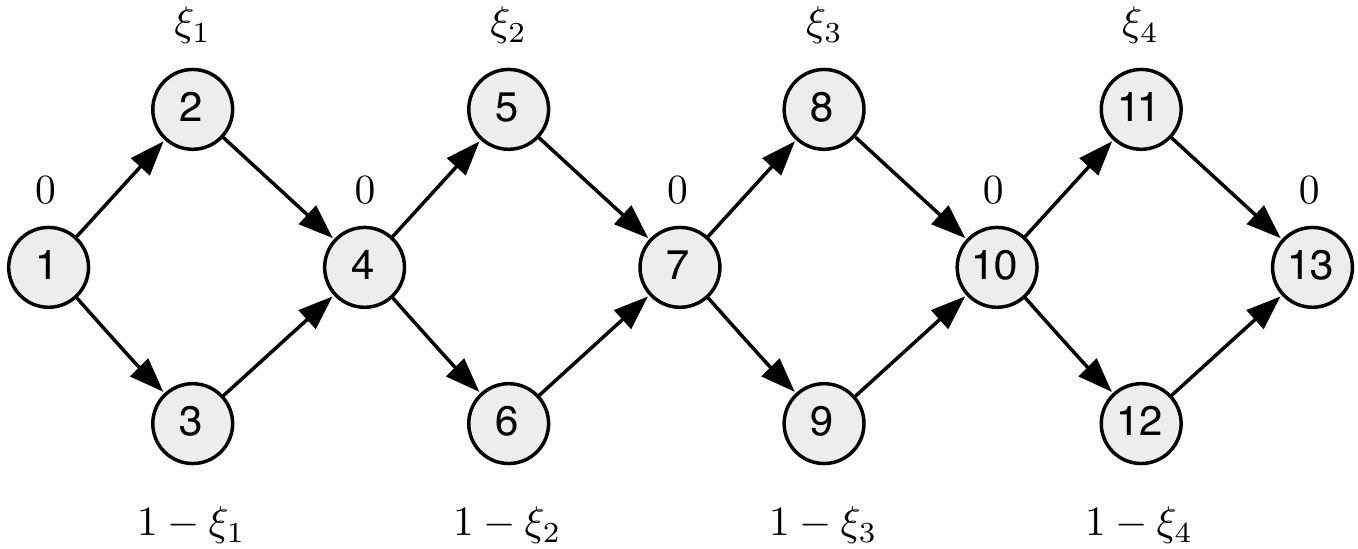}
\caption{Project network with $N = 3m + 1$ nodes for $m = 4$.}
\label{figure:project_network_example}
\end{figure}

We consider project networks of size $N \in \{ 3 m + 1 \, : \, m = 3, 4, \ldots, 8 \}$. One can show that for each network size $N$, the optimal value of the corresponding static robust optimization problem as well as the affine decision rule problem is $m$, see~\cite[Example~2.2]{WKR2012:robust_temporal_networks}. For $K \in \{2, 3, 4\}$, Figure~\ref{figure:project_management_results} summarizes the computational performance of the branch-and-bound scheme and the improvement in objective value of the resulting piecewise constant and piecewise affine decision rules with $K$ pieces over the corresponding $1$-adaptable solutions. Figures~\ref{figure:project_management_static_improvement} and~\ref{figure:project_management_affine_improvement} show that using only two pieces, piecewise constant decision rules can improve upon the affine approximation by more than 12\%, while a piecewise affine decision rule can improve by more than 15\%. Figures~\ref{figure:project_management_static_gap} and~\ref{figure:project_management_affine_gap} show that piecewise constant decision rules require smaller computation times than piecewise affine decision rules. This is not surprising since piecewise constant decision rules are parameterized by $\mathcal{O} (KN)$ variables, whereas piecewise affine decision rules are parameterized by $\mathcal{O} (KN^2)$ variables.

\begin{figure}[!htb]
\captionsetup[subfigure]{belowskip=0pt}
\centering
\begin{subfigure}[b]{0.48\textwidth}
\includegraphics[width=\textwidth]{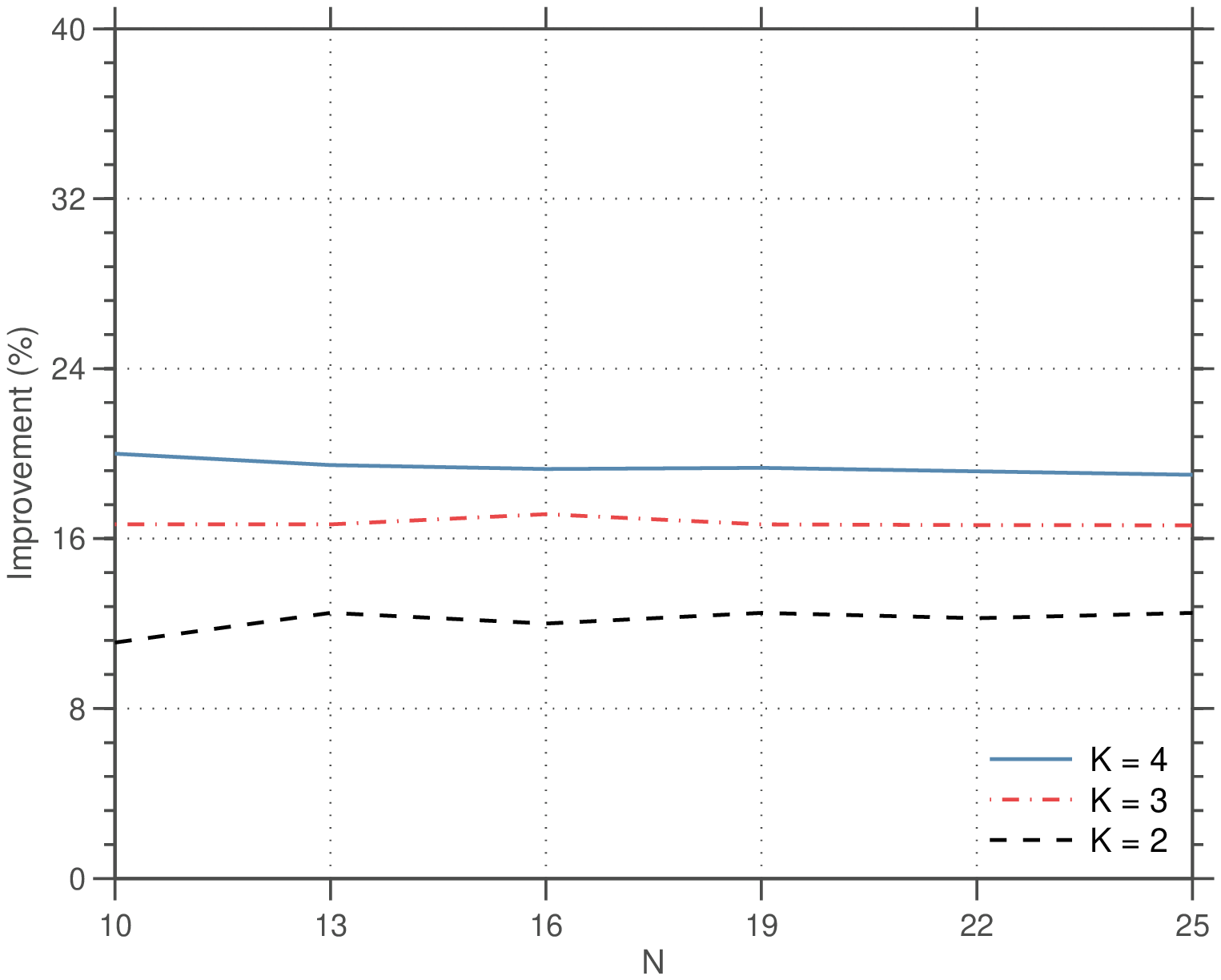}
\caption{}
\label{figure:project_management_static_improvement}
\end{subfigure}~%
\begin{subfigure}[b]{0.48\textwidth}
\includegraphics[width=\textwidth]{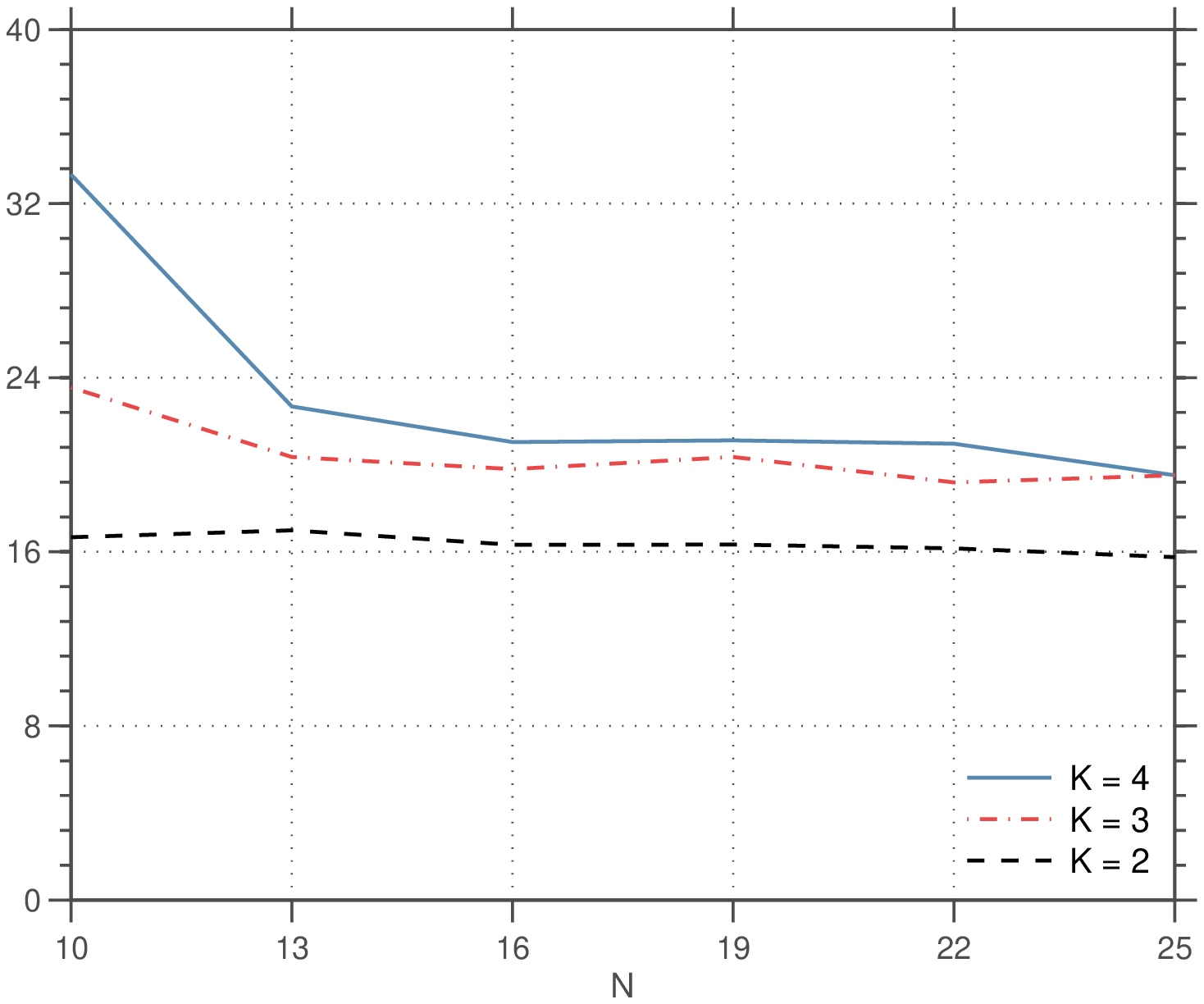}
\caption{}
\label{figure:project_management_affine_improvement}
\end{subfigure}

\begin{subfigure}[b]{0.48\textwidth}
\includegraphics[width=\textwidth]{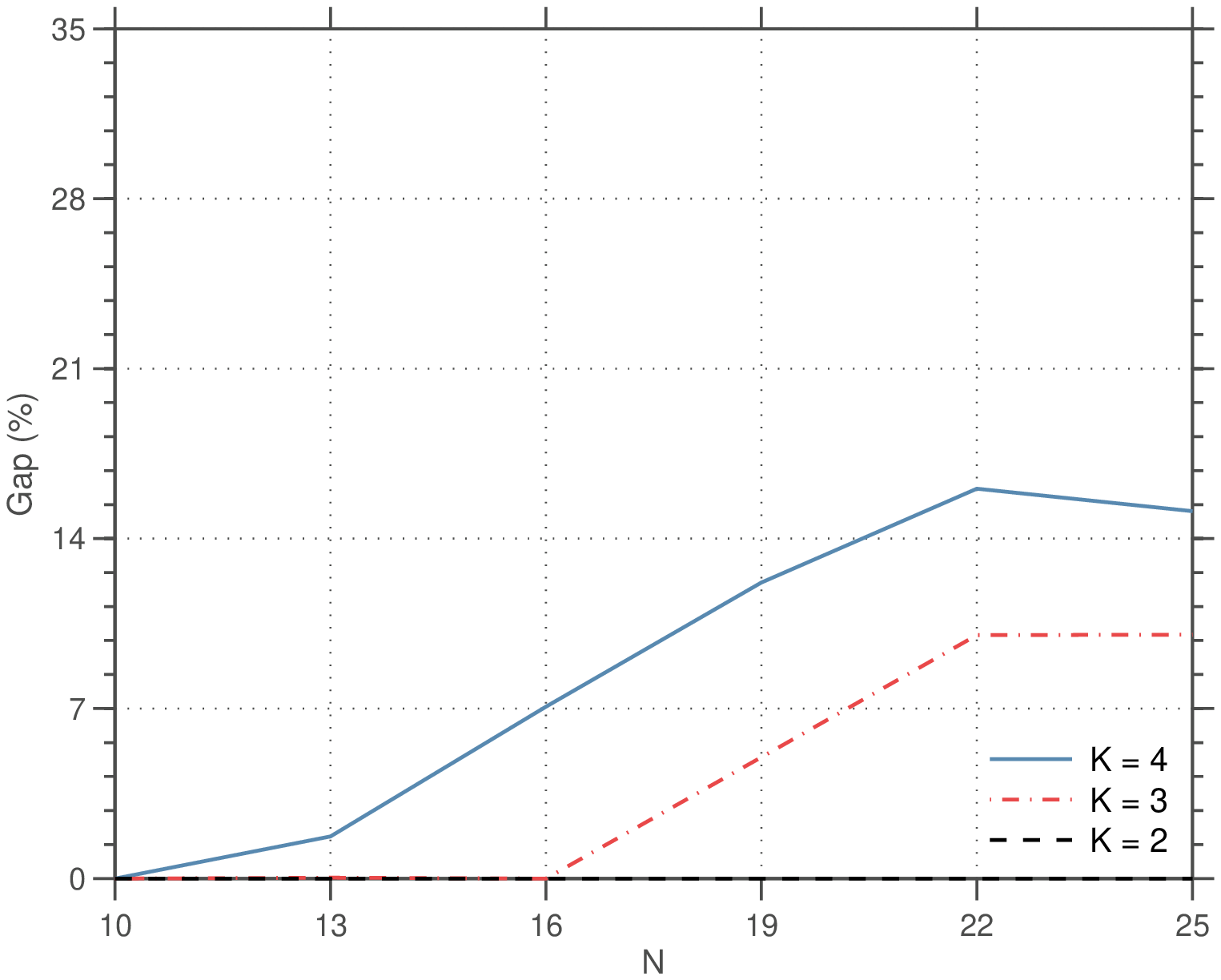}
\caption{}
\label{figure:project_management_static_gap}
\end{subfigure}~%
\begin{subfigure}[b]{0.48\textwidth}
\includegraphics[width=\textwidth]{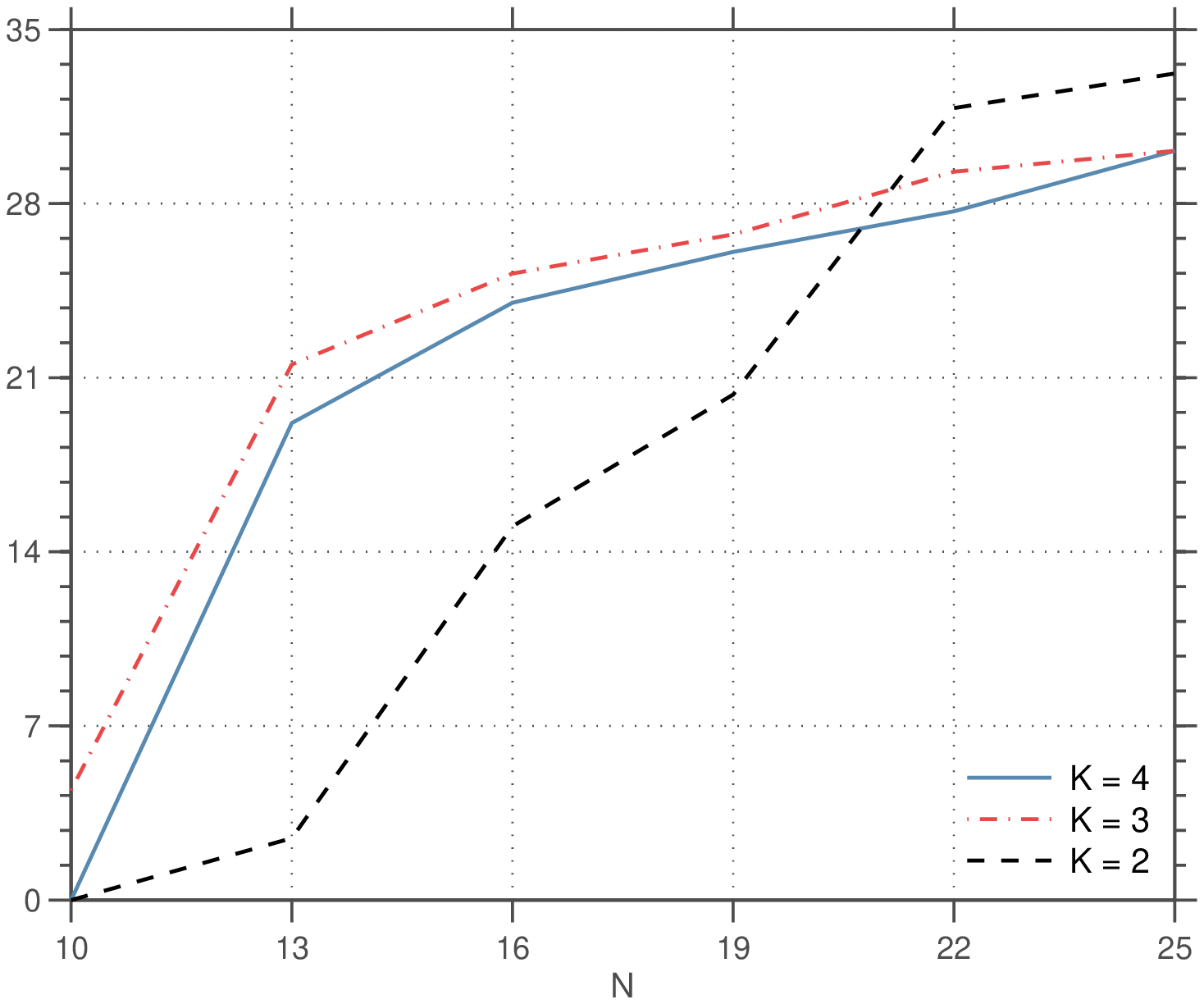}
\caption{}
\label{figure:project_management_affine_gap}
\end{subfigure}
\caption{Results for the project management problem. The left (right) graphs show results for the piecewise constant (affine) $K$-adaptability problems for increasing values of $N$. Graphs (a) and (b) depict improvements in objective value, while graphs (c) and (d) show optimality gaps after $2$ hours. The $y$-axes in graphs (a) and (b) have the same interpretation as those in Figure~\ref{figure:spp_improvement} while the $y$-axes in graphs (c) and (d) have the same interpretation as the column ``Gap (\%)'' in Table~\ref{table:num_results_shortest_path}.}
\label{figure:project_management_results}
\end{figure}

\subsection{Vehicle Routing}\label{sec:num_results_vehicle_routing}
We consider the classical capacitated vehicle routing problem~\cite{GRTWF16:robust_cvrp_demand_unc,GWF13:robust_cvrp_demand_unc,TV14:vehicle_routing_book} defined on a complete, undirected graph $G = (V, E)$ with nodes $V = \{0, 1, \ldots, N\}$ and edges $E = \{(i,j) \in V \times V: i < j \}$. Node $0$ represents the unique depot, while each node $i \in V_C = \{1, \ldots, N\}$ corresponds to a customer with demand $d_i \in \mathbb{R}_{+}$. The depot is equipped with $M$ homogeneous vehicles; each vehicle has capacity $C$ and it incurs an uncertain travel time $t_{ij}(\bm{\xi}) = (1 + \xi_{ij}/2) t_{ij}^0$ when it traverses the edge $(i, j) \in E$. Here, $t_{ij}^0 \in \mathbb{R}_{+}$ represents the nominal travel time along the edge $(i, j) \in E$, while $\xi_{ij}$ denotes the uncertain deviation from the nominal value. Similar to the shortest path problem from Section~\ref{sec:num_results_shortest_path}, the realizations of the uncertain vector $\bm{\xi}$ are known to belong to the set
\begin{equation*}
\Xi = \left\{ \bm{\xi} \in [0,1]^{\lvert E \rvert}: \sum_{(i,j)\in E} \xi_{ij} \leq \Gamma \right\},
\end{equation*}
which stipulates that at most $\Gamma$ travel times may maximally deviate from their nominal values.

A \emph{route plan} $(R_1, \ldots, R_M)$ corresponds to a partition of the customer set $V_C$ into $M$ vehicle routes, $R_m = (R_{m,1},\ldots, R_{m,N_m})$, where $R_{m,l}$ represents the $l^\text{th}$ customer and $N_m$ the number of customers served by the $m^\text{th}$ vehicle. This route plan is feasible if the total demand served on each route is less than the vehicle capacity; that is, if $\sum_{l = 1}^{N_m} d_{R_{m,l}} \leq C$ is satisfied for all $m \in \{1, \ldots, M\}$. The total travel time of a feasible route plan under the uncertainty realization $\bm{\xi}$ is given by $\sum_{m = 1}^M \sum_{l = 0}^{N_m} t_{R_{m,l} R_{m,l+1}} (\bm{\xi})$, where we define $R_{m,0} =  R_{m,N_m+1} = 0$; that is, each vehicle starts and ends at the depot.
The decision-maker aims to choose $K$ route plans here-and-now, i.e., before observing the actual travel times, such that the worst-case total travel time of the shortest among the chosen route plans is minimized. This problem can be formulated as an instance of the $K$-adaptability problem~\eqref{eq:k_adaptability}:
\begin{equation*}
\inf_{\bm{y} \in \mathcal{Y}^K} \; \sup_{\bm{\xi} \in \Xi} \; \inf_{k \in \mathcal{K}} \;
\bm{t} (\bm{\xi})^\top \bm{y}_k
\end{equation*}
Here, $\mathcal{Y}$ denotes the set of all feasible route plans in $G$; that is,
\begin{equation*}
\mathcal{Y} = \left\{ \bm{y} \in \mathbb{Z}^{\lvert E \rvert}_{+} :
\begin{array}{l}
\displaystyle 0 \leq y_{ij} \leq 1 \quad \forall (i,j) \in E: i, j \in V_C, \\
\displaystyle \sum_{j \in V_C} y_{0j} = 2M, \\
\displaystyle \sum_{j \in V : (i,j) \in E} y_{ij} = 2 \quad \forall i \in V_C, \\
\displaystyle \sum_{(i, j) \in E : i, j \in U} y_{ij} \leq \lvert U \rvert - \left\lceil \frac{1}{D} \sum_{i \in U} d_i \right\rceil \quad \forall U \subseteq V_C
\end{array}
\right\}.
\end{equation*}
Similar to the shortest path problem, the $K$-adaptability formulation of the vehicle routing problem only contains second-stage decisions, and as such, the corresponding two-stage robust optimization problem~\eqref{eq:two_stage_ro} is of limited interest in practice. However, the $K$-adaptability problem~\eqref{eq:k_adaptability} has important applications in logistics enterprises, where the time available between observing the travel times in a road network and determining the route plan is limited, or because the drivers must be trained to a small set of route plans that are to be executed daily over the course of a year.

We note that the set $\mathcal{Y}$ represents the so-called \emph{two-index vehicle flow} formulation of the vehicle routing problem, in which the first equation ensures that $M$ vehicles are used; the second set of equations ensures that each customer is visited by exactly one vehicle; while the third set of inequalities ensure that there are no \emph{subtours} disconnected from the depot and that all vehicle capacities are respected.
This formulation is known to be extremely challenging to solve because it consists of an exponential number of inequalities.
For $K > 1$, the corresponding $K$-adaptability problem is naturally even more challenging and it is practically intractable to solve it using the approach described in~\cite{HKW15:rip}. In contrast, the heuristic variant of our algorithm described in Section~\ref{sec:bab_extensions} as well as the heuristic approach of~\cite{BK16:min_max_max_MP} only require the solution of vehicle routing subproblems that are of similar complexity as the associated $1$-adaptability problems.
Therefore, in the following, we only present results using these algorithms.
In both cases, we solved all vehicle routing subproblems using the branch-and-cut algorithm described in~\cite{LLE04:vehicle_routing_branch_and_cut}.

For our numerical experiments, we consider all 49 instances from~\cite{LLE04:vehicle_routing_branch_and_cut} with $N \leq 50$, which are commonly used to benchmark vehicle routing algorithms. We set an overall time limit of 2~hours. For the heuristic variant of our algorithm, we further set a time limit of 10~minutes per vehicle routing subproblem. We note that the heuristic of~\cite{BK16:min_max_max_MP} requires the successful termination of an expensive preprocessing step to determine good $K$-adaptable solutions.
Therefore, to prevent bias in favor of our algorithm, Figure~\ref{figure:vrp_improvement_heuristic} compares the two algorithms only across the 39 instances for which this step terminated successfully.
The figure shows that when the number of policies is small, the $K$-adaptable solutions obtained using our algorithm are about 1\% better than those obtained using the heuristic algorithm of~\cite{BK16:min_max_max_MP}. 
Moreover, the differences in their objective values are relatively higher for larger instances.

\begin{figure}[!htb]
\captionsetup[subfigure]{belowskip=0pt}
\centering
\begin{subfigure}[b]{0.48\textwidth}
\includegraphics[width=\textwidth]{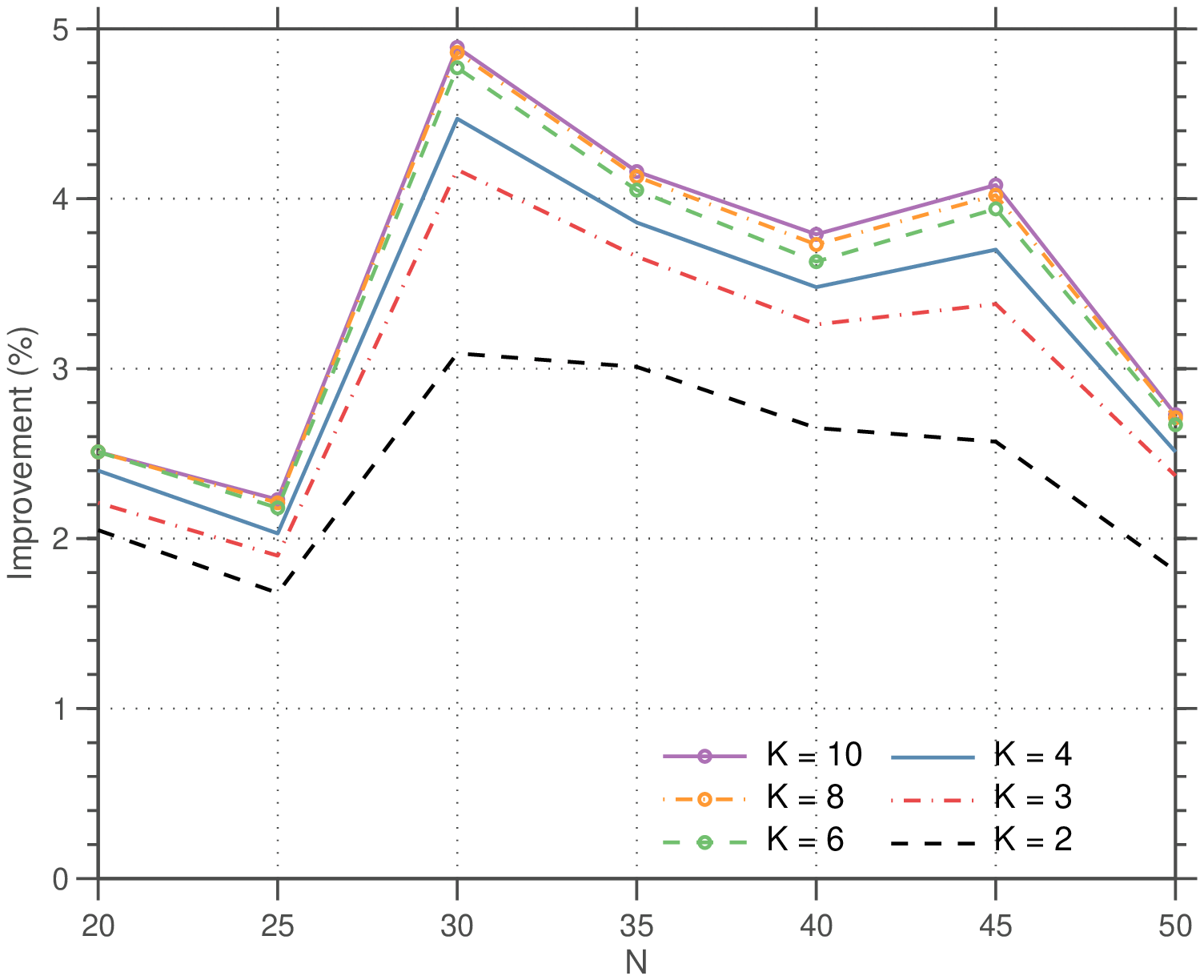}
\caption{}
\label{figure:vrp_improvement_heuristic_bab}
\end{subfigure}~%
\begin{subfigure}[b]{0.48\textwidth}
\includegraphics[width=\textwidth]{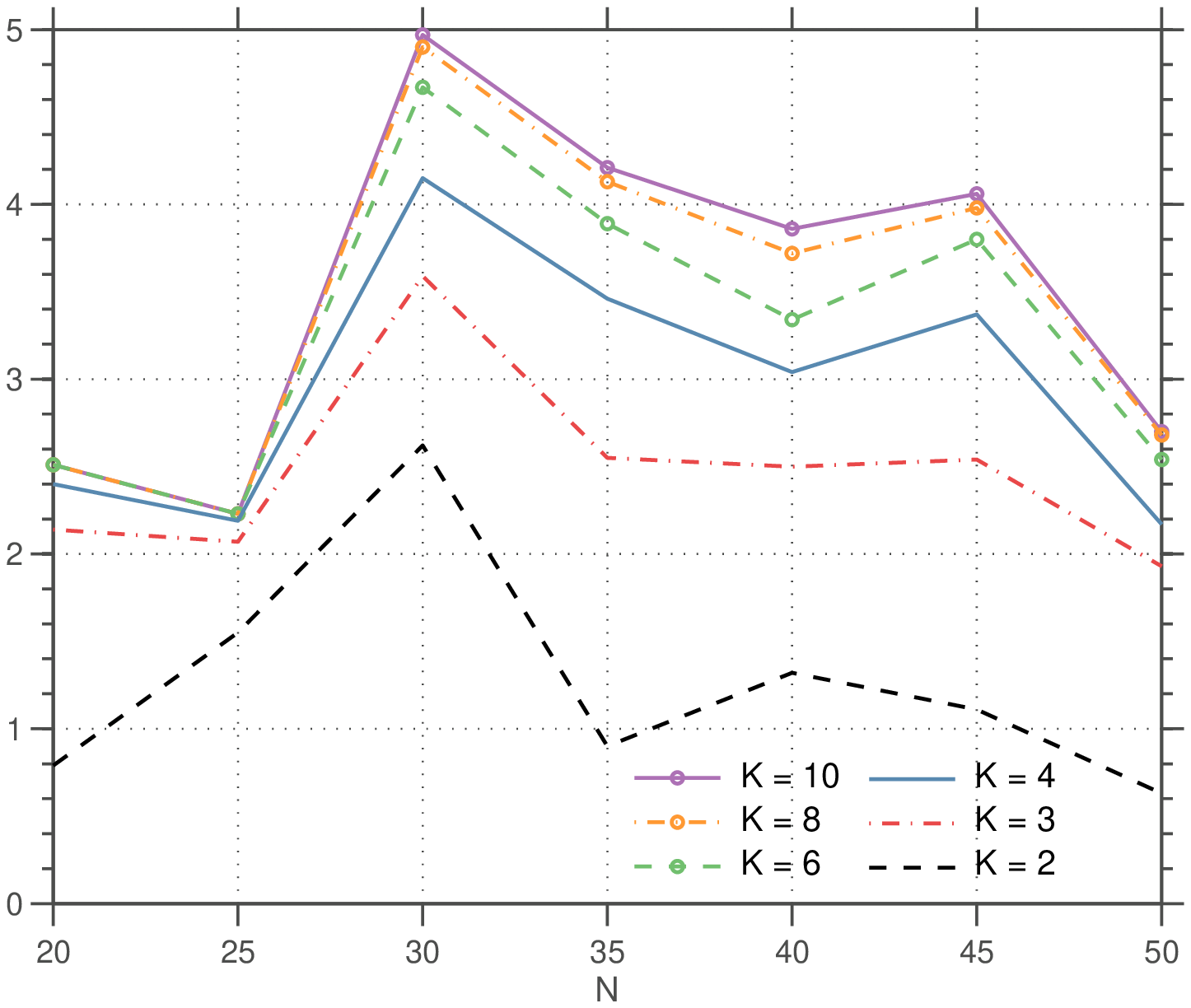}
\caption{}
\label{figure:vrp_improvement_heuristic_mmm}
\end{subfigure}
\caption{Results for the vehicle routing problem. The graphs show the average improvement after 2~hours obtained using the heuristic variant of our algorithm (left) and the heuristic algorithm in~\cite{BK16:min_max_max_MP} (right).} 
\label{figure:vrp_improvement_heuristic}
\end{figure}

\section{Conclusions}\label{sec:conclusions}
In contrast to single-stage robust optimization problems, which are typically solved via monolithic reformulations, there is growing evidence that two-stage and multi-stage robust optimization problems are best solved algorithmically~\cite{BD15:multistage_robust_mio,BG13:multistage_adaptive,Angele:BDRs,PdH15:multistage_adjustable_robust_mio,ZL13:column_and_constraint}. Our findings in this paper appear to confirm this observation, as our proposed branch-and-bound algorithm compares favorably with the reformulations proposed in~\cite{HKW15:rip}. In terms of modeling flexibility, our algorithm can accommodate mixed continuous and discrete decisions in both stages, can incorporate discrete uncertainty, and allows us to model flexible piecewise affine decision rules. At the same time, our numerical results indicate that the algorithmic approach is highly competitive in terms of computational performance as well. From a practical viewpoint, a notable feature of our algorithm is that it admits a lightweight implementation by integrating it into the branch-and-bound schemes of commercial solvers via branch callbacks, while allowing easy modification as a heuristic for large-scale instances.

Our results open up multiple fruitful avenues for future research. The scope of the presented algorithm can be broadened further by generalizing it to two-stage \emph{distributionally} robust optimization problems, where the uncertain problem parameters are modeled as random variables following a probability distribution that is only partially known. More ambitiously, it would be instructive to explore how the concept of $K$-adaptability, as well as our proposed branch-and-bound algorithm, extend to dynamic robust optimization problems with more than two stages.

\section*{Acknowledgments}
Anirudh Subramanyam and Chrysanthos E.~Gounaris gratefully acknowledge support from the National Science Foundation, award number CMMI-1434682. Wolfram Wiesemann gratefully acknowledges funding from the EPSRC grants EP/M028240/1, EP/M027856/1 and EP/N020030/1. Anirudh Subramanyam also acknowledges support from the John and Claire Bertucci Graduate Fellowship program at Carnegie Mellon University.

\bibliographystyle{plain}

\newpage

\begin{appendices}
\numberwithin{prop}{section}

\section{Analysis of the Two-Stage Robust Optimization Problem}

To analyze problem~\eqref{eq:two_stage_ro}, we equivalently rewrite it as
\begin{equation}\label{eq:two_stage_ro_explicit}
\begin{array}{r@{\quad}l@{\quad}r@{\; = \;}l}
\displaystyle \inf_{\bm{x} \in \mathcal{X}} \; \bm{c}^\top \bm{x} + \mathcal{Q} (\bm{x})
& \displaystyle \text{with}
& \displaystyle \mathcal{Q} (\bm{x}) & \displaystyle \sup_{\bm{\xi} \in \Xi} \; Q (\bm{x}, \bm{\xi}), \\
& \displaystyle \text{where}
& \displaystyle Q (\bm{x}, \bm{\xi}) & \displaystyle \inf_{\bm{y} \in \mathcal{Y}} \left\{ \bm{d} (\bm{\xi})^\top \bm{y} \, : \, \bm{T} (\bm{\xi}) \bm{x} + \bm{W} (\bm{\xi}) \bm{y} \leq \bm{h} (\bm{\xi}) \right\}
\end{array}
\tag{\ref{eq:two_stage_ro}$'$}
\end{equation}
for $\mathcal{Q} : \mathcal{X} \mapsto \mathbb{R} \cup \{ + \infty \}$ and $Q : \mathcal{X} \times \Xi \mapsto \mathbb{R} \cup \{ + \infty \}$.

We first investigate whether the infima and the supremum in problem~\eqref{eq:two_stage_ro_explicit} are attained.

\begin{prop}[Continuity]\label{prop:continuity_two_stage_ro}
Problem~\eqref{eq:two_stage_ro_explicit} satisfies the following properties.
\begin{enumerate}
\item[(i)] The problem $Q (\bm{x}, \bm{\xi})$ attains its infimum, if it is feasible.
\item[(ii)] The problem $\mathcal{Q} (\bm{x})$ attains its supremum, if only the objective function is uncertain. Otherwise, $\mathcal{Q} (\bm{x})$ does not necessarily attain its supremum, even if only the constraint right-hand sides are uncertain.
\item[(iii)] The problem~\eqref{eq:two_stage_ro_explicit} attains its infimum, if it is feasible.
\end{enumerate}
\end{prop}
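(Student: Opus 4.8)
The plan is to establish the three claims separately, relying throughout on two facts: that $\mathcal{X}$, $\mathcal{Y}$ and $\Xi$ are compact (being bounded and MILP representable, each is a finite union of polytopes, hence closed and bounded), and on the semicontinuous version of the Weierstrass theorem---a lower (resp.\ upper) semicontinuous function on a nonempty compact set attains its infimum (resp.\ supremum). Claim~(i) is the easiest: for fixed $\bm{x}$ and $\bm{\xi}$, the feasible set $\{\bm{y}\in\mathcal{Y} : \bm{T}(\bm{\xi})\bm{x}+\bm{W}(\bm{\xi})\bm{y}\le\bm{h}(\bm{\xi})\}$ is the intersection of the compact set $\mathcal{Y}$ with a polyhedron and is therefore compact; since the objective $\bm{d}(\bm{\xi})^\top\bm{y}$ is linear and hence continuous, Weierstrass delivers attainment of the infimum whenever this set is nonempty.

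For claim~(iii) I would show that the objective $\bm{x}\mapsto \bm{c}^\top\bm{x}+\mathcal{Q}(\bm{x})$ is lower semicontinuous on the compact set $\mathcal{X}$, after which Weierstrass again yields attainment (and feasibility makes the optimal value finite). The key step is that $\bm{x}\mapsto Q(\bm{x},\bm{\xi})$ is lower semicontinuous for each fixed $\bm{\xi}$: the feasible-set correspondence $\bm{x}\mapsto\{\bm{y}\in\mathcal{Y}:\bm{T}(\bm{\xi})\bm{x}+\bm{W}(\bm{\xi})\bm{y}\le\bm{h}(\bm{\xi})\}$ has a closed graph (as $\mathcal{Y}$ is closed and the data are continuous in $\bm{x}$) and a bounded range (as $\mathcal{Y}$ is bounded); a standard parametric-optimization argument---pick optimizers along a convergent sequence $\bm{x}^\ell\to\bm{\hat{x}}$, extract a convergent subsequence of optimizers, and pass to the limit using the closed graph---then shows that the minimal value cannot drop in the limit, giving lower semicontinuity (the case of an empty feasible set is handled by noting that emptiness persists on a neighborhood, so that $Q\equiv+\infty$ locally). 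Since $\mathcal{Q}(\bm{x})=\sup_{\bm{\xi}\in\Xi}Q(\bm{x},\bm{\xi})$ is a pointwise supremum of lower semicontinuous functions it is itself lower semicontinuous, and adding the continuous term $\bm{c}^\top\bm{x}$ preserves this.

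The positive part of claim~(ii) exploits that when only the objective is uncertain the feasible set $\{\bm{y}\in\mathcal{Y}:\bm{T}\bm{x}+\bm{W}\bm{y}\le\bm{h}\}$ no longer depends on $\bm{\xi}$; consequently $Q(\bm{x},\bm{\xi})$ is a pointwise infimum over $\bm{y}$ of functions that are affine (hence continuous) in $\bm{\xi}$, and is therefore concave and upper semicontinuous in $\bm{\xi}$. Maximizing an upper semicontinuous function over the compact set $\Xi$ attains its supremum (and the degenerate case of an empty feasible set gives $Q\equiv+\infty$, for which attainment is trivial).

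The negative part of claim~(ii) I would settle with an explicit counterexample in which only the right-hand side is uncertain. Consider the instance with no first-stage decisions whose evaluation problem is $\mathcal{Q}=\sup_{\xi\in[0,1]}Q(\xi)$ with
\[
Q(\xi)=\inf_{(y_b,y_c)\in\{0,1\}\times[0,1]}\big\{\,y_c \;:\; y_c+y_b\ge 1-\xi,\ \ y_b\le 1-\xi\,\big\}.
\]
For $\xi\in(0,1]$ the second constraint forces $y_b=0$, so that $Q(\xi)=1-\xi$, whereas at $\xi=0$ the choice $y_b=1$ becomes admissible and yields $Q(0)=0$. Hence $\mathcal{Q}=\sup_{\xi\in[0,1]}Q(\xi)=1$ is approached as $\xi\to 0^+$ but attained at no $\xi\in[0,1]$; the integrality of $y_b$ makes $Q$ lower semicontinuous but not upper semicontinuous, which is exactly what destroys attainment. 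I expect the two main obstacles to be getting the \emph{direction} of semicontinuity correct in claim~(iii)---the value function of a minimization is lower (not upper) semicontinuous under the closed-graph/bounded-range hypotheses---and designing the counterexample in claim~(ii) so that the worst-case value is genuinely approached yet never met.
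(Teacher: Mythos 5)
Your proposal is correct and follows essentially the same route as the paper: compactness plus the semicontinuous Weierstrass theorem for claims (i) and (iii), upper semicontinuity of the second-stage value function in $\bm{\xi}$ when only the objective is uncertain for the positive part of (ii), and an integer-recourse counterexample with right-hand-side uncertainty for the negative part of (ii). The only differences are cosmetic: the paper gets upper semicontinuity in (ii) by passing to the finitely many extreme points of $\text{conv}\, \mathcal{Y}_{\bm{W}}$ (you use the more general fact that an arbitrary infimum of continuous functions is upper semicontinuous), it proves (iii) by citing Rockafellar--Wets for preservation of lower semicontinuity under partial minimization and maximization (you give a self-contained sequential argument), and your counterexample---which checks out, since $Q(\xi) = 1 - \xi$ for $\xi \in (0,1]$ while $Q(0) = 0$, so the supremum $1$ is unattained---differs from the paper's instance but makes the identical point.
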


\begin{proof}
The first statement holds since the problem $Q (\bm{x}, \bm{\xi})$ minimizes an affine function in $\bm{y}$ over the intersection of the compact set $\mathcal{Y}$ and the polyhedron $\{ \bm{y} \in \mathbb{R}^{N_2} \, : \, \bm{W} (\bm{\xi}) \bm{y} \leq \bm{h} (\bm{\xi}) - \bm{T} (\bm{\xi}) \bm{x} \}$.

In view of the second statement, assume first that only the objective function in problem~\eqref{eq:two_stage_ro_explicit} is uncertain; that is, $\bm{T} (\bm{\xi}) = \bm{T}$, $\bm{W} (\bm{\xi}) = \bm{W}$ and $\bm{h} (\bm{\xi}) = \bm{h}$ for all $\bm{\xi} \in \Xi$. Then the inner problem $Q (\bm{x}, \bm{\xi})$ simplifies to 
$Q (\bm{x}, \bm{\xi}) = \inf \{ \bm{d} (\bm{\xi})^\top \bm{y} \, : \, \bm{y} \in \mathcal{Y}_{\bm{W}} \}$ with $\mathcal{Y}_{\bm{W}} = \left\{ \bm{y} \in \mathcal{Y} \, : \, \bm{W} \bm{y} \leq \bm{h} - \bm{T} \bm{x} \right\}$. If $\mathcal{Y}_{\bm{W}} = \emptyset$, then $Q (\bm{x}, \bm{\xi}) = + \infty$ for all $\bm{\xi} \in \Xi$, and any $\bm{\xi} \in \Xi$ attains the supremum. Otherwise, we have $Q (\bm{x}, \bm{\xi}) = \inf \{ \bm{d} (\bm{\xi})^\top \bm{y} \, : \, \bm{y} \in \text{ext conv } \mathcal{Y}_{\bm{W}} \}$, where $\text{ext conv } \mathcal{Y}_{\bm{W}}$ denotes the set of (finitely many) extreme points of the convex hull of $\mathcal{Y}_{\bm{W}}$. We thus conclude from~\cite[Proposition~1.26]{RW09:variational_analysis} that $Q (\bm{x}, \bm{\xi})$ is upper semicontinuous in $\bm{\xi}$ for every fixed $\bm{x}$, and~\cite[Theorem~1.9]{RW09:variational_analysis} then implies that $\mathcal{Q} (\bm{x})$ attains its supremum since $\Xi$ is a compact set.

Assume now that only the constraint right-hand sides in problem~\eqref{eq:two_stage_ro_explicit} are allowed to be uncertain, and consider the following instance of problem $\mathcal{Q} (\bm{x})$:
\begin{equation*}
\sup_{\xi \in [0,1]} \; \inf_{\substack{\tau \in \mathbb{R}, \\ y \in \{ 0, 1 \}}} \left\{ \tau - y \, : \, y \leq \xi \leq \tau \right\}
\end{equation*}
The inner minimization problem is optimized by $\tau^\star = \xi$ and $y^\star = \left \lfloor \xi \right \rfloor$. The supremum is $1$, and it is approached by the sequence of feasible solutions $\xi \uparrow 1$. Note, however, that the limit point $\xi^\star = 1$ of this sequence results in an objective function value of $0$.

As for the third statement, we first note that the extended real-valued function
\begin{equation*}
Q (\bm{x}, \bm{\xi}, \bm{y}) =
\begin{cases}
\bm{d} (\bm{\xi})^\top \bm{y} & \text{if } \bm{y} \in \mathcal{Y} \text{ and } \bm{T} (\bm{\xi}) \bm{x} + \bm{W} (\bm{\xi}) \bm{y} \leq \bm{h} (\bm{\xi}), \\
+ \infty & \text{otherwise}
\end{cases}
\end{equation*}
is lower semicontinuous in $(\bm{x}, \bm{\xi}, \bm{y})$ since the set $\{ (\bm{x}, \bm{\xi}, \bm{y}) \in \mathbb{R}^{N_1} \times \mathbb{R}^{N_p} \times \mathcal{Y} \, : \, \bm{T} (\bm{\xi}) \bm{x} + \bm{W} (\bm{\xi}) \bm{y} \leq \bm{h} (\bm{\xi}) \}$ is closed. From~\cite[Theorem~1.17]{RW09:variational_analysis} and~\cite[Proposition~1.26]{RW09:variational_analysis} we conclude that the lower semicontinuity is preserved by the partial minimization over $\bm{y}$ and the partial maximization over $\bm{\xi}$. By~\cite[Theorem~1.9]{RW09:variational_analysis} and the fact that $\mathcal{X}$ is compact we can then conclude that the problem~\eqref{eq:two_stage_ro_explicit} attains its infimum whenever it is feasible.
\end{proof}

It is shown in~\cite[Example~1]{HKW15:rip} that $\mathcal{Q} (\bm{x})$ may not attain its supremum if both the objective function and the constraint right-hand sides in problem~\eqref{eq:two_stage_ro_explicit} are uncertain. Proposition~\ref{prop:continuity_two_stage_ro}~(ii) strengthens this observation to instances of problem~\eqref{eq:two_stage_ro_explicit} where only the constraint right-hand sides are uncertain.
We now consider the convexity properties of problem~\eqref{eq:two_stage_ro_explicit}.

\begin{prop}[Convexity]\label{prop:convexity_two_stage_ro}
Problem~\eqref{eq:two_stage_ro_explicit} satisfies the following properties.
\begin{enumerate}
\item[(i)] The problem $Q (\bm{x}, \bm{\xi})$ is convex, if $\mathcal{Y}$ is convex.
\item[(ii)] The problem $\mathcal{Q} (\bm{x})$ is convex, if $\Xi$ is convex and only the objective function is uncertain, irrespective of $\mathcal{Y}$. Otherwise, $\mathcal{Q} (\bm{x})$ is typically not convex, even if $\Xi$ and $\mathcal{Y}$ are convex and only the constraint right-hand sides are uncertain.
\item[(iii)] The problem~\eqref{eq:two_stage_ro_explicit} is convex, if $\mathcal{X}$ and $\mathcal{Y}$ are convex, irrespective of $\Xi$.
\end{enumerate}
\end{prop}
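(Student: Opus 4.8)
The plan is to handle the three statements in order of increasing difficulty, leaning throughout on two elementary facts: the pointwise supremum of convex functions is convex, and the pointwise infimum of affine functions is concave.

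Statement (i) is immediate. For fixed $\bm{x}$ and $\bm{\xi}$ the second-stage problem $Q(\bm{x}, \bm{\xi})$ minimizes the affine (hence convex) objective $\bm{d}(\bm{\xi})^\top \bm{y}$ over the set $\mathcal{Y} \cap \{ \bm{y} : \bm{W}(\bm{\xi}) \bm{y} \leq \bm{h}(\bm{\xi}) - \bm{T}(\bm{\xi}) \bm{x} \}$. The second factor is a polyhedron and is therefore convex, so the feasible region is convex whenever $\mathcal{Y}$ is convex; minimizing a convex function over a convex set yields a convex optimization problem, which settles (i).

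For statement (iii) I would show that $\mathcal{Q}$ is convex in $\bm{x}$ and then append the affine term $\bm{c}^\top \bm{x}$ and the convex restriction $\bm{x} \in \mathcal{X}$. First I would fix $\bm{\xi}$ and argue that $Q(\cdot, \bm{\xi})$ is convex in $\bm{x}$ by partial minimization. To this end, consider the extended-real-valued function that equals $\bm{d}(\bm{\xi})^\top \bm{y}$ on the set $\{ (\bm{x}, \bm{y}) : \bm{y} \in \mathcal{Y}, \; \bm{T}(\bm{\xi}) \bm{x} + \bm{W}(\bm{\xi}) \bm{y} \leq \bm{h}(\bm{\xi}) \}$ and $+\infty$ elsewhere. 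For fixed $\bm{\xi}$ this function is jointly convex in $(\bm{x}, \bm{y})$, since its objective is affine and its effective domain is the intersection of the convex set $\mathcal{Y}$ (in the $\bm{y}$-block) with a polyhedron that is linear in $(\bm{x}, \bm{y})$ jointly. Partial minimization of a jointly convex function preserves convexity (see~\cite{RW09:variational_analysis}), so $Q(\cdot, \bm{\xi})$ is convex in $\bm{x}$; taking the pointwise supremum over $\bm{\xi} \in \Xi$ then shows $\mathcal{Q}(\bm{x}) = \sup_{\bm{\xi} \in \Xi} Q(\bm{x}, \bm{\xi})$ is convex. This step requires nothing of $\Xi$, because a supremum of convex functions is convex over an arbitrary index set, which is the source of the ``irrespective of $\Xi$'' qualifier.

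Statement (ii) concerns a genuinely different notion of convexity: whether the evaluation problem $\mathcal{Q}(\bm{x}) = \sup_{\bm{\xi} \in \Xi} Q(\bm{x}, \bm{\xi})$ is a convex optimization problem in the maximization variable $\bm{\xi}$, which holds precisely when $Q(\bm{x}, \cdot)$ is concave and $\Xi$ is convex. When only the objective is uncertain, the feasible set $\mathcal{Y}_{\bm{W}} = \{ \bm{y} \in \mathcal{Y} : \bm{W} \bm{y} \leq \bm{h} - \bm{T} \bm{x} \}$ is independent of $\bm{\xi}$, so $Q(\bm{x}, \bm{\xi}) = \inf_{\bm{y} \in \mathcal{Y}_{\bm{W}}} \bm{d}(\bm{\xi})^\top \bm{y}$ is a pointwise infimum of functions affine in $\bm{\xi}$ and hence concave; this holds irrespective of $\mathcal{Y}$ because it invokes no structure of the fixed minimization domain. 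For the negative part I would exhibit a small instance with uncertain right-hand side whose value function is convex rather than concave in $\bm{\xi}$: minimizing $y$ subject to $y \geq \xi$ and $y \geq 1 - \xi$ over $y \in \mathbb{R}_+$ with $\xi \in [0,1]$ gives $Q(\xi) = \max\{\xi, 1 - \xi\}$, which is convex and strictly non-concave, so the corresponding maximization over the convex set $\Xi = [0,1]$ is not a convex program even though $\mathcal{Y}$ and $\Xi$ are convex. The technical heart of the proposition is the partial-minimization argument in (iii), but the main hurdle is conceptual: one must keep the two senses of convexity apart, since the very same $\mathcal{Q}$ is always convex in $\bm{x}$ by (iii), yet the maximization defining it need not be a convex program, and it is exactly this latter property that the counterexample defeats.
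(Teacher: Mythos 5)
Your proof is correct and follows essentially the same route as the paper's: linearity over a convex feasible set for (i), concavity of the value function in $\bm{\xi}$ (infimum of affine functions) plus a right-hand-side counterexample whose value function is a ``max of two affine functions'' for (ii), and joint convexity with partial minimization followed by a pointwise supremum for (iii). The only cosmetic differences are that the paper routes the concavity argument in (ii) through the extreme points of $\text{conv}\,\mathcal{Y}_{\bm{W}}$ and uses the counterexample $\sup_{\xi \in [-1,1]} |\xi|$ instead of your shifted variant $\sup_{\xi \in [0,1]} \max\{\xi, 1-\xi\}$; both of your shortcuts are valid.
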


\begin{proof}
The first statement directly follows from the linearity of the objective function and the convexity of the feasible set.

In view of the second statement, assume first that $\Xi$ is convex and only the objective function in problem~\eqref{eq:two_stage_ro_explicit} is uncertain; that is, $\bm{T} (\bm{\xi}) = \bm{T}$, $\bm{W} (\bm{\xi}) = \bm{W}$ and $\bm{h} (\bm{\xi}) = \bm{h}$ for all $\bm{\xi} \in \Xi$. Then the inner problem $Q (\bm{x}, \bm{\xi})$ simplifies to $Q (\bm{x}, \bm{\xi}) = \inf \{ \bm{d} (\bm{\xi})^\top \bm{y} \, : \, \bm{y} \in \mathcal{Y}_{\bm{W}} \}$ with $\mathcal{Y}_{\bm{W}} = \left\{ \bm{y} \in \mathcal{Y} \, : \, \bm{W} \bm{y} \leq \bm{h} - \bm{T} \bm{x} \right\}$. Assume that $\mathcal{Y}_{\bm{W}} \neq \emptyset$; the other case is trivial. Then we have $Q (\bm{x}, \bm{\xi}) = \inf \{ \bm{d} (\bm{\xi})^\top \bm{y} \, : \, \bm{y} \in \text{ext conv } \mathcal{Y}_{\bm{W}} \}$, where $\text{ext conv } \mathcal{Y}_{\bm{W}}$ denotes the set of (finitely many) extreme points of the convex hull of $\mathcal{Y}_{\bm{W}}$. We thus conclude from~\cite[Proposition~2.9]{RW09:variational_analysis} that $Q (\bm{x}, \bm{\xi})$ is concave in $\bm{\xi}$ for every fixed $\bm{x}$, which implies that $\mathcal{Q} (\bm{x})$ is a convex optimization problem.

Assume now that the constraint right-hand sides in problem~\eqref{eq:two_stage_ro_explicit} are allowed to be uncertain, and consider the following instance of problem $\mathcal{Q} (\bm{x})$:
\begin{equation*}
\sup_{\xi \in [-1, 1]} \; \inf_{y \in \mathbb{R}} \left\{ y : \ y \geq \xi, \;\; y \geq -\xi \right\}
\end{equation*}
Since the inner minimization problem is optimized by $y^\star = | \xi |$, the problem maximizes the (convex) $1$-norm of $\xi$ over the interval $[-1, 1]$, which amounts to a non-convex optimization problem.

As for the third statement, assume that $\mathcal{X}$ and $\mathcal{Y}$ are convex, and consider the function
\begin{equation*}
Q (\bm{x}, \bm{\xi}, \bm{y}) =
\begin{cases}
\bm{d} (\bm{\xi})^\top \bm{y} & \text{if } \bm{y} \in \mathcal{Y} \text{ and } \bm{T} (\bm{\xi}) \bm{x} + \bm{W} (\bm{\xi}) \bm{y} \leq \bm{h} (\bm{\xi}), \\
+ \infty & \text{otherwise}.
\end{cases}
\end{equation*}
The function $Q (\bm{x}, \bm{\xi}, \bm{y})$ is convex in $(\bm{x}, \bm{y})$ for every fixed $\bm{\xi} \in \Xi$. From~\cite[Proposition~2.22]{RW09:variational_analysis} and~\cite[Proposition~2.9]{RW09:variational_analysis} we conclude that the convexity is preserved by the partial minimization over $\bm{y}$ and the partial maximization over $\bm{\xi}$. The problem~\eqref{eq:two_stage_ro_explicit} thus minimizes the sum of two convex functions $\bm{c}^\top \bm{x}$ and $\mathcal{Q} (\bm{x})$ over the convex set $\mathcal{X}$, which is a convex optimization problem.
\end{proof}

One readily verifies that the third statement in Proposition~\ref{prop:convexity_two_stage_ro} does not hold if $\mathcal{Y}$ is not convex.
%
%
We now investigate under which conditions we can solve problem~\eqref{eq:two_stage_ro_explicit} in polynomial time.

\begin{prop}[Tractability]\label{prop:tractability_two_stage_ro}
Problem~\eqref{eq:two_stage_ro_explicit} satisfies the following properties.
\begin{enumerate}
\item[(i)] The problem $Q (\bm{x}, \bm{\xi})$ can be solved in polynomial time, if $\mathcal{Y}$ is convex.
\item[(ii)] The problem $\mathcal{Q} (\bm{x})$ can be solved in polynomial time, if $\Xi$ and $\mathcal{Y}$ are convex and only the objective function is uncertain. Otherwise,  $\mathcal{Q} (\bm{x})$ is strongly NP-hard, even if $\Xi$ and $\mathcal{Y}$ are convex and only the constraint right-hand sides are uncertain.
\item[(iii)] The problem~\eqref{eq:two_stage_ro_explicit} can be solved in polynomial time, if $\mathcal{X}$, $\mathcal{Y}$ and $\Xi$ are convex and only the objective function is uncertain. Otherwise, the problem is strongly NP-hard, even if $\mathcal{X}$, $\mathcal{Y}$ and $\Xi$ are convex and only the constraint right-hand sides are uncertain.
\end{enumerate}
\end{prop}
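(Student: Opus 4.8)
The plan is to dispatch the three claims separately, using the convexity results of Proposition~\ref{prop:convexity_two_stage_ro} for the polynomial-time (positive) statements and an explicit reduction for the hardness (negative) statements. Part~(i) is immediate: when $\mathcal{Y}$ is convex it is LP representable, so $Q (\bm{x}, \bm{\xi})$ minimizes the affine function $\bm{d}(\bm{\xi})^\top \bm{y}$ over the intersection of a polytope and an affine set and is therefore a linear program, which is solvable in polynomial time.

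For the positive statement in~(ii) I would invoke Proposition~\ref{prop:convexity_two_stage_ro}~(ii): when $\Xi$ is convex and only the objective is uncertain, $Q (\bm{x}, \cdot)$ is concave on $\Xi$, so $\mathcal{Q} (\bm{x}) = \sup_{\bm{\xi} \in \Xi} Q (\bm{x}, \bm{\xi})$ is a concave maximization over a convex set. The two ingredients needed to run the ellipsoid method in polynomial time are a separation oracle for $\Xi$, which is available because $\Xi$ is convex and bounded, and a first-order oracle, which follows from part~(i): evaluating $Q (\bm{x}, \bm{\xi})$ is an LP, and since $Q (\bm{x}, \bm{\xi}) = \min_{\bm{y} \in \mathcal{Y}_{\bm{W}}} \bm{d} (\bm{\xi})^\top \bm{y}$ is a pointwise minimum of affine functions of $\bm{\xi}$, any optimal recourse $\bm{y}^\star$ furnishes the supergradient $\bm{d} (\cdot)^\top \bm{y}^\star$. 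The positive statement in~(iii) is analogous: by Proposition~\ref{prop:convexity_two_stage_ro}~(iii) the map $\bm{x} \mapsto \bm{c}^\top \bm{x} + \mathcal{Q} (\bm{x})$ is convex on the convex set $\mathcal{X}$, it is evaluated through part~(ii), and a subgradient in $\bm{x}$ is obtained from an optimal dual multiplier $\bm{\pi}^\star$ of the inner LP at the maximizing $\bm{\xi}^\star$, namely $-\bm{T}^\top \bm{\pi}^\star$. A second application of the ellipsoid method then solves the outer problem in polynomial time.

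The crux is the hardness statement, for which I would construct an explicit, strongly NP-hard reduction in which $\mathcal{X}$, $\mathcal{Y}$ and $\Xi$ are convex and only the right-hand sides are uncertain. The natural source problem is \textsc{Max-Cut}. Given a graph $G = (V, E)$ with $V = \{ 1, \ldots, n \}$, I set $\Xi = [0,1]^n$, introduce recourse variables $\bm{y} \in \mathcal{Y} = [0,1]^{|E|}$ with deterministic objective $\bm{d} = \mathbf{e}$, and impose for each edge $(i,j) \in E$ the two constraints $-y_{ij} \leq \xi_j - \xi_i$ and $-y_{ij} \leq \xi_i - \xi_j$, whose right-hand sides are affine in $\bm{\xi}$ while $\bm{W}$ and $\bm{d}$ remain deterministic. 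Taking $\mathcal{X} = \{ \bm{0} \}$, $\bm{c} = \bm{0}$ and $\bm{T} = \bm{0}$, the inner minimization yields $Q (\bm{x}, \bm{\xi}) = \sum_{(i,j) \in E} |\xi_i - \xi_j|$, a convex piecewise affine function of $\bm{\xi}$; hence its maximum over the box $\Xi$ is attained at a vertex $\bm{\xi} \in \{0,1\}^n$ and equals the \textsc{Max-Cut} value of $G$. Since \textsc{Max-Cut} with unit weights is strongly NP-hard, computing $\mathcal{Q} (\bm{x})$ is strongly NP-hard, which establishes~(ii); because this instance is itself an instance of problem~\eqref{eq:two_stage_ro_explicit}, the hardness is inherited and~(iii) follows.

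The hard part will be precisely this reduction: it must simultaneously respect all three structural restrictions (convex $\mathcal{X}$, $\mathcal{Y}$ and $\Xi$, and uncertainty confined to the right-hand sides) while remaining \emph{exact}, so that the continuous recourse reproduces the combinatorial \textsc{Max-Cut} objective with no integrality gap. The key observation that makes the convexity restrictions harmless for hardness is that a convex function is maximized over a box at one of its vertices, so the box relaxation of \textsc{Max-Cut} is tight; verifying this tightness and that the encoding above indeed realizes $\sum_{(i,j) \in E} |\xi_i - \xi_j|$ exactly is the decisive step.
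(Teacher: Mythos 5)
Your proof is correct, but it takes a genuinely different route from the paper's on both the positive and the negative halves. For the polynomial-time statements in (ii) and (iii), the paper does not use the ellipsoid method at all: it invokes the minimax argument of Proposition~\ref{prop:static_two_stage_ro} to collapse the two-stage problem (convex $\mathcal{Y}$, $\Xi$, objective-only uncertainty) into a static robust problem over $\mathrm{conv}\,\mathcal{Y} = \mathcal{Y}$, and then dualizes the inner maximization over $\Xi$ to obtain a single explicit linear program; the positive part of (ii) is then obtained from (iii) simply by fixing $\mathcal{X} = \{\bm{x}\}$ and $\bm{c} = \bm{0}$. That route buys exactness and avoids the oracle and precision bookkeeping your scheme needs --- in your part (iii) the outer ellipsoid consumes subgradients built from maximizers $\bm{\xi}^\star$ that your inner ellipsoid only returns approximately, so you would have to argue with inexact oracles --- while your route, in exchange, needs only separation and evaluation oracles and never carries out a dualization. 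For the hardness statements, the paper reduces from 0/1 Integer Programming feasibility rather than \textsc{Max-Cut}: it takes $\Xi = \{\bm{\xi}\in[0,1]^{N_p} : \bm{A}\bm{\xi}\leq\bm{b}\}$ and an inner LP whose value is $\lVert\bm{\xi}-\mathbf{e}/2\rVert_1$, so that $\mathcal{Q}(\bm{x}) = N_p/2$ if and only if the polytope contains a binary point. Your reduction is the mirror image: you keep $\Xi$ a plain box and push the combinatorial structure (the graph) into the recourse constraints, the inner LP evaluating $\sum_{(i,j)\in E}\lvert\xi_i-\xi_j\rvert$; exactness then holds, as you argue, because a convex function attains its maximum over $[0,1]^n$ at a vertex, where the objective counts cut edges, and \textsc{Max-Cut} with unit weights is strongly NP-hard. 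Both reductions respect all convexity restrictions and yield (iii) by appending a singleton $\mathcal{X}$ (yours even keeps $\mathcal{Y}$ bounded, which the paper's $\bm{y}\in\mathbb{R}^{N_p}$ formally does not). The main thing the paper's choice buys is uniformity: essentially the same IP-feasibility gadget is recycled, in approximate form, in Proposition~\ref{prop:tractability_k_adaptability} for the $K$-adaptability problem, which makes the two hardness results directly comparable.
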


\begin{proof}
If $\mathcal{Y}$ is convex, then the problem $Q (\bm{x}, \bm{\xi})$ amounts to a linear program that can be solved in polynomial time. This shows the first statement.

The first part of the second statement follows from the proof of the first part of the third statement below if we fix $\mathcal{X} = \{ \bm{x} \}$ and $\bm{c} = \bm{0}$ in problem~\eqref{eq:two_stage_ro_explicit}.
For the second part of the second statement, we recall the strongly NP-hard 0/1 Integer Programming (IP) feasibility problem~\cite{GJ79:ComputersIntractability}: \\[-2mm]

\fbox{\parbox{15cm}{ {\centering \textsc{0/1 Integer Programming Feasibility.}\\[-8mm]}
\vphantom{a}~~~\textbf{Instance.} Given are $\bm{A} \in \mathbb{Z}^{R \times N_p}$ and $\bm{b} \in \mathbb{Z}^R$. \\
\vphantom{a}~~~\textbf{Question.} Is there a vector $\bm{\xi} \in \left\{ 0, 1 \right\}^{N_p}$ such that $\bm{A} \bm{\xi} \leq \bm{b}$?
}} \\

We show that the IP feasibility problem has an affirmative answer if and only if the problem
\begin{equation}\label{eq:two_stage_ro_intractable_problem}
\sup \left\{ \inf \left\{ \mathbf{e}^\top \bm{y} \, : \, \bm{y} \in \mathbb{R}^{N_p}, \;\; \bm{y} \geq \bm{\xi} - \frac{1}{2} \mathbf{e}, \;\; \bm{y} \geq \frac{1}{2} \mathbf{e} - \bm{\xi} \right\} \, : \, \bm{\xi} \in [0, 1]^{N_p}, \;\; \bm{A} \bm{\xi} \leq \bm{b} \right\}
\end{equation}
has an optimal value of $N_p / 2$. Note that~\eqref{eq:two_stage_ro_intractable_problem} can be interpreted as an instance of $\mathcal{Q} (\bm{x})$. For any fixed $\bm{\xi}$, the infimum in~\eqref{eq:two_stage_ro_intractable_problem} evaluates to $\left \lVert \bm{\xi} - \mathbf{e} / 2 \right \rVert_1$. Thus, the optimal value of~\eqref{eq:two_stage_ro_intractable_problem} is equal to $N_p / 2$ if and only if there is $\bm{\xi} \in [0, 1]^{N_p}$ satisfying $\left \lVert \bm{\xi} - \mathbf{e} / 2 \right \rVert_1 = N_p / 2$ and $\bm{A} \bm{\xi} \leq \bm{b}$. The statement now follows from the fact that $\left \lVert \bm{\xi} - \mathbf{e} / 2 \right \rVert_1 = N_p / 2$ if and only if $\bm{\xi} \in \{ 0, 1 \}^{N_p}$.

In view of the first part of the third statement, the proof of Proposition~\ref{prop:static_two_stage_ro} below shows that, under the stated assumptions, problem~\eqref{eq:two_stage_ro_explicit} reduces to a single-stage robust optimization problem. Dualizing the inner maximization problem in that single-stage reformulation allows us to solve the overall problem in polynomial time as a linear program, see \cite{BTEGN09:rob_opt}. Finally, the second part of the third statement follows from the proof of the second part of the second statement if we amend problem~\eqref{eq:two_stage_ro_intractable_problem} by appending an outer infimum over the singleton set $\mathcal{X} = \{ 1 \}$.
\end{proof}

The NP-hardness of the two-stage robust optimization problem~\eqref{eq:two_stage_ro_explicit} has previously been established for the case where $\mathcal{X}$, $\mathcal{Y}$ and $\Xi$ are convex and only the constraint right-hand sides are uncertain \cite[Theorem~3.5]{Gus02:aro_thesis}. We provide an alternative proof in Proposition~\ref{prop:tractability_two_stage_ro}~(iii) to facilitate a self-contained comparison with the $K$-adaptability problem~\eqref{eq:k_adaptability} in Proposition~\ref{prop:tractability_k_adaptability}~(ii) below. Moreover, it is shown in~\cite[Theorem~3.3]{Gus02:aro_thesis} that problem~\eqref{eq:two_stage_ro_explicit} can be solved in polynomial time whenever $\mathcal{X}$ and $\mathcal{Y}$ are convex, $\bm{d}$ and $\bm{W}$ are deterministic and $\Xi$ is described in terms of its extreme points.
We close our analysis of the two-stage robust optimization problem~\eqref{eq:two_stage_ro_explicit} with a special case where it reduces to a single-stage robust optimization problem.

\begin{prop}[Reduction to Static Problem]\label{prop:static_two_stage_ro}
The problem~\eqref{eq:two_stage_ro_explicit} reduces to a static robust optimization problem where $\mathcal{Y}$ is replaced with its convex hull, if $\Xi$ is convex and only the objective function is uncertain, irrespective of $\mathcal{X}$ and $\mathcal{Y}$.
\end{prop}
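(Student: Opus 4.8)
The plan is to establish the reduction separately for each fixed first-stage decision $\bm{x} \in \mathcal{X}$ and then reassemble the outer minimization. Fix $\bm{x} \in \mathcal{X}$. Since only the objective is uncertain, we have $\bm{T}(\bm{\xi}) = \bm{T}$, $\bm{W}(\bm{\xi}) = \bm{W}$ and $\bm{h}(\bm{\xi}) = \bm{h}$, so the second-stage feasible set $\mathcal{Y}_{\bm{W}} = \{\bm{y} \in \mathcal{Y} : \bm{T}\bm{x} + \bm{W}\bm{y} \leq \bm{h}\}$ no longer depends on $\bm{\xi}$, and $Q(\bm{x}, \bm{\xi}) = \inf\{\bm{d}(\bm{\xi})^\top \bm{y} : \bm{y} \in \mathcal{Y}_{\bm{W}}\}$. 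If $\mathcal{Y}_{\bm{W}} = \emptyset$, then $Q(\bm{x}, \bm{\xi}) = +\infty$ for every $\bm{\xi}$ and $\text{conv}\,\mathcal{Y}_{\bm{W}} = \emptyset$ as well, so both the two-stage and the static problem assign $\bm{x}$ the value $+\infty$; I may therefore assume $\mathcal{Y}_{\bm{W}} \neq \emptyset$.

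First I would convexify the inner minimization. Because $\bm{d}(\bm{\xi})^\top \bm{y}$ is affine in $\bm{y}$, minimizing it over $\mathcal{Y}_{\bm{W}}$ is equivalent to minimizing it over $\text{conv}\,\mathcal{Y}_{\bm{W}}$, whence $\mathcal{Q}(\bm{x}) = \sup_{\bm{\xi} \in \Xi} \inf_{\bm{y} \in \text{conv}\,\mathcal{Y}_{\bm{W}}} \bm{d}(\bm{\xi})^\top \bm{y}$. The next and central step is to interchange the supremum and the infimum. The set $\text{conv}\,\mathcal{Y}_{\bm{W}}$ is convex and---being the convex hull of the compact set $\mathcal{Y}_{\bm{W}} = \mathcal{Y} \cap \{\bm{y} : \bm{W}\bm{y} \leq \bm{h} - \bm{T}\bm{x}\}$ in a finite-dimensional space---also compact, while $\Xi$ is convex and compact by assumption; moreover, since $\bm{d}$ is affine, the map $(\bm{\xi}, \bm{y}) \mapsto \bm{d}(\bm{\xi})^\top \bm{y}$ is concave (indeed affine) in $\bm{\xi}$ and convex (indeed affine) in $\bm{y}$. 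A minimax theorem (e.g., Sion's) therefore yields
\begin{equation*}
\mathcal{Q}(\bm{x}) = \sup_{\bm{\xi} \in \Xi} \inf_{\bm{y} \in \text{conv}\,\mathcal{Y}_{\bm{W}}} \bm{d}(\bm{\xi})^\top \bm{y} = \inf_{\bm{y} \in \text{conv}\,\mathcal{Y}_{\bm{W}}} \sup_{\bm{\xi} \in \Xi} \bm{d}(\bm{\xi})^\top \bm{y}.
\end{equation*}
In the right-hand expression the candidate policy $\bm{y}$ is selected before the realization of $\bm{\xi}$, so substituting it back into $\inf_{\bm{x} \in \mathcal{X}} \bm{c}^\top \bm{x} + \mathcal{Q}(\bm{x})$ expresses problem~\eqref{eq:two_stage_ro_explicit} as the single-stage robust problem $\inf_{\bm{x} \in \mathcal{X},\, \bm{y} \in \text{conv}\,\mathcal{Y}_{\bm{W}}} \{\bm{c}^\top \bm{x} + \sup_{\bm{\xi} \in \Xi} \bm{d}(\bm{\xi})^\top \bm{y}\}$, in which the second-stage set has been replaced by its convex hull.

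I expect the minimax interchange to be the crux, and the point requiring the most care is \emph{which} set is convexified: the identity $\inf_{\mathcal{Y}_{\bm{W}}} = \inf_{\text{conv}\,\mathcal{Y}_{\bm{W}}}$ for a linear objective applies to the entire constraint-restricted set $\mathcal{Y}_{\bm{W}}$, and one must resist the temptation to convexify $\mathcal{Y}$ in isolation, since $\text{conv}(\mathcal{Y} \cap \{\bm{W}\bm{y} \leq \bm{h} - \bm{T}\bm{x}\})$ can be a strict subset of $\text{conv}(\mathcal{Y}) \cap \{\bm{W}\bm{y} \leq \bm{h} - \bm{T}\bm{x}\}$ and the two yield different optimal values. (When $\mathcal{Y}$ is itself convex the distinction disappears, which is the regime exploited in Proposition~\ref{prop:tractability_two_stage_ro}.) It remains to confirm that the hypotheses of the chosen minimax theorem are genuinely met---compactness of $\text{conv}\,\mathcal{Y}_{\bm{W}}$ and convexity of $\Xi$ are exactly what the objective-only-uncertain assumption buys us---and that the convexity of $\mathcal{X}$ plays no role, consistent with the claim holding irrespective of $\mathcal{X}$ and $\mathcal{Y}$.
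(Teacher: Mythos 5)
Your proof is correct, and at the level of its skeleton it follows the same route as the paper's: convexify the second-stage feasible set (legitimate because the objective is linear in $\bm{y}$), then invoke a minimax theorem to pull the minimization over $\bm{y}$ outside the supremum over $\bm{\xi}$ (legitimate because the convexified set and $\Xi$ are convex and compact and the objective is bilinear). The genuine difference is the one you single out at the end: \emph{which} set gets convexified. The paper's proof replaces $\mathcal{Y}$ by $\text{conv}\,\mathcal{Y}$ while retaining $\bm{T}\bm{x} + \bm{W}\bm{y} \leq \bm{h}$ as a side constraint, i.e., it performs the minimax interchange over $\text{conv}(\mathcal{Y}) \cap \{\bm{y} : \bm{T}\bm{x} + \bm{W}\bm{y} \leq \bm{h}\}$, whereas you work over $\text{conv}\,\mathcal{Y}_{\bm{W}}$ with $\mathcal{Y}_{\bm{W}} = \mathcal{Y} \cap \{\bm{y} : \bm{T}\bm{x} + \bm{W}\bm{y} \leq \bm{h}\}$. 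Your scruple is exactly right, and it is not merely cosmetic: linearity of the objective only yields $\inf_{\mathcal{Y}_{\bm{W}}} = \inf_{\text{conv}\,\mathcal{Y}_{\bm{W}}}$, and since $\text{conv}\,\mathcal{Y}_{\bm{W}}$ can be strictly contained in $\text{conv}(\mathcal{Y}) \cap \{\bm{y} : \bm{T}\bm{x} + \bm{W}\bm{y} \leq \bm{h}\}$, the first displayed problem in the paper's proof can be a strict relaxation of the two-stage problem. For instance, with $\mathcal{Y} = \{0, 2\}$, deterministic constraint $y \leq 1$ and deterministic objective $-y$, the two-stage value is $0$ (only $y = 0$ is second-stage feasible), while the problem over $\text{conv}(\mathcal{Y}) \cap \{y : y \leq 1\} = [0,1]$ has value $-1$.

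Consequently, your argument establishes the sound form of the reduction: a static robust problem whose second-stage feasible region is $\text{conv}\,\mathcal{Y}_{\bm{W}}$ (a set that depends on $\bm{x}$), rather than the proposition's literal phrasing in which $\mathcal{Y}$ alone is convexified and the coupling constraints are kept. The two formulations coincide precisely when the distinction you raise disappears --- e.g., when $\mathcal{Y}$ is convex, or when the coupling constraints do not cut into $\text{conv}\,\mathcal{Y}$ --- and that is indeed the regime in which the paper later invokes this result (the relevant parts of Proposition~\ref{prop:tractability_two_stage_ro} assume $\mathcal{Y}$ convex), so nothing downstream is affected. But as a self-contained statement for nonconvex $\mathcal{Y}$ with active coupling constraints, your version is the one that withstands scrutiny; the paper's own convexification step is exactly the move you warn against.
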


\begin{proof}
Assume that $\Xi$ is convex and that only the objective function in problem~\eqref{eq:two_stage_ro_explicit} is uncertain; that is, $\bm{T} (\bm{\xi}) = \bm{T}$, $\bm{W} (\bm{\xi}) = \bm{W}$ and $\bm{h} (\bm{\xi}) = \bm{h}$ for all $\bm{\xi} \in \Xi$. Problem~\eqref{eq:two_stage_ro_explicit} then simplifies to
\begin{equation*}
\inf_{\bm{x} \in \mathcal{X}} \; \sup_{\bm{\xi} \in \Xi} \; \inf_{\bm{y} \in \text{conv}\, \mathcal{Y}}
\left\{ \bm{c}^\top \bm{x} + \bm{d} (\bm{\xi})^\top \bm{y} \, : \, \bm{T} \bm{x} + \bm{W} \bm{y} \leq \bm{h} \right\},
\end{equation*}
where the replacement of the second-stage feasible region $\mathcal{Y}$ with its convex hull, $\text{conv } \mathcal{Y}$, is justified since the inner minimization has a linear objective function. The classical minimax theorem now allows us to exchange the order of the inner two operators:
\begin{equation*}
\inf_{\substack{\bm{x} \in \mathcal{X}, \\ \bm{y} \in \text{conv}\, \mathcal{Y}}} \left\{ \sup_{\bm{\xi} \in \Xi}
\left\{ \bm{c}^\top \bm{x} + \bm{d} (\bm{\xi})^\top \bm{y} \right\} \, : \, \bm{T} \bm{x} + \bm{W} \bm{y} \leq \bm{h} \right\}.
\end{equation*}
This problem is readily recognized as a single-stage robust optimization problem.

We emphasize that the previous argument requires $\Xi$ to be convex. Indeed, we have
\begin{equation*}
-1
\; = \;
\sup_{\xi \in \{ -1, 1 \}} \; \inf_{y \in [-1, 1]} \, \xi y
\; \neq \;
\inf_{y \in [-1, 1]} \; \sup_{\xi \in \{ -1, 1 \}} \, \xi y
\; = \;
0,
\end{equation*}
and we cannot establish equivalence by replacing $\Xi$ with $\text{conv } \Xi = [-1, 1]$ in the second optimization problem either.
\end{proof}

A related result was established in~\cite[Theorem~2.1]{BTGGN04:adjustable}, where it is shown that the two-stage robust optimization problem~\eqref{eq:two_stage_ro_explicit} reduces to a single-stage robust optimization problem if $\mathcal{X}$, $\mathcal{Y}$ and $\Xi$ are convex and the uncertain parameters can be partitioned into subsets such that each constraint is only affected by the parameters of one such subset, and no two constraints are affected by parameters of the same subset.

\section{Analysis of the $K$-Adaptability Problem}

To analyze problem~\eqref{eq:k_adaptability}, we equivalently rewrite it as
\begin{equation}\label{eq:k_adaptability_explicit}
\begin{array}{r@{\quad}l@{\quad}r@{\; = \;}l}
\displaystyle \inf_{\substack{\bm{x} \in \mathcal{X}, \\ \bm{y} \in \mathcal{Y}^k}} \; \bm{c}^\top \bm{x} + \mathcal{Q} (\bm{x}, \bm{y})
& \displaystyle \text{with}
& \displaystyle \mathcal{Q} (\bm{x}, \bm{y}) & \displaystyle \sup_{\bm{\xi} \in \Xi} \; Q (\bm{x}, \bm{y}, \bm{\xi}), \\
& \displaystyle \text{where}
& \displaystyle Q (\bm{x}, \bm{y}, \bm{\xi}) & \displaystyle \inf_{k \in \mathcal{K}} \left\{ \bm{d} (\bm{\xi})^\top \bm{y}_k \, : \, \bm{T} (\bm{\xi}) \bm{x} + \bm{W} (\bm{\xi}) \bm{y}_k \leq \bm{h} (\bm{\xi}) \right\}
\end{array}
\tag{\ref{eq:k_adaptability}$'$}
\end{equation}
for $\mathcal{Q} : \mathcal{X} \times \mathcal{Y}^K \mapsto \mathbb{R} \cup \{ + \infty \}$ and $Q : \mathcal{X} \times \mathcal{Y}^K \times \Xi \mapsto \mathbb{R} \cup \{ + \infty \}$.

In analogy to Proposition~\ref{prop:continuity_two_stage_ro}, we first investigate whether~\eqref{eq:k_adaptability_explicit} attains its infima and supremum.

\begin{prop}[Continuity]\label{prop:continuity_k_adaptability}
Problem~\eqref{eq:k_adaptability_explicit} satisfies the following properties.
\begin{enumerate}
\item[(i)] The problem $\mathcal{Q} (\bm{x}, \bm{y})$ attains its supremum, if only the objective function or only the constraints are uncertain. Otherwise, $\mathcal{Q} (\bm{x}, \bm{y})$ does not necessarily attain its supremum, even if the constraint left-hand sides are not uncertain.
\item[(ii)] The problem~\eqref{eq:k_adaptability_explicit} attains its infimum, if it is feasible.
\end{enumerate}
\end{prop}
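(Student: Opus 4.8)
The plan is to treat the two parts separately, reusing the toolbox from the proof of Proposition~\ref{prop:continuity_two_stage_ro} but exploiting the fact that the inner minimization in \eqref{eq:k_adaptability_explicit} now ranges over the finite index set $\mathcal{K}$ rather than the continuum $\mathcal{Y}$.

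For the attainment claims in part~(i), I would first dispose of the case where only the objective is uncertain. Here $\bm{T}$, $\bm{W}$ and $\bm{h}$ are constant, so the set $\mathcal{K}_{\mathrm{feas}} = \{ k \in \mathcal{K} : \bm{T}\bm{x} + \bm{W}\bm{y}_k \leq \bm{h} \}$ of feasible policies does not depend on $\bm{\xi}$. If $\mathcal{K}_{\mathrm{feas}} = \emptyset$ then $Q(\bm{x},\bm{y},\bm{\xi}) \equiv +\infty$ and the supremum is attained at any $\bm{\xi} \in \Xi$; otherwise $Q(\bm{x},\bm{y},\cdot) = \min_{k \in \mathcal{K}_{\mathrm{feas}}} \bm{d}(\bm{\xi})^\top \bm{y}_k$ is a minimum of finitely many affine functions, hence continuous, so its supremum over the compact set $\Xi$ is attained. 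The case where only the constraints are uncertain requires a different idea, and I expect it to be the main obstacle: here $Q$ need not be upper semicontinuous (crossing a facet on which a low-objective policy becomes feasible produces a downward jump), so the usual ``upper semicontinuous on a compact set'' argument fails. The remedy I would use is that $\bm{d}$ is now deterministic, so the value of each policy is a constant $v_k = \bm{d}^\top \bm{y}_k$; consequently $Q(\bm{x},\bm{y},\cdot)$ takes values in the finite set $\{ v_1, \ldots, v_K \} \cup \{ +\infty \}$. The supremum of a function with finite range is the largest value it actually attains, and is therefore attained by definition.

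For the non-attainment statement I would exhibit an explicit instance with both the objective and the constraint right-hand side uncertain but the constraint left-hand side fixed. A one-dimensional example with $\Xi = [0,1]$, $K = 2$, no first-stage decision, policies $y_1 = 1$ and $y_2 = 0$, objective contribution $\xi y_k$, and the single constraint $-\tfrac{1}{2} y_k \leq \xi - \tfrac{1}{2}$ works: policy~$1$ is feasible for all $\xi$ while policy~$2$ is feasible only for $\xi \geq \tfrac{1}{2}$, so $Q = \xi$ on $[0,\tfrac{1}{2})$ and $Q = \min\{\xi, 0\} = 0$ on $[\tfrac{1}{2}, 1]$. The supremum equals $\tfrac{1}{2}$, is approached as $\xi \uparrow \tfrac{1}{2}$, but is never attained, and the left-hand side $-\tfrac{1}{2} y_k$ is indeed independent of $\xi$.

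For part~(ii) I would mimic the lower-semicontinuity argument from Proposition~\ref{prop:continuity_two_stage_ro}~(iii). For each $k \in \mathcal{K}$ define the extended-real-valued function equal to $\bm{d}(\bm{\xi})^\top \bm{y}_k$ when $\bm{y}_k \in \mathcal{Y}$ and $\bm{T}(\bm{\xi})\bm{x} + \bm{W}(\bm{\xi})\bm{y}_k \leq \bm{h}(\bm{\xi})$, and $+\infty$ otherwise; since the feasibility region is closed in $(\bm{x}, \bm{y}, \bm{\xi})$ and the objective is continuous, each such function is lower semicontinuous. The pointwise minimum over the finite set $\mathcal{K}$ preserves lower semicontinuity, because a finite union of closed sublevel sets is closed; this yields that $Q(\bm{x},\bm{y},\bm{\xi})$ is lower semicontinuous jointly in $(\bm{x},\bm{y},\bm{\xi})$. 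The supremum over $\bm{\xi} \in \Xi$ preserves lower semicontinuity by \cite[Proposition~1.26]{RW09:variational_analysis}, so $\mathcal{Q}(\bm{x},\bm{y})$ is lower semicontinuous. Adding the continuous term $\bm{c}^\top \bm{x}$ and invoking \cite[Theorem~1.9]{RW09:variational_analysis} over the compact set $\mathcal{X} \times \mathcal{Y}^K$ then shows that the infimum in \eqref{eq:k_adaptability_explicit} is attained whenever it is finite, i.e., whenever the problem is feasible.
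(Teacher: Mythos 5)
Your proof is correct, and two of its three components are essentially the paper's own arguments: in the objective-uncertainty case of part~(i) you, like the paper, observe that the set of feasible policies is independent of $\bm{\xi}$ and that $Q(\bm{x},\bm{y},\cdot)$ is then a minimum of finitely many affine functions, so its supremum over the compact set $\Xi$ is attained; and your part~(ii) reproduces the paper's lower-semicontinuity argument (policy-wise l.s.c.\ value functions, preservation of lower semicontinuity under finite minimization and under suprema, attainment of the infimum over the compact set $\mathcal{X}\times\mathcal{Y}^K$). Where you genuinely depart from the paper is the constraint-uncertainty case of part~(i). The paper argues constructively: writing $\Xi_k$ for the set of realizations under which policy $k$ is feasible, it either exhibits $\bm{\xi}\in\Xi\setminus\bigcup_{k\in\mathcal{K}}\Xi_k$ (value $+\infty$), or else selects a subset $\mathcal{K}^\star$ maximizing $\min_{k\in\mathcal{K}'}\bm{d}^\top\bm{y}_k$ over all $\mathcal{K}'\subseteq\mathcal{K}$ with $\bigcap_{k\in\mathcal{K}'}\Xi_k\neq\emptyset$, and asserts that the supremum is attained at any point of $\bigcap_{k\in\mathcal{K}^\star}\Xi_k$. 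Your argument---since $\bm{d}$ is deterministic, $Q(\bm{x},\bm{y},\cdot)$ has range contained in the finite set $\{\bm{d}^\top\bm{y}_1,\ldots,\bm{d}^\top\bm{y}_K,+\infty\}$, and a function with finite range attains its supremum---is more elementary, absorbs the subcase $\bigcup_{k\in\mathcal{K}}\Xi_k\neq\Xi$ without separate treatment, and is in fact more watertight: the paper's construction is inexact as stated, because a point of $\bigcap_{k\in\mathcal{K}^\star}\Xi_k$ may also lie in some $\Xi_j$ with $j\notin\mathcal{K}^\star$ and $\bm{d}^\top\bm{y}_j$ strictly smaller, so that $Q$ there falls short of $\min_{k\in\mathcal{K}^\star}\bm{d}^\top\bm{y}_k$. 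For instance, take $K=3$ with policy values $(0,2,1)$ and $\Xi_1=\Xi_2=[0,\tfrac{1}{2}]$, $\Xi_3=[\tfrac{1}{2},1]$ (realizable with only $\bm{W}$ uncertain: $\bm{y}_1=(0,1)$, $\bm{y}_2=(2,1)$, $\bm{y}_3=(1,-1)$, $\bm{d}=(1,0)$, single constraint $(2\xi-1)\,y_{k2}\leq 0$); then $\mathcal{K}^\star=\{2\}$, yet $Q\equiv 0$ on $[0,\tfrac{1}{2}]$ while $\sup_{\bm{\xi}\in\Xi}Q=1$ is attained only on $(\tfrac{1}{2},1]$, so repairing the paper's argument would require maximizing over exact feasibility patterns, i.e., subsets $\mathcal{K}'$ with $\bigcap_{k\in\mathcal{K}'}\Xi_k\not\subseteq\bigcup_{j\notin\mathcal{K}'}\Xi_j$. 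Your finite-range argument sidesteps this issue entirely. Finally, your non-attainment instance (objective $\xi y_k$, constraint $-\tfrac{1}{2}y_k\leq\xi-\tfrac{1}{2}$, $y_1=1$, $y_2=0$) differs from the paper's ($\xi-y_k$ subject to $y_k\leq\xi$) but serves identically: $Q(\xi)=\xi$ on $[0,\tfrac{1}{2})$ and $Q(\xi)=0$ on $[\tfrac{1}{2},1]$, so the supremum $\tfrac{1}{2}$ is approached but never attained, while the constraint left-hand side is deterministic as required.
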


\begin{proof}
In view of the first statement, assume that only the objective function in problem~\eqref{eq:k_adaptability_explicit} is uncertain; that is, $\bm{T} (\bm{\xi}) = \bm{T}$, $\bm{W} (\bm{\xi}) = \bm{W}$ and $\bm{h} (\bm{\xi}) = \bm{h}$ for all $\bm{\xi} \in \Xi$. Then the inner problem $Q (\bm{x}, \bm{y}, \bm{\xi})$ simplifies to $Q (\bm{x}, \bm{y}, \bm{\xi}) = \inf \{ \bm{y}_k^\top \bm{d} (\bm{\xi}) \, : \, k \in \mathcal{K}_{\bm{W}} \}$, where $\mathcal{K}_{\bm{W}} = \{ k \in \mathcal{K} \, : \, \bm{W} \bm{y}_k \leq \bm{h} - \bm{T} \bm{x} \}$. If $\mathcal{K}_{\bm{W}} = \emptyset$, then $Q (\bm{x}, \bm{y}, \bm{\xi}) = + \infty$ for all $\bm{\xi} \in \Xi$, and any $\bm{\xi} \in \Xi$ attains the supremum. Otherwise, $Q (\bm{x}, \bm{y}, \bm{\xi})$ is readily verified to be continuous in $\bm{\xi}$ for every fixed $\bm{x}$ and $\bm{y}$, and~\cite[Theorem~1.9]{RW09:variational_analysis} then implies that $\mathcal{Q} (\bm{x}, \bm{y})$ attains its supremum since $\Xi$ is a compact set.

Assume now that only the constraints in problem~\eqref{eq:k_adaptability_explicit} are uncertain. Then $Q (\bm{x}, \bm{y}, \bm{\xi})$ simplifies to $Q (\bm{x}, \bm{y}, \bm{\xi}) = \inf \{ \bm{d}^\top \bm{y}_k \, : \, k \in \mathcal{K}, \; \bm{\xi} \in \Xi_k \}$, where $\Xi_k = \{ \bm{\xi} \in \Xi \, : \, \bm{T} (\bm{\xi}) \bm{x} + \bm{W} (\bm{\xi}) \bm{y}_k \leq \bm{h} (\bm{\xi}) \}$. If $\bigcup_{k \in \mathcal{K}} \Xi_k \neq \Xi$, then $Q (\bm{x}, \bm{y}, \bm{\xi}) = + \infty$ for any $\bm{\xi} \in \Xi \setminus \bigcup_{k \in \mathcal{K}} \Xi_k$ and $\mathcal{Q} (\bm{x}, \bm{y})$ attains its supremum. Otherwise, $\bigcup_{k \in \mathcal{K}} \Xi_k = \Xi$, and the supremum in $\mathcal{Q} (\bm{x}, \bm{y})$ is attained by any $\bm{\xi} \in \bigcap_{k \in \mathcal{K}^\star} \Xi_k$, where
\begin{equation*}
\mathcal{K}^\star \in \mathop{\arg \max}_{\mathcal{K}' \subseteq \mathcal{K}}
\left\{
\min_{k \in \mathcal{K}'} \bm{d}^\top \bm{y}_k \, : \,
\bigcap_{k \in \mathcal{K}'} \Xi_k \neq \emptyset
\right\}.
\end{equation*}

Finally, assume that both the objective function and the constraint right-hand sides in problem~\eqref{eq:k_adaptability_explicit} are allowed to be uncertain, and consider the following instance of problem $\mathcal{Q} (\bm{x}, \bm{y})$:
\begin{equation*}
\sup_{\xi \in [0,1]} \; \inf_{k \in \{ 1, 2 \}} \left\{ \xi - y_k \, : \, y_k \leq \xi \right\}
\qquad \text{with } y_1 = 1 \text{ and } y_2 = 0.
\end{equation*}
The inner minimization problem is optimized by $k^\star = 1$ if $\xi = 1$ and $k^\star = 2$ otherwise. The supremum is $1$, and it is approached by the sequence of feasible solutions $\xi \uparrow 1$. Note, however, that the limit point $\xi^\star = 1$ of this sequence results in an objective function value of $0$.

As for the second statement, we first note that each extended real-valued function
\begin{equation*}
Q_k (\bm{x}, \bm{y}, \bm{\xi}) =
\begin{cases}
\bm{d} (\bm{\xi})^\top \bm{y}_k & \text{if } \bm{y}_k \in \mathcal{Y} \text{ and } \bm{T} (\bm{\xi}) \bm{x} + \bm{W} (\bm{\xi}) \bm{y}_k \leq \bm{h} (\bm{\xi}), \\
+ \infty & \text{otherwise},
\end{cases}
\qquad k \in \mathcal{K},
\end{equation*}
is lower semicontinuous in $(\bm{x}, \bm{y}, \bm{\xi})$ since the sets $\{ (\bm{x}, \bm{y}, \bm{\xi}) \in \mathbb{R}^{N_1} \times (\mathbb{R}^{N_2})^K \times \mathbb{R}^{N_p} \, : \, \bm{y}_k \in \mathcal{Y}, \; \bm{T} (\bm{\xi}) \bm{x} + \bm{W} (\bm{\xi}) \bm{y}_k \leq \bm{h} (\bm{\xi}) \}$ are closed. From~\cite[Proposition~1.26]{RW09:variational_analysis} we conclude that the lower semicontinuity is preserved by the partial minimization over $k$ and the partial maximization over $\bm{\xi}$. Due to~\cite[Theorem~1.9]{RW09:variational_analysis} the problem~\eqref{eq:k_adaptability_explicit} then attains its infimum whenever it is feasible.
\end{proof}

Similar to Proposition~\ref{prop:convexity_two_stage_ro}, we now consider the convexity properties of problem~\eqref{eq:k_adaptability_explicit}.

\begin{prop}[Convexity]\label{prop:convexity_k_adaptability}
Problem~\eqref{eq:k_adaptability_explicit} satisfies the following properties.
\begin{enumerate}
\item[(i)] The problem $\mathcal{Q} (\bm{x}, \bm{y})$ is convex, if $\Xi$ is convex and only the objective function is uncertain. Otherwise, $\mathcal{Q} (\bm{x}, \bm{y})$ is typically not convex, even if $\Xi$ is convex and only the constraint right-hand sides are uncertain.
\item[(ii)] Problem~\eqref{eq:k_adaptability_explicit} is typically not convex, even if $\mathcal{X}$ and $\mathcal{Y}$ are convex and $\Xi$ is a singleton.
\end{enumerate}
\end{prop}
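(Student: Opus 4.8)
The plan is to establish the positive claim in part~(i) by a concavity argument and to dispatch the three negative claims by explicit two-policy counterexamples, following the template of Proposition~\ref{prop:convexity_two_stage_ro}. For the positive part of~(i), I would specialize problem~\eqref{eq:k_adaptability_explicit} to the case $\bm{T}(\bm{\xi}) = \bm{T}$, $\bm{W}(\bm{\xi}) = \bm{W}$, $\bm{h}(\bm{\xi}) = \bm{h}$. The crucial observation is that feasibility of each candidate policy $\bm{y}_k$ is then independent of $\bm{\xi}$: writing $\mathcal{K}_{\bm{W}} = \{ k \in \mathcal{K} : \bm{W}\bm{y}_k \le \bm{h} - \bm{T}\bm{x} \}$, we have $Q(\bm{x}, \bm{y}, \bm{\xi}) = \min_{k \in \mathcal{K}_{\bm{W}}} \bm{d}(\bm{\xi})^\top \bm{y}_k$ for every fixed $(\bm{x}, \bm{y})$, with $Q \equiv +\infty$ in the degenerate case $\mathcal{K}_{\bm{W}} = \emptyset$. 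Since $\bm{d}$ is affine, each $\bm{d}(\bm{\xi})^\top \bm{y}_k$ is affine in $\bm{\xi}$, so $Q(\bm{x}, \bm{y}, \cdot)$ is a pointwise minimum of finitely many affine functions and hence concave in $\bm{\xi}$. Therefore $\mathcal{Q}(\bm{x}, \bm{y}) = \sup_{\bm{\xi} \in \Xi} Q(\bm{x}, \bm{y}, \bm{\xi})$ maximizes a concave function over the convex set $\Xi$ and is thus a convex optimization problem. This is more direct than the corresponding two-stage argument because the inner minimization ranges over the finite set $\mathcal{K}$ rather than over $\mathcal{Y}$.

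For the negative part of~(i), I would construct a two-policy instance with convex $\Xi$, deterministic objective, and uncertain right-hand sides whose value function fails concavity. The guiding principle is that when only the constraints are uncertain, $Q(\bm{x}, \bm{y}, \cdot)$ is piecewise constant in $\bm{\xi}$, since policy $k$ contributes its constant value $\bm{d}^\top \bm{y}_k$ precisely over its polyhedral feasibility region $\Xi_k = \{ \bm{\xi} \in \Xi : \bm{W}\bm{y}_k \le \bm{h}(\bm{\xi}) \}$. It therefore suffices to make a low-value policy feasible only on a central subinterval strictly contained in the feasibility region of a high-value policy, yielding a ``high--low--high'' profile. Concretely, over $\Xi = [0,1]$ with scalar objective $\bm{d} = 1$ I would fix $y_1 = 1$ and $y_2 = 0$ and choose (deterministic) $\bm{W}$ and affine $\bm{h}$ so that policy~$1$ is feasible on all of $[0,1]$ while policy~$2$ is feasible only on $[1/3, 2/3]$; then $Q(\xi) = 0$ on $[1/3, 2/3]$ and $Q(\xi) = 1$ elsewhere, which violates concavity at $\xi = 1/2$. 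Checking that these feasibility regions are realizable is a routine calculation.

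For part~(ii), I would take $\Xi = \{ \bar{\bm{\xi}} \}$ to reduce the evaluation to $\mathcal{Q}(\bm{x}, \bm{y}) = \min_{k \in \mathcal{K}} \{ \bm{d}(\bar{\bm{\xi}})^\top \bm{y}_k : \bm{T}(\bar{\bm{\xi}})\bm{x} + \bm{W}(\bar{\bm{\xi}})\bm{y}_k \le \bm{h}(\bar{\bm{\xi}}) \}$. Each term of this minimum is a convex extended-real-valued function of $(\bm{x}, \bm{y})$, but a pointwise minimum of convex functions is generally non-convex, and this selection step is exactly what destroys convexity. To make this explicit I would take $K = 2$, no first-stage decisions, $\mathcal{Y} = [-1, 1]$ (so that $\mathcal{X}$ and $\mathcal{Y}$ are convex) and no constraints (so that every policy is feasible), with $\bm{d} = 1$; the objective then reduces to $\min\{ y_1, y_2 \}$, which is concave and hence not convex (it fails the midpoint inequality between $(y_1, y_2) = (1, 0)$ and $(0, 1)$), so problem~\eqref{eq:k_adaptability_explicit} is not a convex optimization problem.

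I expect the main obstacle to be the negative part of~(i): one must respect the restriction that only the right-hand sides are uncertain---so that the objective stays deterministic and every $\Xi_k$ remains a polyhedron---while still forcing a genuine failure of concavity, which requires the sandwiched-interval construction above. By contrast, the positive part of~(i) and part~(ii) rest on the elementary facts that a finite minimum of affine functions is concave and that a finite minimum of convex functions need not be convex.
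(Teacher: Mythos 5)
Your proposal is correct and follows essentially the same route as the paper: the positive part of (i) is the paper's argument almost verbatim (under deterministic constraints, $Q(\bm{x},\bm{y},\cdot)$ is a finite minimum of affine functions of $\bm{\xi}$, hence concave, so $\mathcal{Q}(\bm{x},\bm{y})$ maximizes a concave function over the convex set $\Xi$), and your part (ii) example reduces, exactly as the paper's singleton-$\Xi$ instance does, to minimizing the non-convex function $\min\{y_1,y_2\}$ over $[-1,1]^2$. The only difference is cosmetic: for the negative part of (i) the paper uses the single-constraint instance $\sup_{\xi \in [-1,1]} \inf_{k} \{ y_k : y_k \geq \xi \}$ with $(y_1,y_2)=(1,0)$, whose value function is the non-concave step $\mathbb{I}[\xi > 0]$, whereas you realize a ``high--low--high'' profile with two constraints (e.g.\ $-y/3 \leq \xi - 1/3$ and $-y/3 \leq 2/3 - \xi$, which indeed make $y_2=0$ feasible exactly on $[1/3,2/3]$ and $y_1=1$ feasible on all of $[0,1]$); both constructions rest on the same observation that under right-hand-side uncertainty the value function is piecewise constant and need not be concave.
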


\begin{proof}
In view of the first statement, assume that $\Xi$ is convex and that only the objective function in problem~\eqref{eq:k_adaptability_explicit} is uncertain; that is, $\bm{T} (\bm{\xi}) = \bm{T}$, $\bm{W} (\bm{\xi}) = \bm{W}$ and $\bm{h} (\bm{\xi}) = \bm{h}$ for all $\bm{\xi} \in \Xi$. Then the inner problem $Q (\bm{x}, \bm{y}, \bm{\xi})$ simplifies to $Q (\bm{x}, \bm{y}, \bm{\xi}) = \inf \{ \bm{y}_k^\top \bm{d} (\bm{\xi}) \, : \, k \in \mathcal{K}_{\bm{W}} \}$, where $\mathcal{K}_{\bm{W}} = \{ k \in \mathcal{K} \, : \, \bm{W} \bm{y}_k \leq \bm{h} - \bm{T} \bm{x} \}$. Assume that $\mathcal{K}_{\bm{W}} \neq \emptyset$; the other case is trivial. Then $Q (\bm{x}, \bm{y}, \bm{\xi})$ is a piecewise-affine concave function in $\bm{\xi}$ for every fixed $\bm{x}$ and $\bm{y}$. We thus conclude that $\mathcal{Q} (\bm{x}, \bm{y})$ is a convex optimization problem as it maximizes a concave function over a convex set.

Assume now that the constraint right-hand sides in problem~\eqref{eq:k_adaptability_explicit} are allowed to be uncertain, and consider the following instance of problem $\mathcal{Q} (\bm{x}, \bm{y})$:
\begin{equation*}
\sup_{\xi \in [-1, 1]} \; \inf_{k \in \mathcal{K}} \left\{ y_k \, : \, y_k \geq \xi \right\}
\qquad
\text{with } y_1 = 1 \text{ and } y_2 = 0.
\end{equation*}
The inner minimization problem is optimized by $k^\star = 1$, if $\xi > 0$, and $k^\star = 2$, otherwise. The outer maximization problem thus maximizes the non-convex function $\mathbb{I}{[\xi > 0]}$ over the interval $[-1, 1]$, which amounts to a non-convex optimization problem.

In view of the second statement, consider the following problem instance:
\begin{equation*}
\inf_{y_1, y_2 \in [-1, 1]} \; \sup_{\xi \in \{1\}} \; \inf_{k \in \{ 1, 2 \}} \; \xi \, y_k
\end{equation*}
The problem attains the objective value $-1$ for $(y_1, y_2) \in \{ (-1, 1), \, (1, -1) \}$, but it attains the larger objective value of $0$ for $(y_1, y_2) = 1/2 \cdot (-1, 1) + 1/2 \cdot (1, -1) = (0, 0)$.
\end{proof}

In analogy to Proposition~\ref{prop:tractability_two_stage_ro}, we now investigate under which conditions problem~\eqref{eq:k_adaptability_explicit} is tractable.

\begin{prop}[Tractability]\label{prop:tractability_k_adaptability}
Problem~\eqref{eq:k_adaptability_explicit} satisfies the following properties.
\begin{enumerate}
\item[(i)] The problem $\mathcal{Q} (\bm{x}, \bm{y})$ can be solved in polynomial time, if $\Xi$ is convex and only the objective function is uncertain. Otherwise,  $\mathcal{Q} (\bm{x}, \bm{y})$ is strongly NP-hard, even if $\Xi$ is convex and only the constraint right-hand sides are uncertain.
\item[(ii)] The problem~\eqref{eq:k_adaptability_explicit} can be solved in polynomial time, if $\mathcal{X}$, $\mathcal{Y}$ and $\Xi$ are convex and only the objective function is uncertain. Otherwise, the problem is strongly NP-hard, even if $\mathcal{X}$, $\mathcal{Y}$ and $\Xi$ are convex and only the constraint right-hand sides are uncertain.
\end{enumerate}
\end{prop}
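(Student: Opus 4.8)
The plan is to mirror the structure of Proposition~\ref{prop:tractability_two_stage_ro}, using the convexity results of Proposition~\ref{prop:convexity_k_adaptability} for the tractable regimes and a reduction from the strongly NP-hard 0/1 Integer Programming (IP) feasibility problem for the hard regimes.

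\emph{Polynomial solvability.} For the first part of statement~(i), I would invoke Proposition~\ref{prop:convexity_k_adaptability}~(i): when $\Xi$ is convex and only the objective is uncertain, $Q(\bm x,\bm y,\bm\xi)=\min_{k\in\mathcal K_{\bm W}}\bm d(\bm\xi)^\top\bm y_k$ is concave and piecewise affine in $\bm\xi$, so $\mathcal Q(\bm x,\bm y)$ maximizes a concave function over the convex set $\Xi$. Introducing an epigraph variable $\tau$ turns this into maximizing $\tau$ subject to $\tau\le\bm d(\bm\xi)^\top\bm y_k$ for $k\in\mathcal K_{\bm W}$ and $\bm\xi\in\Xi$; since a bounded convex MILP-representable $\Xi$ is a polytope, this is a linear program and hence polynomially solvable. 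For the first part of statement~(ii) I would use a squeezing argument: the optimal values of the two-stage problem, the $K$-adaptability problem, and the $1$-adaptability (static) problem satisfy (two-stage) $\le$ ($K$-adapt) $\le$ ($1$-adapt), because adding policies only relaxes the inner minimization. By Proposition~\ref{prop:static_two_stage_ro}, under $\Xi$ convex and objective-only uncertainty the two-stage problem equals the static problem over $\mathrm{conv}\,\mathcal Y$; since $\mathcal Y$ is convex this coincides with the $1$-adaptability problem, so all three values agree. The common value is a single-stage robust problem with convex $\mathcal X,\mathcal Y,\Xi$ and uncertain objective, which is polynomially solvable by dualizing the inner supremum, cf.\ Proposition~\ref{prop:tractability_two_stage_ro}~(iii).

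\emph{Hardness of the evaluation problem (statement~(i), second part).} I would reduce from 0/1 IP feasibility. Place the IP constraints into the convex set $\Xi=\{\bm\xi\in[0,1]^{N_p}:\bm A\bm\xi\le\bm b\}$ and fix (with only the right-hand sides uncertain) $K=N_p+1$ policies: a ``good'' policy feasible on all of $\Xi$ with objective value $1$, and ``bad'' policies with objective value $0$, where the $i^{\text{th}}$ bad policy is feasible precisely on the slab $\{\bm\xi:\epsilon\le\xi_i\le1-\epsilon\}$. Taking $\bm y_k=\mathbf e_k$, $\bm d=\mathbf e_0$, and an affine right-hand side encoding the inequalities $\epsilon(\bm y_k)_i\le\xi_i$ and $\epsilon(\bm y_k)_i\le1-\xi_i$ realizes exactly these regions. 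Then $Q$ equals $0$ wherever some bad policy is feasible and $1$ only at points avoiding every slab, so $\mathcal Q=1$ iff some near-binary point lies in $\Xi$; choosing $\epsilon=1/(1+\max_r\lVert\bm a_r\rVert_1)$ (where $\bm a_r^\top$ is the $r^{\text{th}}$ row of $\bm A$) and rounding shows, using integrality of $\bm A,\bm b$, that such a point exists iff the IP is feasible. All data are polynomially bounded, yielding strong NP-hardness.

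\emph{Hardness of the full problem (statement~(ii), second part) and the main obstacle.} Here I would again reduce from 0/1 IP with $\mathcal X,\mathcal Y,\Xi$ convex and only right-hand sides uncertain. The step I expect to be hardest is that, unlike the two-stage case (where one merely appends a singleton first stage to the hard $\mathcal Q(\bm x)$ instance), the evaluation hardness of part~(i) \emph{cannot} be lifted by fixing the policies: all $K$ policies share the same constraint data, so no per-policy constraint can pin $\bm y_1,\dots,\bm y_K$ to distinct values, and a singleton $\mathcal Y$ would force them equal. Worse, the outer minimization may exploit fractional policies to cover $\Xi$ and drive the objective down, so the hardness must be encoded \emph{symmetrically}. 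My plan is to put the IP constraints into the convex first-stage set $\mathcal X$ and to design the worst-case recourse term $\sup_{\bm\xi}\min_k(\cdot)$ so that it reduces to a concave integrality penalty, of the form $\sum_i\min\{x_i,1-x_i\}$ on the first-stage vector, which vanishes exactly at binary points; minimizing such a concave penalty over the polytope $\mathcal X$ is NP-hard and attains zero iff the IP is feasible. The crux is verifying that no alternative, and in particular no fractional, policy assignment can achieve a smaller worst-case value than the intended slab arrangement, i.e.\ that the symmetric policy optimization genuinely realizes this penalty.
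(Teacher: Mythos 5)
Your treatment of three of the four claims is correct and essentially coincides with the paper's route: the epigraph LP for $\mathcal{Q}(\bm{x},\bm{y})$ under objective-only uncertainty is an inlined version of what the paper gets by citing~\cite[Observation~2]{HKW15:rip}; your slab-plus-good-policy construction for the hardness of the evaluation problem is the paper's reduction (the paper reduces from an \emph{approximate} 0/1 IP feasibility problem and cites~\cite[Lemma~2]{WKS14:DRCO} for its strong NP-hardness, whereas you inline the same rounding argument); and your sandwich argument (two-stage $\leq$ $K$-adaptable $\leq$ $1$-adaptable, with the two ends coinciding by minimax under convexity and objective-only uncertainty) is exactly how the paper proves the polynomial case of statement~(ii).

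The genuine gap is the hardness claim in statement~(ii). You correctly diagnose why the evaluation-problem hardness cannot be lifted by pinning the policies, but your proposed fix is not a proof: the assertion that the optimal symmetric policy arrangement realizes the concave penalty $\sum_i \min\{x_i, 1-x_i\}$ is precisely the step you defer, and nothing in the sketch excludes that fractional policy tuples attain a strictly smaller worst-case value---establishing that exclusion \emph{is} the proof, not a verification detail. The paper avoids this quantification problem altogether by making the reduction a pure feasibility question: it uses no first-stage decisions (the IP constraints go into $\Xi$, not $\mathcal{X}$), an identically zero second-stage objective, $\mathcal{Y} = \{\bm{y} \in \mathbb{R}^{N_p}_+ : \mathbf{e}^\top \bm{y} = \epsilon\}$ with $\epsilon < 1/2$, $K = N_p$, and constraints $\bm{y}_k \leq \bm{\xi} \leq \mathbf{e} - \bm{y}_k$, so the optimal value is either $0$ or $+\infty$. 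The key device is that \emph{every} admissible policy $\bm{y}_k$ has some strictly positive coordinate (since $\mathbf{e}^\top \bm{y}_k = \epsilon > 0$), hence is infeasible at every binary $\bm{\xi}^\star \in \Xi$; this discharges the universal quantifier over all policy tuples---the direction your construction cannot handle---uniformly and without any analysis of optimal or fractional arrangements, while the converse direction needs only the single witness $\bm{y}_k = \epsilon\,\mathbf{e}_k$, $k = 1,\ldots,N_p$. To close your argument you would either need to prove your penalty identity against all fractional policy choices, or switch to such a feasibility-based ($0$ versus $+\infty$) encoding in which $\mathcal{Y}$ itself forces every policy to be ``bad'' at binary points.
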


\begin{proof}
The first part of the first statement follows directly from \cite[Observation~2]{HKW15:rip}.

In view of the second part of the first statement, we consider the following variant of the IP feasibility problem: \\[-2mm]

\fbox{\parbox{15.5cm}{ {\centering \textsc{Approximate 0/1 Integer Programming Feasibility.}\\[-8mm]}
\vphantom{a}~~~\textbf{Instance.} Given are $\bm{A} \in \mathbb{Z}^{R \times N_p}$, $\bm{b} \in \mathbb{Z}^R$. \\
\vphantom{a}~~~\textbf{Question.} Is there $\bm{\xi} \in ([0, \epsilon) \cup (1 - \epsilon, 1])^{N_p}$, $\epsilon = \big( \min_r \sum_q | A_{rq}| \big)^{-1}$, such that $\bm{A} \bm{\xi} \leq \bm{b}$?
}} \\[4mm]
It follows from \cite[Lemma~2]{WKS14:DRCO} that the approximate IP feasibility problem is strongly NP-hard.
We claim that the approximate IP feasibility problem has an affirmative answer if and only if
\begin{equation}\label{eq:approx_ip_feasibility}
\mathcal{Q} (\bm{x}, \bm{y}) \; = \;
\sup_{\bm{\xi} \in \Xi} \; \inf_{k \in \mathcal{K}} \left\{ y_k^0 \, : \, \underline{\bm{y}}_k \leq \bm{\xi} \leq \overline{\bm{y}}_k \right\}
\; = \; 1,
\end{equation}
where $\Xi = \{ \bm{\xi} \in [0, 1]^{N_p} \, : \, \bm{A} \bm{\xi} \leq \bm{b} \}$ for $(\bm{A}, \bm{b})$ from the approximate IP feasibility instance, $K = N_p + 1$, $\bm{y}_k = (y_k^0, \underline{\bm{y}}_k, \overline{\bm{y}}_k) = (0, \epsilon \,\mathbf{e}_k, \mathbf{e} - \epsilon \,\mathbf{e}_k)$ for $k = 1, \ldots, N_p$ and $\bm{y}_{N_p+1} = (y_{N_p+1}^0, \underline{\bm{y}}_{N_p+1}, \overline{\bm{y}}_{N_p+1}) = (1, \bm{0}, \mathbf{e})$. Note that we do not specify the first-stage decision $\bm{x}$ as it is not required for the argument.

Assume first that the approximate IP feasibility problem has an affirmative answer, i.e., there is $\bm{\xi}^\star \in ([0, \epsilon) \cup (1 - \epsilon, 1])^{N_p}$ such that $\bm{A} \bm{\xi}^\star \leq \bm{b}$. Note that $\bm{\xi}^\star \in \Xi$, but for every $k = 1, \ldots, N_p$ we have $\bm{\xi}^\star \notin [\underline{\bm{y}}_k, \overline{\bm{y}}_k]$. Since $\bm{\xi}^\star \in [\bm{0}, \mathbf{e}]$, $\bm{y}_{N_p+1}$ is the only feasible candidate policy, and the optimal value of problem~\eqref{eq:approx_ip_feasibility} is indeed $1$.

Assume now that the optimal value of problem~\eqref{eq:approx_ip_feasibility} is $1$. In that case, there is $\bm{\xi}^\star \in \Xi$ such that $\bm{\xi}^\star \in [\bm{0}, \mathbf{e}] \setminus \bigcup_{k=1}^{N_p} [\epsilon \,\mathbf{e}_k, \mathbf{e} - \epsilon \,\mathbf{e}_k]$, i.e, $\bm{\xi}^\star \in ([0, \epsilon) \cup (1 - \epsilon, 1])^{N_p}$. Since $\bm{A} \bm{\xi}^\star \leq \bm{b}$ by construction of $\Xi$, we conclude that the approximate IP feasibility problem has an affirmative answer.

As for the first part of the second statement, assume first that $\mathcal{X}$, $\mathcal{Y}$ and $\Xi$ are convex and that only the objective function in problem~\eqref{eq:k_adaptability_explicit} is uncertain; that is, $\bm{T} (\bm{\xi}) = \bm{T}$, $\bm{W} (\bm{\xi}) = \bm{W}$ and $\bm{h} (\bm{\xi}) = \bm{h}$ for all $\bm{\xi} \in \Xi$. The classical minimax theorem then implies that
\begin{equation*}
\mspace{-25mu}
\inf_{\substack{\bm{x} \in \mathcal{X}, \\ \bm{y}_1 \in \mathcal{Y}}} \left\{ \sup_{\bm{\xi} \in \Xi} \left\{ \bm{c}^\top \bm{x} + \bm{d} (\bm{\xi})^\top \bm{y}_1 \right\} \, : \, \bm{T} \bm{x} + \bm{W} \bm{y}_1 \leq \bm{h} \right\}
\;\; = \;\;
\inf_{\bm{x} \in \mathcal{X}} \; \sup_{\bm{\xi} \in \Xi} \; \inf_{\bm{y} \in \mathcal{Y}}
\left\{ \bm{c}^\top \bm{x} + \bm{d} (\bm{\xi})^\top \bm{y} \, : \, \bm{T} \bm{x} + \bm{W} \bm{y} \leq \bm{h} \right\},
\end{equation*}
i.e., the $1$-adaptability problem achieves the same objective value as the two-stage robust optimization problem~\eqref{eq:two_stage_ro}. Since the $K$-adaptability problem is bounded from above by the $1$-adaptability problem and from below by the two-stage robust optimization problem, we thus conclude that, under the stated assumptions, the $1$-adaptability problem coincides with the $K$-adaptability problem. The statement now follows from the fact that we can reformulate the $1$-adaptability problem as a linear program by dualizing the inner maximization problem, see~\cite{BTEGN09:rob_opt}.
Note that the convexity of $\Xi$ is needed in this argument since
\begin{equation*}
-1
\; = \;
\max_{\xi \in \{ -1, 1 \}} \; \min_{y \in [-1, 1]} \, \xi y
\; \neq \;
\min_{y \in [-1, 1]} \; \max_{\xi \in \{ -1, 1 \}} \, \xi y
\; = \; 0.
\end{equation*}
A similar argument with $\Xi = [-1, 1]$ and $\mathcal{Y} = \{ -1, 1 \}$ shows that the convexity of $\mathcal{Y}$ is needed, too.

Finally, in view of the second part of the second statement, consider the following problem: 
\begin{equation}\label{eq:approx_ip_feasibility2}
\inf_{\bm{y} \in \mathcal{Y}^K} \sup_{\bm{\xi} \in \Xi} \; \inf_{k \in \mathcal{K}} \left\{ 0 \, : \, \bm{y}_k \leq \bm{\xi} \leq \mathbf{e} - \bm{y}_k \right\},
\end{equation}
where $K = N_p$, $\mathcal{Y} = \{ \bm{y} \in \mathbb{R}^{N_p}_+ \, : \, \mathbf{e}^\top \bm{y} = \epsilon \}$, $\epsilon < 1/2$, and $\Xi = \{ \bm{\xi} \in [0, 1]^{N_p} \, : \, \bm{A} \bm{\xi} \leq \bm{b} \}$ for $(\bm{A}, \bm{b})$ from an approximate IP feasibility instance. We claim that the optimal value of this problem is $+ \infty$, i.e., there is $\bm{\xi} \in \Xi$ for which no decision $\bm{y} \in \mathcal{Y}^K$ is feasible in the second stage, if and only if the approximate IP feasibility instance has an affirmative answer.

Assume first that the approximate IP feasibility problem has an affirmative answer. It then follows from \cite[Lemma~2]{WKS14:DRCO} that the \emph{exact} IP feasibility problem also has an affirmative answer; that is, there is $\bm{\xi}^\star \in \{ 0, 1 \}^{N_p}$ such that $\bm{A} \bm{\xi}^\star \leq \bm{b}$. Fix any feasible candidate policy $\bm{y}_k \in \mathcal{Y}$. By construction, there is $i \in \{ 1, \ldots, N_p \}$ such that $y_{ki} > 0$. Thus, the candidate policy is feasible in the second stage under the parameter realization $\bm{\xi}^\star$ only if $\xi^\star_i \in [y_{ki}, 1 - y_{ki}] \subseteq (0, 1)$, which is not the case since $\xi_i^\star \in \{ 0, 1 \}$. We thus conclude that the optimal value of problem~\eqref{eq:approx_ip_feasibility2} is indeed $+ \infty$ whenever the approximate IP feasibility problem has an affirmative answer.

Assume now that the optimal value of problem~\eqref{eq:approx_ip_feasibility2} is $+ \infty$ and consider the $K$ candidate policies $\bm{y}_k = \epsilon \,\mathbf{e}_k$, $k = 1, \ldots, N_p$. Since $\bm{y} = (\bm{y}_1, \ldots, \bm{y}_K) \in \mathcal{Y}^K$ and problem~\eqref{eq:approx_ip_feasibility2} evaluates to $+ \infty$ under this choice of $\bm{y}$, there must be $\bm{\xi}^\star \in \Xi$ such that $\bm{\xi}^\star \in [\bm{0}, \mathbf{e}] \setminus \bigcup_{k=1}^{N_p} [\epsilon \,\mathbf{e}_k, \mathbf{e} - \epsilon \mathbf{e}_k]$; that is, $\bm{\xi}^\star \in ([0, \epsilon) \cup (1 - \epsilon, 1])^{N_p}$. Since $\bm{A} \bm{\xi}^\star \leq \bm{b}$ by construction of $\Xi$, we conclude that the approximate IP feasibility problem has an affirmative answer.
\end{proof}

We note that $\mathcal{Q} (\bm{x}, \bm{y})$ can be solved in polynomial time if $\Xi$ is convex and the number of policies $K$ is fixed (even when the objective function, the constraint coefficients and the right-hand sides are uncertain), see \cite[Corollary~1]{HKW15:rip}.
The NP-hardness of $\mathcal{Q} (\bm{x}, \bm{y})$ has previously been established under the more restrictive assumption that both the objective function and the constraint right-hand sides in problem~\eqref{eq:k_adaptability_explicit} are uncertain, see \cite[Theorem~3]{HKW15:rip}.
For the special case where $K = 2$, the $K$-adaptability problem with objective and constraint uncertainty can be solved in polynomial time if any of $N_p$, $\max \{ N_1, N_2 \}$ or $L$ is fixed \cite[Proposition~5]{BC10:finite_adaptability}, while the problem becomes NP-hard otherwise~\cite[Proposition~6]{BC10:finite_adaptability}. Proposition~\ref{prop:tractability_k_adaptability}~(ii) provides an alternative proof of the NP-hardness of problem~\eqref{eq:k_adaptability_explicit}, which facilitates a direct comparison to the two-stage robust optimization problem~\eqref{eq:two_stage_ro_explicit}, see Proposition~\ref{prop:tractability_two_stage_ro}~(iii).

We also note that the $K$-adaptability problem~\eqref{eq:k_adaptability_explicit} simplifies in the absence of first-stage decisions $\bm{x}$. For this case, it has been shown in \cite[Theorem~2]{BK16:min_max_max_MP} that the problem can be solved in polynomial time, if only the objective function is uncertain, the deterministic second-stage problem is polynomial time solvable, the uncertainty set $\Xi$ has a tractable representation, and if any number of policies $K > N_2$ is acceptable. The same problem becomes NP-hard, however, when the number of policies $K$ is fixed \cite[Corollary~3]{BK16:min_max_max_MP} or when the uncertainty sets are discrete \cite[Corollaries~1--4]{BK16:min_max_max_discrete}.

Similar to Proposition~\ref{prop:static_two_stage_ro}, we next consider when the $K$-adaptability problem~\eqref{eq:k_adaptability_explicit} reduces to a single-stage robust optimization problem.

\begin{prop}[Reduction to Static Problem]\label{prop:static_k_adaptability}
The problem~\eqref{eq:k_adaptability_explicit} reduces to a static robust optimization problem where $\mathcal{Y}$ is replaced with its convex hull, if $\Xi$ is convex, only the objective function is uncertain and $K > \min \{ N_2, N_p \}$, irrespective of $\mathcal{X}$ and $\mathcal{Y}$.
\end{prop}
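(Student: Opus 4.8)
The plan is to sandwich the $K$-adaptability problem~\eqref{eq:k_adaptability_explicit} between the two-stage problem~\eqref{eq:two_stage_ro_explicit} and the static problem obtained by replacing $\mathcal{Y}$ with $\text{conv}\,\mathcal{Y}$, and to show that under the stated hypotheses all three coincide. By construction the $K$-adaptability problem is bounded below by~\eqref{eq:two_stage_ro_explicit}, and Proposition~\ref{prop:static_two_stage_ro} already identifies the latter with the static problem over $\text{conv}\,\mathcal{Y}$ (using that $\Xi$ is convex and only the objective is uncertain). Hence it suffices to prove the reverse inequality, namely that for every fixed $\bm{x}\in\mathcal{X}$ the inner worst-case value of~\eqref{eq:k_adaptability_explicit} does not exceed $\mathcal{Q}(\bm{x})$, the worst-case value of the two-stage problem. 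Fixing $\bm{x}$ and writing $\mathcal{Y}_{\bm{W}}=\{\bm{y}\in\mathcal{Y}:\bm{W}\bm{y}\le\bm{h}-\bm{T}\bm{x}\}$, the case $\mathcal{Y}_{\bm{W}}=\emptyset$ is trivial (both values are $+\infty$), so I assume $\mathcal{Y}_{\bm{W}}\neq\emptyset$ and set $v^\star=\mathcal{Q}(\bm{x})=\sup_{\bm{\xi}\in\Xi}\inf_{\bm{y}\in\mathcal{Y}_{\bm{W}}}\bm{d}(\bm{\xi})^\top\bm{y}$.

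First I would regularise the inner problem. Since $\bm{d}(\bm{\xi})^\top\bm{y}$ is linear in $\bm{y}$, the infimum over $\mathcal{Y}_{\bm{W}}$ equals the infimum over the compact convex set $\text{conv}\,\mathcal{Y}_{\bm{W}}$; as $\bm{d}$ is affine, $\bm{d}(\bm{\xi})^\top\bm{y}$ is bilinear in $(\bm{\xi},\bm{y})$, so the classical minimax theorem applies over the compact convex sets $\Xi$ and $\text{conv}\,\mathcal{Y}_{\bm{W}}$ and yields $v^\star=\min_{\bm{y}\in\text{conv}\,\mathcal{Y}_{\bm{W}}}\max_{\bm{\xi}\in\Xi}\bm{d}(\bm{\xi})^\top\bm{y}$, with the minimum attained at some $\bar{\bm{y}}$. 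The goal is then to exhibit at most $\min\{N_2,N_p\}+1\le K$ policies in $\mathcal{Y}_{\bm{W}}$ whose pointwise minimum objective never exceeds $v^\star$, that is, policies $\bm{y}_1,\dots,\bm{y}_K\in\mathcal{Y}_{\bm{W}}$ with $\min_k\bm{d}(\bm{\xi})^\top\bm{y}_k\le v^\star$ for every $\bm{\xi}\in\Xi$; padding with repeated policies then reaches exactly $K$.

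I would split according to whether $N_2$ or $N_p$ is smaller. If $N_2\le N_p$, I apply Carath\'eodory's theorem in $\mathbb{R}^{N_2}$ to the minimiser $\bar{\bm{y}}\in\text{conv}\,\mathcal{Y}_{\bm{W}}$, writing $\bar{\bm{y}}=\sum_{i=1}^{N_2+1}\lambda_i\bm{y}_i$ as a convex combination of points $\bm{y}_i\in\mathcal{Y}_{\bm{W}}$ (in particular feasible). For any $\bm{\xi}$ then $\min_i\bm{d}(\bm{\xi})^\top\bm{y}_i\le\sum_i\lambda_i\bm{d}(\bm{\xi})^\top\bm{y}_i=\bm{d}(\bm{\xi})^\top\bar{\bm{y}}\le\max_{\bm{\xi}'}\bm{d}(\bm{\xi}')^\top\bar{\bm{y}}=v^\star$, giving $N_2+1$ admissible policies. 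If instead $N_p\le N_2$, I use a covering argument in the parameter space. For $\bm{y}\in\mathcal{Y}_{\bm{W}}$ set $R_{\bm{y}}=\{\bm{\xi}\in\Xi:\bm{d}(\bm{\xi})^\top\bm{y}\le v^\star\}$, the intersection of $\Xi$ with a halfspace, hence convex. Since $v^\star=\sup_{\bm{\xi}}\inf_{\bm{y}}$, every $\bm{\xi}$ admits some $\bm{y}\in\mathcal{Y}_{\bm{W}}$ with $\bm{d}(\bm{\xi})^\top\bm{y}\le v^\star$; as $\mathcal{Y}$ is bounded and MILP-representable, $\text{conv}\,\mathcal{Y}_{\bm{W}}$ is a polytope, so this infimum is attained at one of its finitely many extreme points, and the finitely many regions $R_{\bm{y}}$ indexed by these extreme points already cover $\Xi$. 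Equivalently, the finite family of convex sets $\{\Xi\setminus R_{\bm{y}}\}$ has empty intersection, so Helly's theorem in $\mathbb{R}^{N_p}$ furnishes a subfamily of size at most $N_p+1$ with empty intersection, i.e.\ $N_p+1$ regions $R_{\bm{y}_1},\dots,R_{\bm{y}_{N_p+1}}$ that cover $\Xi$; the corresponding extreme points lie in $\mathcal{Y}_{\bm{W}}$ and again satisfy $\min_k\bm{d}(\bm{\xi})^\top\bm{y}_k\le v^\star$ for all $\bm{\xi}$.

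In both cases I obtain $K$ feasible policies certifying that the inner value of~\eqref{eq:k_adaptability_explicit} at $\bm{x}$ is at most $v^\star=\mathcal{Q}(\bm{x})$; taking the infimum over $\bm{x}\in\mathcal{X}$ and combining with the lower bound and Proposition~\ref{prop:static_two_stage_ro} closes the chain of equalities. I expect the main obstacle to be the $N_p$-branch: one must justify reducing the covering family to the finitely many extreme-point regions before Helly's theorem can be invoked (Helly requires a finite family, or a compactness hypothesis), and one must check throughout that the extracted policies are genuine elements of $\mathcal{Y}_{\bm{W}}$ rather than of its convex hull, so that they are admissible in the $K$-adaptability problem. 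A secondary point to verify carefully is the applicability of the minimax theorem and the attainment of the minimiser $\bar{\bm{y}}$, which rely on the compactness of $\mathcal{Y}$ and the convexity of $\Xi$.
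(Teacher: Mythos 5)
Your proof is correct, but it takes a genuinely different route from the paper's. The paper transforms problem~\eqref{eq:k_adaptability_explicit} directly: it relaxes the discrete choice of a policy $k\in\mathcal{K}$ to convex weights over the feasible policies, swaps $\sup_{\bm{\xi}}$ and the weight minimization by the minimax theorem, and then applies Carath\'eodory's theorem twice---in the decision space $\mathbb{R}^{N_2}$ for $K\geq N_2+1$, and to the objective terms $\bm{d}(\bm{\xi})^\top\bm{y}_k$ for $K\geq N_p+1$---to land on the static problem over the convex hull (the passage from~\eqref{eq:pre_caratheodory} to~\eqref{eq:post_caratheodory}). You instead sandwich: problem~\eqref{eq:k_adaptability_explicit} always upper-bounds problem~\eqref{eq:two_stage_ro_explicit}, Proposition~\ref{prop:static_two_stage_ro} identifies the latter with the static problem, and your real work is the reverse inequality, proved by exhibiting, for each fixed $\bm{x}$, at most $\min\{N_2,N_p\}+1\leq K$ feasible policies whose pointwise minimum never exceeds $\mathcal{Q}(\bm{x})$. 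Your $N_2$ branch (Carath\'eodory applied to the minimax-optimal $\bar{\bm{y}}\in\text{conv}\,\mathcal{Y}_{\bm{W}}$) is essentially a pointwise version of the paper's first Carath\'eodory step; your $N_p$ branch, however, replaces the paper's second Carath\'eodory argument with Helly's theorem, thinning the finite cover of $\Xi$ by the convex regions $R_{\bm{y}}$ of the extreme points of $\text{conv}\,\mathcal{Y}_{\bm{W}}$ down to $N_p+1$ regions. This buys two things: decomposing inside $\text{conv}\,\mathcal{Y}_{\bm{W}}$ (rather than $\text{conv}\,\mathcal{Y}$ intersected with the constraints) keeps every extracted policy feasible automatically, a point the paper's reformulation handles only implicitly through the definition of $\Delta(\bm{x},\bm{y})$; and Helly in $\mathbb{R}^{N_p}$ yields the bound $N_p+1$ cleanly, whereas a naive Carath\'eodory count in the $(N_p+1)$-dimensional space of affine functions of $\bm{\xi}$ would give only $N_p+2$. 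What the paper's route buys in exchange is self-containedness---a single chain of problem equivalences that never invokes Proposition~\ref{prop:static_two_stage_ro}---and it comes packaged with the counterexamples showing that convexity of $\Xi$ and the threshold $K>\min\{N_2,N_p\}$ cannot be dropped, which your proposal does not address (nor does the implication as stated require it). The caveats you flag---finiteness of the covering family before invoking Helly, membership of the extreme points of $\text{conv}\,\mathcal{Y}_{\bm{W}}$ in $\mathcal{Y}_{\bm{W}}$, and applicability of the minimax theorem---are all settled exactly as you indicate, using that $\mathcal{Y}_{\bm{W}}$ is a compact finite union of polytopes, so no genuine gap remains.
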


\begin{proof}
Assume that $\Xi$ is convex and that only the objective function in problem~\eqref{eq:k_adaptability_explicit} is uncertain; that is, $\bm{T} (\bm{\xi}) = \bm{T}$, $\bm{W} (\bm{\xi}) = \bm{W}$ and $\bm{h} (\bm{\xi}) = \bm{h}$ for all $\bm{\xi} \in \Xi$. Problem~\eqref{eq:k_adaptability_explicit} then simplifies to
\begin{equation*}
\inf_{\substack{\bm{x} \in \mathcal{X}, \\ \bm{y} \in \mathcal{Y}^K}} \; \sup_{\bm{\xi} \in \Xi} \; \inf_{k \in \mathcal{K}} \left\{ \bm{c}^\top \bm{x} + \bm{d} (\bm{\xi})^\top \bm{y}_k \, : \, \bm{T} \bm{x} + \bm{W} \bm{y}_k \leq \bm{h} \right\}.
\end{equation*}
The inner discrete minimization in this problem can be replaced by a continuous minimization over all convex combinations $\bm{\lambda} \in \Delta (\bm{x}, \bm{y}) = \left\{ \bm{\lambda} \in \mathbb{R}^K_+ \, : \, \mathbf{e}^\top \bm{\lambda} = 1, \;\; \lambda_k = 0 \text{ if } \bm{T} \bm{x} + \bm{W} \bm{y}_k \not \leq \bm{h} \right\}$:
\begin{equation*}
\inf_{\substack{\bm{x} \in \mathcal{X}, \\ \bm{y} \in \mathcal{Y}^K}} \; \sup_{\bm{\xi} \in \Xi} \; \inf_{\bm{\lambda} \in \Delta (\bm{x}, \bm{y})} \left\{ \bm{c}^\top \bm{x} + \sum_{k \in \mathcal{K}} \lambda_k \bm{d} (\bm{\xi})^\top \bm{y}_k \right\}
\end{equation*}
The classical minimax theorem then allows us to exchange the order of the inner two operators:
\begin{equation}\label{eq:pre_caratheodory}
\inf_{\substack{\bm{x} \in \mathcal{X}, \\ \bm{y} \in \mathcal{Y}^K}} \; \inf_{\bm{\lambda} \in \Delta (\bm{x}, \bm{y})} \; \sup_{\bm{\xi} \in \Xi} \left\{ \bm{c}^\top \bm{x} + \bm{d} (\bm{\xi})^\top \left[ \sum_{k \in \mathcal{K}} \lambda_k \bm{y}_k \right] \right\}
\end{equation}
If $K \geq N_2 + 1$, then we can use Carath{\'e}odory's theorem to replace the convex combinations of $K$ candidate decisions $\bm{y}_k \in \mathcal{Y}$ with a single decision from the convex hull, $\text{conv } \mathcal{Y}$, resulting in
\begin{equation}\label{eq:post_caratheodory}
\inf_{\substack{\bm{x} \in \mathcal{X}, \\ \bm{y} \in \text{conv}\, \mathcal{Y}}} \left\{ \sup_{\bm{\xi} \in \Xi} \left\{ \bm{c}^\top \bm{x} + \bm{d} (\bm{\xi})^\top \bm{y} \right\} \, : \, \bm{T} \bm{x} + \bm{W} \bm{y} \leq \bm{h} \right\},
\end{equation}
which is readily recognized as a single-stage robust optimization problem. Similarly, problem~\eqref{eq:pre_caratheodory} can be rewritten as
\begin{equation*}
\inf_{\substack{\bm{x} \in \mathcal{X}, \\ \bm{y} \in \mathcal{Y}^K}} \; \inf_{\bm{\lambda} \in \Delta (\bm{x}, \bm{y})} \; \sup_{\bm{\xi} \in \Xi} \left\{ \bm{c}^\top \bm{x} + \sum_{k \in \mathcal{K}} \lambda_k \left[ \bm{d} (\bm{\xi})^\top \bm{y}_k \right] \right\},
\end{equation*}
and if $K \geq N_p + 1$, then we can use Carath{\'e}odory's theorem to replace the convex combinations of $K$ candidate decisions $\bm{d} (\bm{\xi})^\top \bm{y}_k \in \left\{ \bm{d} (\bm{\xi})^\top \bm{y} : \bm{y} \in \mathcal{Y} \right\}$ with a single decision from the convex hull of their domain, $\text{conv} \left\{ \bm{d} (\bm{\xi})^\top \bm{y} : \bm{y} \in \mathcal{Y} \right\} = \left\{ \bm{d} (\bm{\xi})^\top \bm{y} : \bm{y} \in \text{conv } \mathcal{Y} \right\}$, resulting again in the single-stage robust optimization problem~\eqref{eq:post_caratheodory}.

As in Proposition~\ref{prop:static_two_stage_ro}, the previous arguments require $\Xi$ to be convex. Indeed, we have
\begin{equation*}
-1
\; = \;
\sup_{\xi \in \{ -1, 1 \}} \; \inf_{k \in \{ 1, 2 \}} \, \xi y_k
\; \neq \;
\inf_{y \in [-1, 1]} \; \sup_{\xi \in \{ -1, 1 \}} \, \xi y
\; = \;
0
\qquad
\text{with } y_1 = -1 \text{ and } y_2 = 1,
\end{equation*}
and we cannot establish equivalence by replacing $\Xi$ with $\text{conv } \Xi = [-1, 1]$ in the second optimization problem either. To see that the number of policies $K$ must exceed $\min \{ N_2, N_p \}$ in general in the previous arguments, we compare the two problems
\begin{equation*}
\inf_{\bm{y} \in (\{ -1, 1 \}^{N_2})^K} \; \sup_{\bm{\xi} \in [-1, 1]^{N_p}} \; \inf_{k \in \mathcal{K}} \, \xi_1 y_{k1}
\qquad \text{and} \qquad
\inf_{\bm{y} \in [-1, 1]^{N_2}} \; \sup_{\bm{\xi} \in [-1, 1]^{N_p}} \, \xi_1 y_1.
\end{equation*}
For every $N_2, N_p \geq 1$, the problem on the right attains its optimal value $0$ at any $\bm{y} \in [-1, 1]^{N_2}$ with $y_1 = 0$. Similarly, for every $N_2, N_p \geq 1$, the problem on the left attains an optimal value of $0$, if $K \geq 2$, and an optimal value of $1$, if $K = 1$. We thus verify that $K > \min \{ N_2, N_p \}$ is required for the optimal values to coincide by choosing $(N_2, N_p) \in \{ (1, 2), (2, 1) \}$.
\end{proof}

We close this section with several special cases where the optimal value of the $K$-adaptability problem~\eqref{eq:k_adaptability_explicit} coincides with the optimal value of the two-stage robust optimization problem~\eqref{eq:two_stage_ro_explicit}.

\begin{prop}[Optimality]\label{prop:optimality_k_adaptability}
The optimal values of problems~\eqref{eq:two_stage_ro_explicit} and~\eqref{eq:k_adaptability_explicit} coincide if \emph{(i)} $\mathcal{Y}$ and $\Xi$ are convex and only the objective function is uncertain, irrespective of $\mathcal{X}$ and $K$, or if \emph{(ii)} $\Xi$ is convex, only the objective function is uncertain and $K > \min \{ N_2, N_p \}$, irrespective of $\mathcal{X}$ and $\mathcal{Y}$, or if \emph{(iii)} $\mathcal{Y}$ has a finite cardinality and $K \geq | \mathcal{Y} |$, irrespective of $\mathcal{X}$ and $\Xi$. Otherwise, their optimal values may differ for any finite $K$, even if $\mathcal{X}$, $\mathcal{Y}$ and $\Xi$ are convex and only the constraint right-hand sides are uncertain.	
\end{prop}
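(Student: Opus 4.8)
The plan is to obtain the three sufficiency conditions from the static reductions already established in Propositions~\ref{prop:static_two_stage_ro} and~\ref{prop:static_k_adaptability}, and to settle the negative claim with a single explicit counterexample. Throughout I would use the elementary sandwich relation that~\eqref{eq:two_stage_ro_explicit} lower-bounds~\eqref{eq:k_adaptability_explicit}, which in turn is upper-bounded by the $1$-adaptability problem (obtained by setting all $K$ candidate policies equal). For case~(i), when $\mathcal{Y}$ is convex and only the objective is uncertain, Proposition~\ref{prop:static_two_stage_ro} shows that~\eqref{eq:two_stage_ro_explicit} coincides with the static problem over $\text{conv}\,\mathcal{Y}=\mathcal{Y}$, which is precisely the $1$-adaptability problem; the sandwich then forces equality with~\eqref{eq:k_adaptability_explicit} for every $K$ and every $\mathcal{X}$. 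For case~(ii), both problems reduce to the \emph{same} static problem: Proposition~\ref{prop:static_two_stage_ro} reduces~\eqref{eq:two_stage_ro_explicit} and Proposition~\ref{prop:static_k_adaptability} reduces~\eqref{eq:k_adaptability_explicit} (using $K>\min\{N_2,N_p\}$) to the identical single-stage robust problem over $\text{conv}\,\mathcal{Y}$, so their optimal values agree.

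For case~(iii) I would give a direct construction. Writing $\mathcal{Y}=\{\bm{y}^1,\dots,\bm{y}^m\}$ with $m=|\mathcal{Y}|\le K$, I assign the first $m$ candidate policies to the distinct points of $\mathcal{Y}$ and fix the remaining policies arbitrarily. For this choice the inner minimization over $k\in\mathcal{K}$ reproduces the inner minimization over $\bm{y}\in\mathcal{Y}$ pointwise in $\bm{\xi}$, so that $\mathcal{Q}(\bm{x},\bm{y})=\mathcal{Q}(\bm{x})$ for every $\bm{x}$; hence the optimal value of~\eqref{eq:k_adaptability_explicit} is at most that of~\eqref{eq:two_stage_ro_explicit}, and the sandwich yields equality.

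The substantive part, and the step I expect to be the main obstacle, is exhibiting a single instance in which the gap persists for \emph{every} finite $K$ under only--right-hand-side uncertainty with all sets convex. I propose the one-dimensional instance
\begin{equation*}
\sup_{\xi\in[0,1]}\ \inf_{\bm{y}\in[0,1]^2}\ \bigl\{\, y_1+y_2 \ :\ y_1\ge\xi,\ y_2\ge 1-\xi \,\bigr\},
\end{equation*}
with $\mathcal{X}$ trivial, $\mathcal{Y}=[0,1]^2$ and $\Xi=[0,1]$ all convex and only the constraint right-hand sides uncertain. For the two-stage problem the adaptive optimum is $\bm{y}^\star(\xi)=(\xi,\,1-\xi)$, which attains second-stage value $1$ for every $\xi$ and hence optimal value $1$. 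For the $K$-adaptability problem I would observe that a constant policy $(a,b)\in[0,1]^2$ is feasible exactly for $\xi\in[\,1-b,\,a\,]$ and attains the constant objective $a+b$, equal to the length of that interval plus one. Thus choosing $K$ policies and minimizing the worst-case objective is equivalent to covering $[0,1]$ by $K$ intervals so as to minimize $\max_{\xi}\min_{k:\,\xi\in I_k}\operatorname{length}(I_k)$.

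To finish, I would show that this covering value is exactly $1/K$: the uniform tiling $I_k=[(k-1)/K,\,k/K]$ attains it, while the matching lower bound follows from a measure argument, since the intervals of length strictly below $1/K$ (at most $K$ of them) have total length below $1$ and therefore cannot cover $[0,1]$, leaving a point whose every covering interval has length at least $1/K$. Consequently the $K$-adaptability optimum equals $1+1/K>1$ for every finite $K$, which establishes the strict gap. The only delicate points are verifying that the induced feasibility intervals take the claimed form and that the measure argument correctly handles points covered simultaneously by short and long intervals; both are routine once the interval picture is in place.
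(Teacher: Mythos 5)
Your proof is correct. On the three sufficiency conditions you travel essentially the paper's own route: its proof of \emph{(i)} invokes the minimax identity established in the proof of Proposition~\ref{prop:tractability_k_adaptability}~(ii) --- precisely the content of the static reduction you cite --- together with the same sandwich $\eqref{eq:two_stage_ro_explicit} \leq \eqref{eq:k_adaptability_explicit} \leq 1\text{-adaptability}$; its proof of \emph{(ii)} combines Proposition~\ref{prop:static_k_adaptability} with one further application of the minimax theorem and the sandwich, where your observation that Propositions~\ref{prop:static_two_stage_ro} and~\ref{prop:static_k_adaptability} reduce both problems to the \emph{identical} static problem over $\text{conv}\,\mathcal{Y}$ is marginally more direct; and \emph{(iii)} is dismissed in the paper as clear, which your explicit enumeration construction simply makes precise. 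The genuinely different ingredient is your counterexample for the final claim. The paper uses a Chebyshev-approximation instance, $\sup_{\xi \in [-1,1]} \inf \left\{ \tau \, : \, \tau \geq \xi - y, \; \tau \geq y - \xi, \; \tau \in \mathbb{R}, \, y \in [-1,1] \right\}$, whose two-stage value is $0$ while any $K$ constant policies yield value at least $\max_{\xi} \min_{k} |\xi - y_k| > 0$; positivity there is immediate (finitely many points cannot be at distance zero from every $\xi$), so no covering or measure argument is needed. Your instance exploits the same underlying phenomenon --- finitely many constant policies cannot track a continuum of realizations once the constraints depend on $\bm{\xi}$ --- but requires the feasibility-interval characterization plus the measure-theoretic lower bound; in return it delivers the exact optimal value $1 + 1/K$ rather than mere positivity of the gap, and it keeps $\mathcal{Y} = [0,1]^2$ compact, whereas the paper's example formally takes $\tau \in \mathbb{R}$ (at odds with the standing boundedness assumption on $\mathcal{Y}$, though trivially repaired by capping $\tau$). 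Your two ``delicate points'' do go through: feasibility of the policy $(a,b)$ is exactly the condition $1-b \leq \xi \leq a$, and any point not covered by an interval of length below $1/K$ is either covered only by intervals of length at least $1/K$ or uncovered altogether (value $+\infty$), so the supremum is at least $1 + 1/K$ in either case.
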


\begin{proof}
The fact that the two optimal values coincide under the first set of conditions follows from the proof of Proposition~\ref{prop:tractability_k_adaptability}~(ii), where we have shown that, under the stated assumptions, the optimal values of the $1$-adaptability problem and the two-stage robust optimization problem~\eqref{eq:two_stage_ro_explicit} coincide. We have also shown there that the convexity of $\mathcal{Y}$ and $\Xi$ is crucial for the proof to hold.

The fact that the two optimal values coincide under the second set of conditions follows from Proposition~\ref{prop:static_k_adaptability}, which shows that, under the stated assumptions, the $K$-adaptability problem~\eqref{eq:k_adaptability_explicit} is equivalent to
\begin{equation*}
\inf_{\substack{\bm{x} \in \mathcal{X}, \\ \bm{y} \in \text{conv} \mathcal{Y}}} \left\{ \sup_{\bm{\xi} \in \Xi} \left\{ \bm{c}^\top \bm{x} + \bm{d} (\bm{\xi})^\top \bm{y} \right\} \, : \, \bm{T} \bm{x} + \bm{W} \bm{y} \leq \bm{h} \right\}.
\end{equation*}
The classical minimax theorem, which is applicable since $\text{conv} \, \mathcal{Y}$ and $\Xi$ are convex, then implies that this problem is equivalent to
\begin{equation*}
\inf_{\bm{x} \in \mathcal{X}} \; \sup_{\bm{\xi} \in \Xi} \; \inf_{\bm{y} \in \text{conv} \mathcal{Y}} \left\{ \bm{c}^\top \bm{x} + \bm{d} (\bm{\xi})^\top \bm{y} \, : \, \bm{T} \bm{x} + \bm{W} \bm{y} \leq \bm{h} \right\},
\end{equation*}
which provides a lower bound to the two-stage robust optimization problem~\eqref{eq:two_stage_ro_explicit} since $\text{conv } \mathcal{Y} \supseteq \mathcal{Y}$. On the other hand, we know that the $K$-adaptability problem~\eqref{eq:k_adaptability_explicit} by construction bounds~\eqref{eq:two_stage_ro_explicit} from above. We thus conclude that the optimal values of both problems must coincide.

In view of the third set of conditions, it is clear that $K \geq | \mathcal{Y} |$ policies are sufficient for the optimal values of problems~\eqref{eq:two_stage_ro_explicit} and~\eqref{eq:k_adaptability_explicit} to coincide. Moreover, \cite[Theorem~4]{HKW15:rip} presents a problem where the optimal values of problems~\eqref{eq:two_stage_ro_explicit} and~\eqref{eq:k_adaptability_explicit} differ for every $K < | \mathcal{Y} |$.

As for the second part of the statement, consider the problem
\begin{equation*}
\sup_{\xi \in [-1, 1]} \; \inf_{\substack{\tau \in \mathbb{R}, \\ y \in [-1, 1]}} \left\{ \tau \, : \, \tau \geq \xi - y, \;\; \tau \geq y - \xi \right\}.
\end{equation*}
The optimal second-stage decision to this two-stage robust optimization problem satisfies $\tau (\xi) = 0$ and $y (\xi) = \xi$, and any $\xi \in [-1, 1]$ attains the optimal objective value $0$. Consider now the $K$-adaptability problem
\begin{equation*}
\inf_{\substack{\bm{\tau} \in \mathbb{R}^K, \\ \bm{y} \in [-1, 1]^K}} \; \sup_{\xi \in [-1, 1]} \;  \inf_{k \in \mathcal{K}} \left\{ \tau_k \, : \, \tau_k \geq \xi - y_k, \;\; \tau_k \geq y_k - \xi \right\}.
\end{equation*}
By construction, the objective value of this problem evaluates to at least $\max_{\xi \in [-1, 1]} \; \min_{k \in \mathcal{K}} \, | \xi - y_k | > 0$ for any finite $K$ and any feasible solution $(\bm{\tau}, \bm{y}) \in \mathbb{R}^K \times [-1, 1]^K$.
\end{proof}

The validity of the first part of the statement under the second set of conditions in Proposition~\ref{prop:optimality_k_adaptability} was established for the subclass of purely binary $K$-adaptability problems in~\cite[Theorem~1]{HKW15:rip}. We also mention that the optimal value of the $K$-adaptability problem~\eqref{eq:k_adaptability_explicit} approaches the optimal value of the two-stage robust optimization problem~\eqref{eq:two_stage_ro_explicit} if $K \rightarrow \infty$ and a continuity assumption is satisfied, see \cite[Proposition 1]{BC10:finite_adaptability}.

\end{appendices}

\end{document}